\newtheorem{definition}{Definition}[section]
\newtheorem{theorem}[definition]{Theorem}
\newtheorem{lemma}[definition]{Lemma}
\newtheorem{corollary}[definition]{Corollary}
\newtheorem{remark}[definition]{Remark}
\newtheorem{example}[definition]{Example}
\newtheorem{note}[definition]{Note}
\newtheorem{proposition}[definition]{Proposition}
\begin{document} 

\title{\bf
Circular bidiagonal pairs
}
\author{
Paul Terwilliger and Arjana \v{Z}itnik}
\date{}

\maketitle
\begin{abstract} A square matrix is said to be circular bidiagonal whenever (i) each nonzero entry is on the 
diagonal, or the subdiagonal, or in the top-right corner; (ii) each subdiagonal entry is nonzero, and the entry in the top-right corner is nonzero.
Let $\mathbb F$ denote a field, and let $V$ denote a nonzero finite-dimensional vector space over $\mathbb F$.
We consider  an ordered pair of $\mathbb F$-linear maps $A: V \to V$ and $A^*: V \to V$
that satisfy the following two conditions:
\begin{enumerate}
\item[$\bullet$] there exists a basis for $V$ with respect to which the matrix representing $A$ is circular bidiagonal and the matrix representing $A^*$ is diagonal;
\item[$\bullet$] there exists a basis for $V$ with respect to which the matrix representing $A^*$ is circular bidiagonal and the matrix representing $A$ is diagonal.
\end{enumerate}
We call such a pair a circular bidiagonal pair on $V$. 
We classify the  circular bidiagonal pairs up to affine equivalence. There are two infinite families of solutions, which we describe in detail.
\bigskip

\noindent
{\bf Keywords}.  Bidiagonal pair; Hessenberg pair; Leonard pair; tridiagonal pair.
\hfil\break
\noindent {\bf 2020 Mathematics Subject Classification}.
Primary: 17B37;
Secondary: 15A21.
 \end{abstract}


\section{Introduction}
In a celebrated paper \cite{qracah}, Richard Askey and James Wilson introduced the $q$-Racah family of orthogonal polynomials.
In \cite{leonard}, Doug Leonard showed that the $q$-Racah polynomials are the most general orthogonal polynomials that
have orthogonal polynomial duals. In \cite[Theorem~5.1]{bi}, Eiichi Bannai and Tatsuro Ito gave a comprehensive version of Leonard's theorem.
In an effort to clarify and simplify the Leonard theorem, in \cite{2lintrans} the first author introduced the concept of a Leonard pair. Roughly speaking, a Leonard pair consists of two diagonalizable linear maps
on a nonzero finite-dimensional vector space, that each act on an eigenbasis of the other one in an irreducible tridiagonal fashion.
In \cite[Definition~1.4]{2lintrans} there appears an ``oriented''  version of a Leonard pair, called a Leonard system. In \cite[Theorem~1.9]{2lintrans} 
 the Leonard systems are classified up to isomorphism.
The article \cite{notes} contains a modern treatment of this classification, along with a detailed account of the history.
By \cite[Theorem~1.12]{2lintrans} a Leonard pair satisfies two polynomial relations called the tridiagonal relations. 
Some notable papers about Leonard pairs are \cite{ter24, terCanForm, qrac, ter2005, ter2005b, madrid}.
\medskip

\noindent  In \cite{TD00} Tatsuro Ito, Kenichiro Tanabe, and the first author  introduced the concept of a tridiagonal pair as 
a generalization of a Leonard pair.
The concept of a tridiagonal system was also introduced. In \cite[Corollary~18.1]{INT} the tridiagonal systems over an algebraically closed field  are classified up to isomorphism.
In \cite[Section~1.4]{augmented} it is shown how a tridiagonal system induces a tensor product factorization of the underlying vector space.
In \cite{uqsl2hat1, uqsl2hat2} the tridiagonal pairs are related to some finite-dimensional irreducible modules for $U_q(\widehat{\mathfrak{sl}_2})$.
It is shown in \cite[Theorem~10.1]{TD00} that every tridiagonal pair satisfies the tridiagonal relations. 
Some notable papers about tridiagonal pairs are \cite{bockting1, bockting, bocktingQexp, bocktingTer, current,shape, qtet, nom1}.
\medskip

\noindent Over the past 20 years, there appeared in the literature some variations on the Leonard pair and tridiagonal pair concepts. In the next few
paragraphs, we summarize these variations.\medskip

\noindent In \cite{godjali1} Ali Godjali introduced the concept of a Hessenberg pair as a generalization of a tridiagonal pair.
He showed in \cite[Corollary~1.9]{godjali1} that every Hessenberg pair induces a split decomposition of the underlying vector space.
In \cite{godjali2} Godjali considers a special case of Hessenberg pair, called a TH pair. He defines a TH system, and classifies these up to isomorphism \cite[Theorem~6.3]{godjali2}.
In \cite[Section~18]{godjali3} the TH systems are characterized in terms of West/South Vandermonde matrices.
\medskip

\noindent In \cite{funkNeub1} Darren Funk-Neubauer introduced the concept of a bidiagonal pair as a variation on a tridiagonal pair.
 In \cite[Theorem~5.1]{funkNeub1} the bidiagonal pairs are classified up to isomorphism. In 
 \cite[Theorems~5.10, 5.11]{funkNeub1}
  this  classification is interpreted using the equitable presentations of $\mathfrak{sl}_2$ and $U_q(\mathfrak{sl}_2)$.
In \cite{funkNeub2} Funk-Neubauer introduces the concept of a bidiagonal triple. In \cite[Theorem~4.1]{funkNeub2} he shows how every bidiagonal pair extends to a bidiagonal triple. In \cite[Theorem~4.3]{funkNeub2} the bidiagonal triples are classified up to isomorphism.
See \cite{funkNeub3} for related work.
\medskip

\noindent In \cite{vu} Pascal Baseilhac, Azat Gainutdinov, and Thao Vu introduced the concept of a cyclic tridiagonal pair, as a generalization of a tridiagonal pair.
They used cyclic tridiagonal pairs  to study a higher-order generalization of the Onsager algebra.
In \cite[Appendix~A]{vu} some examples of cyclic tridiagonal pairs are given.
It remains an open problem to classify the cyclic tridiagonal pairs up to isomorphism.
\medskip

\noindent In \cite{jhl} Jae-ho Lee introduced the concept of a circular Hessenberg pair. This is a special case of a TH pair, and also a  special case of a cyclic tridiagonal pair.
In \cite[Theorem~5.6]{jhl} the circular Hessenberg pairs are classified  under the assumption that the pair satisfies the tridiagonal relations.
The classification yields four infinite families of solutions \cite[Examples~5.1--5.4]{jhl}.
\medskip

\noindent In the present paper, we introduce the concept of a  circular bidiagonal pair. This is a variation on a bidiagonal pair,
 and a special case of a circular Hessenberg pair. The reason we focus on this special case, is that it affords a classification without
 assuming in advance 
 that the tridiagonal relations are satisfied.
We will display two infinite families of circular bidiagonal pairs. We will introduce the notion of affine equivalence.
Our main result is that every circular bidiagonal pair
is affine equivalent to a member of one of the two families. In the next section, we formally define a circular bidiagonal pair and give
a detailed statement of our results.

\section{Definitions and statement of results}

\noindent In this section, we introduce the concept of a circular bidiagonal pair. To define the concept, we first explain what it means for a square matrix to be circular bidiagonal. The following matrices are
circular bidiagonal:

\begin{align*}
\begin{pmatrix} 3&0&0&1 \\ 1&4&0&0 \\ 0&-1&1&0 \\ 0&0&-1&2 
\end{pmatrix}, \qquad \quad 
\begin{pmatrix} 2&0&0&-1 \\ -1&3&0&0 \\ 0&1&0&0 \\ 0&0&-1&-1 
\end{pmatrix}, \qquad \quad
\begin{pmatrix} 0&0&0&1 \\ 1&0&0&0 \\ 0&1&0&0 \\ 0&0&1&0 
\end{pmatrix}.
\end{align*}

\noindent Circular bidiagonal means (i) each nonzero entry is on the 
diagonal, or the subdiagonal, or in the top-right corner; (ii) each subdiagonal entry is nonzero, and the entry in the top-right corner is nonzero.
\medskip

\noindent  Next, we define a circular bidiagonal pair. For the rest of this paper, $\mathbb F$ denotes a field.

\begin{definition} \label{def:cbp} \rm Let $V$ denote a nonzero vector space over $\mathbb F$ with finite dimension.
By a {\it circular bidiagonal pair} on $V$, we mean an ordered pair of $\mathbb F$-linear maps $A: V \to V$ and $A^*: V \to V$
that satisfy the following two conditions:
\begin{enumerate}
\item[\rm (i)] there exists a basis for $V$ with respect to which the matrix representing $A$ is circular bidiagonal and the matrix representing $A^*$ is diagonal;
\item[\rm (ii)] there exists a basis for $V$ with respect to which the matrix representing $A^*$ is circular bidiagonal and the matrix representing $A$ is diagonal.
\end{enumerate}
\end{definition}
\begin{definition}\rm The circular bidiagonal pair in Definition \ref{def:cbp} is said to be {\it over $\mathbb F$}.
\end{definition}

\begin{definition} \label{def:dual} 
\rm Referring to Definition \ref{def:cbp}, assume that $A, A^*$ is a circular bidiagonal pair on $V$. Then the pair $A^*, A$ is
a circular bidiagonal pair on $V$, called the {\it dual} of $A, A^*$.
\end{definition}

\noindent Next, we give some examples of  circular bidiagonal pairs. Our first example is elementary. Let $V$ denote a vector space over $\mathbb F$ that has dimension one.
Then any ordered pair of $\mathbb F$-linear maps  $A: V \to V$ and $A^*: V \to V$ is a circular bidiagonal pair on $V$. 
\medskip

\noindent
Our next example is more substantial. Consider the vector space $V=\mathbb F^5$ (column vectors). Assume that
$q \in \mathbb F$ is a primitive $5^{\rm th}$ root of unity.
Consider the matrices
\begin{align*}
A= \begin{pmatrix} 0&0&0&0&1 \\ 1&0&0&0&0 \\ 0&1&0&0&0 \\ 0&0&1&0&0 \\ 0&0&0&1&0
\end{pmatrix},
\qquad \qquad A^*= {\rm diag}(1,q,q^2,q^3,q^4). 
\end{align*}
These matrices satisfy 
\begin{align*}
A^5=I, \qquad \qquad (A^*)^5=I, \qquad \qquad A^*A=q A A^*,
\end{align*}
where $I$ denotes the identity matrix.
We claim that the pair $A, A^*$ acts on $V$ as a circular bidiagonal pair. To see this, we check that $A, A^*$
satisfy the conditions in Definition \ref{def:cbp}. The matrix $A$ is circular bidiagonal and the matrix $A^*$ is diagonal. Therefore, condition (i) in Definition \ref{def:cbp} is satisfied by the
basis for $V$ consisting of the columns of $I$. Define a matrix
\begin{align*}
P = \begin{pmatrix} 1&1&1&1&1 \\ 1&q&q^2&q^3&q^4 \\ 1&q^2&q^4&q^6&q^8 \\ 1&q^3&q^6&q^9&q^{12}\\ 1&q^4&q^8&q^{12}&q^{16}
\end{pmatrix}.
\end{align*}
The matrix $P$ is Vandermonde, and hence invertible. One checks that 
$A^* P = P A$. In this equation, take the transpose of each side to obtain $PA^*= A^{-1} P$. Rearranging this equation, we obtain  $A P =  P (A^*)^{-1}$.
These results show that condition (ii) of Definition \ref{def:cbp} is satisfied by the basis for $V$ consisting of the columns of $P$.
We have shown that the pair $A, A^*$ acts on $V$ as a circular bidiagonal pair.
\medskip


\noindent The previous circular bidiagonal pair is a member of an infinite family of circular bidiagonal pairs. Before describing  this family, we bring in some notation.
For the rest of this paper,  every vector space and algebra mentioned is understood to be over $\mathbb F$.
Pick an integer $d\geq 1$. Let ${\rm Mat}_{d+1}(\mathbb F)$ denote the algebra consisting of the $d+1$ by $d+1$ matrices that have all entries in $\mathbb F$.
We index the rows and columns by $0,1,2,\ldots, d$. Let $\mathbb F^{d+1}$ denote the vector space consisting of the column vectors that have $d+1$ coordinates
and all entries in $\mathbb F$. We index the coordinates by $0,1,2,\ldots, d$. Note that ${\rm Mat}_{d+1}(\mathbb F)$ acts on $\mathbb F^{d+1}$ by left multiplication.
Let $I \in {\rm Mat}_{d+1}(\mathbb F)$ denote the identity matrix.


\begin{lemma} \label{lem:qn1} \rm Pick an integer $d\geq 1$, and consider the vector space $V=\mathbb F^{d+1}$. 
 Assume that $q \in \mathbb F$ is a primitive $n^{\rm th}$ root of unity, where $n=d+1$. Define matrices $A, A^* \in {\rm Mat}_{d+1} (\mathbb F)$ as follows.
We have $A_{0,d}=1$, and $A_{i,i-1}=1$ for $1 \leq i \leq d$. All other entries of $A$ are zero.
The matrix $A^*$ is diagonal, with $A^*_{i,i}=q^i$ for $0 \leq i \leq d$. Then the pair $A, A^*$ acts on $V$ as a circular bidiagonal pair. Moreover 
\begin{align}
\label{eq:3r}
A^n =I, \qquad \qquad (A^*)^n=I, \qquad \qquad A^*A=qAA^*.
\end{align}
\end{lemma}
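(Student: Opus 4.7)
The plan is a direct matrix calculation, verifying the two conditions of Definition~\ref{def:cbp} and then the three identities in \eqref{eq:3r}. First, condition (i) holds for the standard basis of $V=\mathbb F^{d+1}$ essentially by inspection: by construction $A$ is circular bidiagonal (its only nonzero entries are the subdiagonal ones $A_{i,i-1}=1$ and the corner $A_{0,d}=1$), and $A^*$ is diagonal.

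For condition (ii), I would introduce the Vandermonde matrix $P\in{\rm Mat}_{d+1}(\mathbb F)$ defined by $P_{ij}=q^{ij}$ for $0\leq i,j\leq d$. Since $q$ is a primitive $n$th root of unity with $n=d+1$, the scalars $1,q,\ldots,q^d$ are distinct, so $P$ is invertible. The goal is to establish two intertwining identities, $A^*P=PA$ and $AP=P(A^*)^{-1}$, by computing matrix entries directly. For the first, one has $(A^*P)_{ij}=q^i\,q^{ij}=q^{i(j+1)}$; for $0\leq j\leq d-1$ this equals $P_{i,j+1}=(PA)_{ij}$ using $A_{j+1,j}=1$, while for $j=d$ the scalar $q^{i(d+1)}=q^{in}=1$ matches $P_{i,0}\cdot A_{0,d}=(PA)_{i,d}$. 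The second identity is analogous: $(AP)_{ij}$ equals $P_{i-1,j}=q^{(i-1)j}$ when $1\leq i\leq d$ and equals $P_{d,j}=q^{dj}=q^{-j}$ when $i=0$ (again using $q^n=1$); in both cases it agrees with $P_{ij}\cdot q^{-j}=(P(A^*)^{-1})_{ij}$. Conjugating by $P$ then gives $P^{-1}A^*P=A$ (circular bidiagonal) and $P^{-1}AP=(A^*)^{-1}$ (diagonal), so the columns of $P$ witness condition (ii).

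For the relations in \eqref{eq:3r}: the identity $A^n=I$ is immediate because $A$ is the permutation matrix of the $n$-cycle, and $(A^*)^n=I$ follows from $A^*$ being diagonal with $n$th-root-of-unity entries. The commutation relation $A^*A=qAA^*$ reduces to the pointwise assertion $q^iA_{ij}=q\cdot q^jA_{ij}$, i.e.\ $q^i=q^{j+1}$ at every position $(i,j)$ where $A_{ij}\neq 0$: at the subdiagonal positions $(j+1,j)$ this is trivial, and at the corner $(0,d)$ it becomes $1=q^{d+1}=q^n$, which holds.

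The only real subtlety is bookkeeping for the ``wrap-around'' — the corner entry $A_{0,d}=1$ — which in every identity is reconciled with the subdiagonal pattern precisely by the primitivity relation $q^n=1$. Once that recurring case analysis is organized, nothing else is delicate; I anticipate the hardest step will simply be writing the verifications of $A^*P=PA$ and $AP=P(A^*)^{-1}$ cleanly so that the corner $j=d$ (resp.\ $i=0$) case is visibly subsumed by the generic one.
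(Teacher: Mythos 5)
Your proposal is correct and follows exactly the paper's argument: the same Vandermonde matrix $P$ with $P_{ij}=q^{ij}$, the same intertwining identities $A^*P=PA$ and $AP=P(A^*)^{-1}$, and direct verification of \eqref{eq:3r}. You merely supply the entry-by-entry details that the paper leaves as "one checks."
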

\begin{proof} The relations \eqref{eq:3r} are readily checked.
Define a matrix $P \in {\rm Mat}_{d+1}(\mathbb F)$ that has $(i,j)$-entry $q^{ij}$ for $0 \leq i, j\leq d$. The matrix $P$ is Vandermonde, and hence
invertible. One checks that $A^*P=PA$ and $AP=P (A^*)^{-1}$. Consequently, the pair $A, A^*$ acts on $V$ as a circular bidiagonal pair. 
\end{proof}
\begin{note}\rm The relation on the right in \eqref{eq:3r} is a defining relation for the quantum torus algebra; see for example \cite{gupta}.
\end{note} 
\noindent For the next example, we return to the vector space $V=\mathbb F^5$. Assume that $q \in \mathbb F$ is a primitive $5^{\rm th}$ root of unity.
Pick  $\varepsilon \in \mathbb F$ that is not among $1,q,q^2,q^3,q^4$.
Consider the matrices
\begin{align*}
A= \begin{pmatrix} \varepsilon &0&0&0&1-\varepsilon \\ 1-q^{-1} \varepsilon&q^{-1}\varepsilon&0&0&0 \\ 0&1-q^{-2} \varepsilon& q^{-2}\varepsilon &0&0 \\ 0&0&1-q^{-3} \varepsilon&
q^{-3} \varepsilon&0 \\ 0&0&0&1-q^{-4} \varepsilon& q^{-4}\varepsilon
\end{pmatrix}, \qquad  A^*= {\rm diag}(1,q,q^2, q^3, q^4).
\end{align*}
One checks that 
\begin{align*}
A^5 = I, \qquad \qquad (A^*)^5 = I, \qquad \qquad 
\frac{q A A^* - A^* A}{q-1} = \varepsilon I.
\end{align*}
We will show that the pair $A, A^*$ acts on $V$ as a circular bidiagonal pair.  This is a special case of the following result.

\begin{lemma} \label{ex:ddq} \rm Pick an integer $d\geq 1$, and consider the vector space $V=\mathbb F^{d+1}$. 
 Assume that $q \in \mathbb F$ is a primitive $n^{\rm th}$ root of unity, where $n=d+1$. 
Pick  $\varepsilon \in \mathbb F$ that is not among $1, q,q^2,\ldots, q^d$.
 Define a matrix $A =A(q, \varepsilon)$ in $ {\rm Mat}_{d+1} (\mathbb F)$ as follows.
We have $A_{i,i}=q^{-i}\varepsilon$ for $0 \leq i \leq d$. 
We have $A_{0,d}=1-\varepsilon$, and    $A_{i,i-1}=1-q^{-i}\varepsilon$ for $1 \leq i \leq d$. All other entries of $A$ are zero.
We define a diagonal matrix  $A^*= A^*(q)$ in ${\rm Mat}_{d+1}(\mathbb F)$ with $A^*_{i,i}= q^i$ for $0 \leq i \leq d$.
Then the pair $A, A^*$ acts on $V$ as a circular bidiagonal pair. Moreover 
\begin{align}\label{eq:AAsRel}
A^n = I, \qquad \qquad (A^*)^n = I, \qquad \qquad \frac{qAA^*-A^*A}{q-1} = \varepsilon I.
\end{align}
\end{lemma}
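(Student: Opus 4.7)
The plan is first to verify the three identities \eqref{eq:AAsRel} by direct computation. The commutation relation $\tfrac{qAA^*-A^*A}{q-1}=\varepsilon I$ is a routine entrywise check, exploiting that $A^*$ is diagonal. The identity $(A^*)^n = I$ is immediate from $(q^i)^n = 1$. For $A^n = I$ one traces how $A$ cyclically permutes the standard basis (up to multiplicative factors), invoking $\prod_{k=0}^{d}(1 - q^{-k}\varepsilon) = 1 - \varepsilon^n$. With these identities secured, condition (i) of Definition \ref{def:cbp} holds with respect to the standard basis of $\mathbb F^{d+1}$: $A^*$ is diagonal by construction, and $A$ is circular bidiagonal because its subdiagonal entries $1 - q^{-i}\varepsilon$ and its top-right entry $1 - \varepsilon$ are nonzero precisely by the hypothesis $\varepsilon \notin \{1, q, \ldots, q^d\}$.

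The substantive work is verifying condition (ii). I first diagonalize $A$. Solving $Av = \mu v$ row by row gives the recursion $v^{(i)} = \tfrac{1-q^{-i}\varepsilon}{\mu-q^{-i}\varepsilon}\, v^{(i-1)}$ for $1 \le i \le d$, together with a closure condition from row $0$ that, after iteration, reads $(\mu - \varepsilon) \prod_{k=1}^{d}(\mu - q^{-k}\varepsilon) = (1-\varepsilon) \prod_{k=1}^{d}(1-q^{-k}\varepsilon)$. Since $q$ is primitive of order $n$, the values $\{q^{-k}\varepsilon : 0 \le k \le d\}$ are exactly the $n$ roots of $X^n = \varepsilon^n$, so the two sides collapse to $\mu^n - \varepsilon^n$ and $1 - \varepsilon^n$ respectively, forcing $\mu^n = 1$. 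No denominator in the recursion vanishes (since $\varepsilon^n \ne 1$), so each of the $n$ distinct $n$-th roots of unity yields a genuine eigenvector. Let $P$ be the matrix whose $j$-th column is the eigenvector of $A$ with eigenvalue $\mu_j := q^{-j}$, and set $\Lambda := \mathrm{diag}(1, q^{-1}, \ldots, q^{-d})$, so $P^{-1}AP = \Lambda$.

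Now set $Q := P^{-1}A^*P$. Conjugating $A^*A = qAA^* - (q-1)\varepsilon I$ by $P$ gives $Q\Lambda = q\Lambda Q - (q-1)\varepsilon I$, which entrywise reads $(q\Lambda_{ii} - \Lambda_{jj})Q_{ij} = (q-1)\varepsilon\, \delta_{ij}$. Hence $Q_{ii} = q^i\varepsilon$, and for $i \ne j$ the entry $Q_{ij}$ vanishes unless $\Lambda_{jj} = q\Lambda_{ii}$, i.e., unless $j \equiv i-1 \pmod n$. The ordering $\mu_j = q^{-j}$ is chosen precisely so that this places the off-diagonal nonzero entries on the subdiagonal and in the top-right corner (rather than on the superdiagonal and in the bottom-left), giving $Q$ the sparsity pattern of a circular bidiagonal matrix. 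To finish, I rule out vanishing of any such entry: if $Q_{i,i-1} = 0$ for some $1 \le i \le d$, then column $i-1$ of $Q$ has its only nonzero entry on the diagonal, so $e_{i-1}$ is an eigenvector of $Q$ with eigenvalue $q^{i-1}\varepsilon$; likewise, if $Q_{0,d} = 0$, then $e_d$ is an eigenvector with eigenvalue $q^d\varepsilon$. Either case would make $q^s\varepsilon$ (for some $0 \le s \le d$) an eigenvalue of $A^*$, contradicting that the spectrum of $A^*$ is $\{1, q, \ldots, q^d\}$ combined with $\varepsilon \notin \{1, q, \ldots, q^d\}$.

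The main obstacle is the middle paragraph: setting up the eigenvector recursion cleanly and recognizing that the two products telescope via $q^n = 1$ to reduce the closure equation to $\mu^n = 1$, thereby pinning down the spectrum of $A$ and building an explicit diagonalizing $P$. Once $P$ is in hand, the shape and nonvanishing analysis of $Q$ proceeds along the same lines as the proof of Lemma \ref{lem:qn1}.
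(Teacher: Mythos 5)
Your proof is correct, but it takes a genuinely different route from the paper's. The paper writes down an explicit transition matrix $P(q,\varepsilon)$ with $(i,j)$-entry $q^{ij}(\varepsilon q^{-i};q)_j/(\varepsilon q;q)_j$, checks the intertwining relations $A(q,\varepsilon)P = P A^*(q^{-1})$ and $A^*(q)P = P A(q^{-1},\varepsilon)$ by matrix multiplication, and proves invertibility by showing that $P(q,\varepsilon)P(q^{-1},\varepsilon)$ commutes with $A$ and $A^*$ (hence is a scalar) and evaluating that scalar with the $q$-Vandermonde summation; the relation $A^n=I$ is then deduced from $(A^*)^n=I$ by conjugating through $P$. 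You instead diagonalize $A$ abstractly: the eigenvector recursion plus the telescoping identity $\prod_{k=0}^{d}(\mu-q^{-k}\varepsilon)=\mu^n-\varepsilon^n$ pins the spectrum to the $n$-th roots of unity, and the commutation relation then forces $Q=P^{-1}A^*P$ into circular bidiagonal shape, with vanishing of a subdiagonal or corner entry ruled out because it would put $q^s\varepsilon$ in the spectrum of $A^*$. This avoids the basic hypergeometric machinery entirely and is closer in spirit to the paper's Section~4 classification argument (compare Lemmas \ref{lem:thRec} and \ref{lem:ai}); what it gives up is the explicit transition matrix, which the paper reuses in Section~7 to exhibit the duality (Proposition \ref{prop:dual1}).

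One small repair is needed: your first-paragraph justification of $A^n=I$ (``$A$ cyclically permutes the standard basis up to multiplicative factors'') is not right as stated, since $A$ has nonzero diagonal entries and sends $e_j$ to a combination of $e_j$ and $e_{j+1}$, so $A^n$ is not a single product of weights. This costs nothing, however: your middle paragraph shows that $A$ is diagonalizable with all eigenvalues $n$-th roots of unity (equivalently, the same telescoping identity shows the characteristic polynomial of $A$ is $\mu^n-1$), and $A^n=I$ follows at once. Since nothing else in your argument relies on $A^n=I$, the proof stands with that step rerouted.
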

\begin{proof} 
Define a matrix $P=P(q,\varepsilon)$ in ${\rm Mat}_{d+1}(\mathbb F)$ with $(i,j)$-entry
\begin{align} \label{eq:Pmatq}
 P_{i,j} = q^{ij} \frac{(\varepsilon q^{-i}; q)_j}{(\varepsilon q; q)_j} \qquad \qquad (0 \leq i,j\leq d).
\end{align}
The above notation is explained in Section 3.
The following two relations are verified by matrix multiplication:
\begin{align} \label{eq:AP1}
A(q,\varepsilon) P(q, \varepsilon) &= P(q, \varepsilon) A^*(q^{-1}), \\
A^*(q) P(q, \varepsilon) &= P(q, \varepsilon) A(q^{-1}, \varepsilon). \label{eq:AP2}
\end{align}
We claim  that $P(q, \varepsilon)$ is invertible. To prove the claim, we show that
\begin{align}
P(q,\varepsilon) P(q^{-1}, \varepsilon) = \frac{(q;q)_d}{(\varepsilon q;q)_d} I. 
\label{eq:PP}
\end{align}
 Abbreviate $Y=P(q,\varepsilon) P(q^{-1}, \varepsilon)$.
 Observe that \eqref{eq:AP1}, \eqref{eq:AP2}
remain valid if we replace $q$ by $q^{-1}$. By this observation, $Y$ commutes with  $A(q, \varepsilon)$ and $A^*(q)$. 
The matrix $Y$ commutes with $A^*(q)={\rm diag}(1,q,\ldots, q^d)$, so $Y$ is diagonal.
Write $Y= {\rm diag}(y_0, y_1, \ldots, y_d)$. For $1 \leq i \leq d$ we compare the $(i,i-1)$-entry on each side of
 $A(q,\varepsilon) Y = Y A(q, \varepsilon)$; this yields
$ y_{i-1} = y_{i}$. Consequently $y_0=y_1=\cdots = y_d$, so $Y=y_0 I$.
We have
\begin{align}
\label{eq:pp}
P(q,\varepsilon) P(q^{-1}, \varepsilon) = y_0 I.
\end{align}
\noindent For the product on the left in \eqref{eq:pp}, we compute  the $(0,0)$-entry using matrix multiplication, and express the
result in terms of basic hypergeometric series \cite{gr}; this yields
\begin{align*}
y_0 = \sum_{j=0}^d \frac{(\varepsilon;q)_j}{ (\varepsilon q; q)_j}
    =  {}_2 \phi_{1} \biggl( \begin{matrix} q^{-d}, \varepsilon \\ \varepsilon q \end{matrix} \bigg\vert q, 1 \biggr)
    =  \frac{(q;q)_d}{(\varepsilon q;q)_d}.
\end{align*}    
In the above line,  the last equality is the $q$-Vandermonde summation formula \cite[Appendix II]{gr}:
\begin{align*}
 {}_2 \phi_{1} \biggl( \begin{matrix} q^{-d}, b \\ c \end{matrix} \bigg\vert q, \frac{c q^d}{b} \biggr)
    =  \frac{(b^{-1}c;q)_d}{(c;q)_d}
\end{align*}
with $b= \varepsilon$ and $c=\varepsilon q$. We have verified \eqref{eq:PP}, and the claim is proven.
By the claim  and \eqref{eq:AP1}, \eqref{eq:AP2} the pair  $A, A^*$ acts on $V$ as a circular bidiagonal pair. 
Concerning the relations in \eqref{eq:AAsRel}, the last two are verified by  matrix multiplication, and the
first is obtained from the second using \eqref{eq:AP1}.
\end{proof}

\begin{note}\label{note:eps} \rm For the circular bidiagonal pair in Lemma  \ref{ex:ddq},
if we set $\varepsilon=0$ then we get the  circular bidiagonal pair in Lemma \ref{lem:qn1}.
\end{note}

\begin{remark}\rm  Referring to Lemma \ref{ex:ddq}, the number of primitive $n^{\rm th}$ roots of unity depends on $\mathbb F$ and $n$; this number might be zero.
For example, 
if   ${\rm Char}(\mathbb F)$ divides $n$ then $\mathbb F$ does not contain a primitive $n^{\rm th}$ root of unity.
\end{remark}

\begin{definition} \label{def:name1} \rm The circular bidiagonal pair $A, A^*$ in Lemma \ref{ex:ddq} will be called ${\rm CBP}({\rm \mathbb F}; d, q, \varepsilon)$.
\end{definition}



 
\noindent For the next example, we return to the vector space $V=\mathbb F^5$. Assume that ${\rm Char}(\mathbb F)=5$. 
Pick  $\gamma \in \mathbb F$ that is not among $0,1,2,3,4$.
Consider the matrices
\begin{align*}
A= \begin{pmatrix} \gamma &0&0&0&-\gamma \\ -1-\gamma&1+\gamma&0&0&0 \\ 0&-2-\gamma&2+\gamma&0&0 \\ 0&0&-3-\gamma&3+\gamma&0 \\ 0&0&0&-4-\gamma&4+\gamma
\end{pmatrix}, \qquad  A^*= {\rm diag}(0,1,2,3,4).
\end{align*}
One checks that 
\begin{align*}
A^5 = A, \qquad \quad (A^*)^5 = A^*, \qquad \quad
AA^*-A^*A+A-A^*=\gamma I.
\end{align*}
We will show that the pair $A, A^*$ acts on $V$ as a circular bidiagonal pair.  This is a special case of the following result.

\begin{lemma} \label{ex:dd} \rm Pick an integer $d\geq 1$, and consider the vector space $V=\mathbb F^{d+1}$. Assume that $n=d+1$ is prime, and that ${\rm Char}(\mathbb F)=n$. 
Pick  $\gamma \in \mathbb F$ that is not among $0,1, 2,\ldots, d$.
We define a matrix $A=A(\gamma)$ in ${\rm Mat}_{d+1}(\mathbb F)$ as follows. 
We have $A_{i,i}=i+\gamma$ for $0 \leq i \leq d$. 
We have $A_{0,d}=-\gamma$, and $A_{i,i-1}=-i-\gamma $ for $1 \leq i \leq d$. All other entries of $A$ are zero.
We define a diagonal matrix  $A^* \in {\rm Mat}_{d+1}(\mathbb F)$ with $A^*_{i,i}= i$ for $0 \leq i \leq d$.
Then the pair $A, A^*$ acts on $V$ as a circular bidiagonal pair. Moreover 
\begin{align}
\label{eq:gam}
A^n = A, \qquad \quad (A^*)^n = A^*, \qquad \quad AA^*-A^*A+A-A^*=\gamma I.
\end{align}
\end{lemma}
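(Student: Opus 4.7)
The plan parallels the proof of Lemma \ref{ex:ddq}, with the characteristic-$n$ hypothesis playing the role that $q^n = 1$ plays in the $q$-case. I first verify the three identities in \eqref{eq:gam}, then exhibit an explicit invertible matrix $P$ that diagonalizes $A$ and conjugates $A^*$ into circular bidiagonal shape.

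For the identities, $(A^*)^n = A^*$ is immediate from Fermat's little theorem applied to the diagonal entries $0, 1, \ldots, d$ of $A^*$, which all lie in $\mathbb F_n$. The commutator identity can be verified entrywise. Since $A^*$ is diagonal, $(AA^* - A^*A)_{i,j} = (j - i)\, A_{i,j}$, which vanishes on the diagonal and agrees with $(\gamma I - A + A^*)_{i,i} = \gamma - (i+\gamma) + i = 0$; equals $-A_{i,i-1}$ on the subdiagonal, matching $(A^* - A)_{i,i-1}$; and equals $d \cdot A_{0,d} = -A_{0,d}$ in the corner, matching $(A^* - A)_{0,d}$ thanks to $d \equiv -1$ in $\mathbb F_n$. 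The corner computation is where the characteristic hypothesis is essential. The identity $A^n = A$ will come at the end, once the eigenvalues of $A$ are identified with $\mathbb F_n$.

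To diagonalize $A$, I would solve $A v = \lambda v$ directly. Rows $i = 1, \ldots, d$ give the two-term recursion $((i+\gamma) - \lambda)\, v_i = (i+\gamma)\, v_{i-1}$, so with $v_0 = 1$ we obtain
\[ v_i = \frac{(\gamma+1)_i}{(\gamma+1-\lambda)_i}, \]
where $(\alpha)_i = \alpha(\alpha+1)\cdots(\alpha+i-1)$ is the classical Pochhammer symbol. The cyclic closure from row $0$ reads $(\gamma - \lambda)\, v_0 = \gamma\, v_d$, which after clearing denominators rewrites as $(\gamma)_n = (\gamma - \lambda)_n$. At this point the characteristic-$n$ identity $(x)_n = \prod_{k=0}^{n-1}(x+k) = x^n - x$, which is just Fermat's factorization $x^n - x = \prod_{k \in \mathbb F_n}(x-k)$ after reindexing, combines with the Frobenius identity $(\gamma - \lambda)^n = \gamma^n - \lambda^n$ in characteristic $n$ to collapse the closure equation to $\lambda^n = \lambda$, i.e.\ $\lambda \in \mathbb F_n = \{0, 1, \ldots, d\}$. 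So $A$ has $n$ distinct eigenvalues in $\mathbb F_n$. Setting $P \in \mathrm{Mat}_{d+1}(\mathbb F)$ by $P_{i,j} = (\gamma+1)_i / (\gamma+1-j)_i$, the hypothesis $\gamma \notin \{0, 1, \ldots, d\}$ makes every denominator nonzero and $P$ well-defined; the distinct eigenvalues make $P$ invertible; and by construction $A P = P A^*$. Since $A$ is conjugate to the diagonal $A^*$ with entries in $\mathbb F_n$, Fermat gives $A^n = A$.

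It remains to check that $P^{-1} A^* P$ is circular bidiagonal. Applying $[A, A^*] = A^* - A + \gamma I$ to the $j$-th column $e_j$ of $P$ yields $(A - (j+1)I)(A^* e_j) = (\gamma - j)\, e_j$. Since $A$ has simple spectrum $\mathbb F_n$ with $\ker(A - (j+1)I) = \mathbb F\, e_{(j+1) \bmod n}$ (the reduction mod $n$ being used precisely when $j = d$), $A^* e_j$ must lie in $\mathrm{span}\{e_j,\, e_{(j+1) \bmod n}\}$. Hence $A^*$ in the $P$-basis has nonzero entries only on the diagonal, the subdiagonal, and the corner $(0, d)$. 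To verify the off-diagonal coefficients are actually nonzero, compare entries: the coefficient in front of $e_{(j+1) \bmod n}$ equals $(\gamma+1)\,(e_j)_{j+1}/(e_{j+1})_{j+1}$ for $0 \le j < d$ and $\gamma - d \equiv \gamma + 1$ in $\mathbb F_n$ for $j = d$, each nonzero under the hypothesis on $\gamma$. Thus $A^*$ is genuinely circular bidiagonal in the $P$-basis, which furnishes the basis required by Definition \ref{def:cbp}(ii), completing the proof that $(A, A^*)$ is a circular bidiagonal pair. The main obstacle is recognizing that the cyclic closure $(\gamma)_n = (\gamma - \lambda)_n$ collapses via the identity $(x)_n = x^n - x$ in characteristic $n$; once that is in hand, the remaining structural analysis proceeds by routine calculation.
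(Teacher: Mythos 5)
Your proof is correct, and it takes a genuinely different route from the paper's. The paper writes down the explicit transition matrix $P(\gamma)$ of \eqref{eq:Pmat}, verifies $A(\gamma)P(\gamma)=P(\gamma)A^*$ and $A^*P(\gamma)=P(\gamma)A(-\gamma)$ by direct matrix multiplication, and then proves invertibility of $P(\gamma)$ by establishing $P(\gamma)P(-\gamma)=\frac{d!}{(1-\gamma)_d}I$ via the Vandermonde summation formula for ${}_2F_1$. You instead verify the commutator identity first, solve the eigenvalue recursion for $A$ from scratch, and use the characteristic-$n$ identity $(x)_n=x^n-x$ together with the Frobenius endomorphism to pin down the spectrum of $A$ as $\lbrace 0,1,\ldots,d\rbrace$; invertibility of your transition matrix is then free (eigenvectors for distinct eigenvalues are independent), with no hypergeometric summation needed. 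Your verification that $A^*$ is circular bidiagonal on the eigenbasis of $A$ is likewise structural rather than computational: from $(A-(j+1)I)A^*p_j=(\gamma-j)p_j$ and the simplicity of the spectrum you read off that $A^*p_j$ lies in the span of $p_j$ and $p_{(j+1)\bmod n}$, which is essentially the Section~4 analysis (compare Lemma \ref{lem:thRec} and Lemma \ref{lem:aiCirc}) run in reverse on this example. What the paper's approach buys is a closed form for $P^{-1}$ (namely $P(-\gamma)$ up to the scalar in \eqref{eq:PPi}) and hence the duality statement of Proposition \ref{prop:dual2}; what yours buys is elementariness and a self-contained determination of the spectrum. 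One small presentational point: writing $e_j$ for the $j$-th column of your $P$ invites confusion with the standard basis, and the nonvanishing of the off-diagonal coefficients is seen most quickly by comparing $0$-th coordinates (every column of your $P$ has $0$-th entry $1$), which gives the coefficient of $p_{(j+1)\bmod n}$ in $A^*p_j$ as $\gamma-j\neq 0$ directly, consistent with your $(\gamma+1)P_{j+1,j}/P_{j+1,j+1}$.
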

\begin{proof} 
Define a matrix $P=P(\gamma)$ in ${\rm Mat}_{d+1}(\mathbb F)$ with $(i,j)$-entry
\begin{align} \label{eq:Pmat}
 P_{i,j} = \frac{(-i-\gamma)_j}{(1-\gamma)_j}  \qquad \qquad (0 \leq i,j\leq d).
\end{align}
The above notation is explained in Section 3.
The following two relations are verified by matrix multiplication:
\begin{align} \label{eq:APP2}
A(\gamma) P(\gamma ) &= P(\gamma) A^*, \\
A^* P(\gamma) &= P(\gamma) A(-\gamma). \label{eq:AsP2}
\end{align}
\noindent We claim that $P(\gamma)$ is invertible. To prove the claim, we show that
\begin{align} \label{eq:PPi}
P(\gamma ) P(-\gamma) = \frac{d!}{(1-\gamma)_d} I.
\end{align}
Abbreviate $Y=P(\gamma) P(-\gamma)$.
 Observe that \eqref{eq:APP2}, \eqref{eq:AsP2}
remain valid if we replace $\gamma$ by $-\gamma$. By this observation, $Y$ commutes with  $A(\gamma)$ and $A^*$. 
The matrix $Y$ commutes with $A^*={\rm diag}(0,1,2,\ldots, d)$, so $Y$ is diagonal.
Write $Y= {\rm diag}(y_0, y_1, \ldots, y_d)$. For $1 \leq i \leq d$ we compare the $(i,i-1)$-entry on each side of
 $A(\gamma) Y = Y A(\gamma)$; this yields
$ y_{i-1} = y_{i}$. Consequently $y_0=y_1=\cdots = y_d$, so $Y=y_0 I$.
We have
\begin{align}
\label{eq:psp}
P(\gamma) P(-\gamma) = y_0 I.
\end{align}
\noindent For the product on the left in \eqref{eq:psp}, we compute  the $(0,0)$-entry using matrix multiplication, and express the
result in terms of hypergeometric series \cite{sf}; this yields
\begin{align*}
y_0 = \sum_{j=0}^d \frac{(-\gamma)_j}{(1-\gamma)_j }
    =  {}_2 F_{1} \biggl( \begin{matrix} -d, -\gamma \\ 1-\gamma\end{matrix} \bigg\vert  1 \biggr)
    =   \frac{d!}{(1-\gamma)_{d}}.
\end{align*}    
In the above line,  the last equality is the Vandermonde summation formula \cite[Chapter 2]{sf}:
\begin{align*}
 {}_2 F_{1} \biggl( \begin{matrix} -d, b \\ c \end{matrix} \bigg\vert \;1 \biggr)
    =  \frac{(c-b)_d}{(c)_d}
\end{align*}
with $b= -\gamma$ and $c=1-\gamma$. We have verified \eqref{eq:PPi}, and the claim is proven.
By the claim and \eqref{eq:APP2},  \eqref{eq:AsP2} the pair $A, A^*$ acts on $V$ as a circular bidiagonal pair.
Concerning the relations in \eqref{eq:gam}, the last two are verified by matrix multiplication, and the first is obtain from the second using
\eqref{eq:APP2}.
\end{proof}

\begin{definition} \label{def:name2} \rm The circular bidiagonal pair $A, A^*$ in Lemma \ref{ex:dd} will be called ${\rm CBP}(\mathbb F; d, \gamma)$.
\end{definition}

\noindent Next, we define the notion of isomorphism for circular bidiagonal pairs.
\begin{definition}\label{def:cbpIso} \rm Let $A, A^*$ denote a circular bidiagonal pair on a vector space $V$, and let $B, B^*$ denote a circular bidiagonal pair on a vector space $\mathcal V$.
By an {\it isomorphism of circular bidiagonal pairs} from $A, A^*$ to $B, B^*$ we mean an isomorphism of vector spaces $\sigma: V \to {\mathcal V}$
such that $\sigma A = B \sigma $ and $\sigma A^* = B^* \sigma$. We say that the circular bidiagonal pairs $A, A^*$ and $B, B^*$ are {\it isomorphic}
whenever there exists an isomorphism of circular bidiagonal pairs from $A, A^*$ to $B, B^*$.
\end{definition}

\noindent In Section 7, we use the concepts of isomorphism and duality to intrepret the proof of Lemmas \ref{ex:ddq}, \ref{ex:dd}.
\medskip

\noindent Next, we show that the circular bidiagonal pairs in Lemmas \ref{ex:ddq}, \ref{ex:dd} are mutually nonisomorphic.

\begin{lemma} \label{thm:th2} The following {\rm (i), (ii)} hold for $d\geq 1$.
\begin{enumerate}
\item [\rm (i)] 
Assume that ${\rm Char}(\mathbb F) \not=d+1$.
Then circular bidiagonal pairs ${\rm CBP}(\mathbb F; d, q, \varepsilon)$ and ${\rm CBP}(\mathbb F; d, q', \varepsilon')$
are isomorphic if and only if both
\begin{align*}
 q = q', \qquad  \varepsilon = \varepsilon'.
\end{align*}
\item[\rm (ii)] Assume that  ${\rm Char}(\mathbb F) =d+1$.
Then circular bidiagonal pairs ${\rm CBP}(\mathbb F; d, \gamma )$ and ${\rm CBP}(\mathbb F; d, \gamma')$
are isomorphic if and only if $\gamma= \gamma'$.
\end{enumerate}
\end{lemma}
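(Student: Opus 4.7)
My plan is to recover each parameter as an algebraic invariant of the isomorphism class, using the defining identities recorded in \eqref{eq:AAsRel} and \eqref{eq:gam}. The ``if'' direction of each part is trivial because equal parameters produce literally identical pairs, so I would focus entirely on the ``only if'' direction.

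Suppose $\sigma\colon V\to \mathcal V$ is an isomorphism of circular bidiagonal pairs from $(A,A^*)$ to $(B,B^*)$, so $A=\sigma^{-1}B\sigma$ and $A^*=\sigma^{-1}B^*\sigma$. The governing principle is that every polynomial identity satisfied by $B,B^*$ is also satisfied by $A,A^*$, because conjugation by $\sigma^{-1}$ fixes the scalar matrix $I$ and is compatible with sums, products, and scalar multiples. For part (ii) this settles matters immediately: the left-hand side of \eqref{eq:gam}, namely $AA^*-A^*A+A-A^*$, involves no parameter, so the identity for $(B,B^*)$ transported via $\sigma$ gives $AA^*-A^*A+A-A^*=\gamma' I$, while the identity for $(A,A^*)$ gives the same expression $=\gamma I$. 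Therefore $\gamma I=\gamma' I$ and $\gamma=\gamma'$.

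For part (i), the same device applied to the last relation in \eqref{eq:AAsRel} for $(B,B^*)$ gives $q'AA^*-A^*A=\varepsilon'(q'-1)I$. Subtracting the identity $qAA^*-A^*A=\varepsilon(q-1)I$ for $(A,A^*)$ yields
\[
(q-q')\,AA^* \;=\; \bigl(\varepsilon(q-1)-\varepsilon'(q'-1)\bigr)\,I.
\]
The crux is to show that $AA^*$ is not a scalar multiple of $I$. From the definitions in Lemma \ref{ex:ddq}, I would compute the $(0,d)$-entry of $AA^*$ directly: it equals $A_{0,d}\,A^*_{d,d}=(1-\varepsilon)q^d$, which is nonzero because the hypothesis $\varepsilon\notin\{1,q,\ldots,q^d\}$ forces $\varepsilon\neq 1$. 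Since any scalar multiple of $I$ has vanishing $(0,d)$-entry, we conclude $q=q'$. Then $\varepsilon(q-1)=\varepsilon'(q-1)$, and since $q$ is a primitive $n$-th root of unity with $n=d+1\geq 2$ we have $q\neq 1$, so $\varepsilon=\varepsilon'$.

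The only nontrivial step is the single-entry verification that $AA^*$ is non-scalar; everything else reduces to transporting the defining identities through $\sigma$ and invoking the standing hypotheses on $\varepsilon$ and $q$. Accordingly I do not anticipate any genuine obstacle, and the proof should be short.
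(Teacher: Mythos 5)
Your proof is correct, but it follows a genuinely different route from the paper's. The paper argues directly on the conjugating matrix $P$: since $A^*$ and $B^*$ are diagonal with distinct diagonal entries, the nonzero entries of $P$ are supported on the graph of a permutation $p$; the normalization $A^*_{0,0}=1=B^*_{0,0}$ forces $p(0)=0$, and the circular bidiagonal shape of $A$ and $B$ forces $p(i)=p(i-1)+1$, so $P$ is diagonal, whence $A^*=B^*$ (giving $q=q'$) and the diagonals of $A$ and $B$ agree (giving $\varepsilon=\varepsilon'$). You instead transport the identities \eqref{eq:AAsRel} and \eqref{eq:gam} through the isomorphism and read off the parameters as invariants, which requires only the one computation that $AA^*$ is not scalar (via the $(0,d)$-entry $(1-\varepsilon)q^d\neq 0$) together with $q\neq 1$ and $\varepsilon\neq 1$, both of which are part of the standing hypotheses. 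Your argument is shorter and is really the observation that the profile of Definition \ref{def:profile} is an isomorphism invariant --- the same device the paper deploys later, in Section 6, for the affine-equivalence classification --- so it fits naturally into the paper's framework; the paper's entry-chasing proof is more elementary and yields the extra structural fact that every isomorphism between two such pairs is necessarily a diagonal matrix, which your approach does not recover.
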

\noindent The proof of Lemma \ref{thm:th2} will be completed in Section 5.
\medskip

\noindent Next, we describe how to adjust a circular bidiagonal pair to obtain another circular bidiagonal pair. 

\begin{lemma} \label{lem:adjust} Let $A, A^*$ denote a circular bidiagonal pair on a vector space $V$. Pick scalars 
$s, s^*, t, t^* $ in $\mathbb F$ with $s, s^*$ nonzero. Then the pair $sA+tI, s^*A^*+t^*I$ is a circular bidigonal pair on $V$.
\end{lemma}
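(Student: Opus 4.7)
The plan is to directly check that the pair $B = sA + tI$ and $B^* = s^*A^* + t^*I$ satisfies both conditions of Definition \ref{def:cbp}, using for each condition the same basis that witnesses the corresponding condition for $A, A^*$. The key observation is that adding a scalar multiple of $I$ only alters diagonal entries, and multiplying by a nonzero scalar preserves the zero/nonzero pattern of every entry.

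For condition (i), fix a basis for $V$ with respect to which $A$ is represented by a circular bidiagonal matrix $M$ and $A^*$ is represented by a diagonal matrix $D$. With respect to the same basis, $B$ is represented by $sM + tI$ and $B^*$ by $s^*D + t^*I$. Since $D$ and $I$ are diagonal, $s^*D + t^*I$ is diagonal. For $sM + tI$, I would first note that $I$ has nonzero entries only on the diagonal, so the off-diagonal entries of $sM + tI$ coincide with $s$ times the off-diagonal entries of $M$; in particular, entries outside the diagonal, subdiagonal, and top-right corner vanish. Next, the subdiagonal entries of $sM + tI$ equal $s$ times those of $M$ and are therefore nonzero (using $s \neq 0$ and the hypothesis that $M$ is circular bidiagonal), and likewise the top-right corner of $sM + tI$ equals $s$ times that of $M$, hence is nonzero. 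Thus $sM + tI$ is circular bidiagonal.

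For condition (ii), I would repeat the same argument verbatim with the roles of $A$ and $A^*$ interchanged, using the basis supplied by condition (ii) of Definition \ref{def:cbp} for the pair $A, A^*$.

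There is essentially no obstacle here; the statement is a structural compatibility between affine reparametrization and the circular bidiagonal shape. The only mildly delicate point is that one must justify why the top-right entry of $I$ contributes nothing (true because in dimension at least $2$ the $(0,d)$-entry of $I$ is $0$), and why the subdiagonal entries of $I$ contribute nothing (same reason). The one-dimensional case is trivial since, as noted in the paragraph following Definition \ref{def:dual}, every ordered pair of linear maps on a one-dimensional space is a circular bidiagonal pair.
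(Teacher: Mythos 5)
Your proof is correct and fills in exactly the routine verification the paper omits (its proof is literally the single word ``Routine''): use the same witnessing bases and observe that $sM+tI$ preserves the circular bidiagonal pattern since $s\neq 0$ and $I$ only touches the diagonal. The handling of the one-dimensional case via the paper's own remark is also appropriate.
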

\begin{proof} Routine.
\end{proof}

\begin{definition}\rm Referring to Lemma \ref{lem:adjust}, the pair $sA+tI, s^*A^*+t^*I$ is called an {\it affine transformation} of $A, A^*$.
\end{definition}

\noindent Next, we define the notion of affine equivalence for circular bidiagonal pairs.

\begin{definition} \label{def:aff} \rm Let $A, A^*$ and $B, B^*$ denote circular bidiagonal pairs over $\mathbb F$.
We say that $A, A^*$ and $B, B^*$ are {\it affine equivalent} whenever there exists an affine transformation of $A, A^*$ that is isomorphic to $B, B^*$.
\end{definition}

\noindent Next, we apply the concept of affine equivalence to the circular bidiagonal pairs in Lemmas \ref{ex:ddq}, \ref{ex:dd}.

\begin{lemma} \label{thm:th3} The following {\rm (i), (ii)} hold for $d\geq 1$.
\begin{enumerate}
\item[\rm (i)]
Assume that ${\rm Char}(\mathbb F) \not=d+1$.
Then  circular bidiagonal pairs ${\rm CBP}(\mathbb F; d, q, \varepsilon)$ and ${\rm CBP}(\mathbb F; d, q', \varepsilon')$
are affine equivalent if and only if both
\begin{align*}
 q = q', \qquad \qquad  \varepsilon' \in \lbrace \varepsilon, q\varepsilon, q^2 \varepsilon, \ldots, q^d \varepsilon\rbrace.
\end{align*}
\item[\rm (ii)] Assume that ${\rm Char}(\mathbb F) =d+1$.
Then  circular bidiagonal pairs ${\rm CBP}(\mathbb F; d, \gamma)$ and ${\rm CBP}(\mathbb F; d, \gamma')$
are affine equivalent if and only if
\begin{align*}
\gamma'- \gamma \in \lbrace 0,1,2,\ldots, d\rbrace.
\end{align*}
\end{enumerate}
\end{lemma}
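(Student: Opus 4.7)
The plan is to handle each of parts (i) and (ii) by proving both directions of the ``if and only if.'' For the forward direction I combine eigenvalue constraints, forced by the similarity to the target ${\rm CBP}$, with the three-term defining relations of Lemmas \ref{ex:ddq} and \ref{ex:dd}. The reverse direction I prove by exhibiting an explicit affine transformation together with a basis change that produces the standard form of the target.

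For the forward direction, write $B = sA + tI$, $B^* = s^*A^* + t^*I$ and assume $B, B^*$ is isomorphic to the target ${\rm CBP}$. In case (i), the eigenvalues of $A^*$ form the full group of $n^{\rm th}$ roots of unity and sum to zero because ${\rm Char}(\mathbb F) \nmid n$; matching traces forces $t = t^* = 0$, and then $B^n = I$ and $(B^*)^n = I$ force $s, s^*$ to be $n^{\rm th}$ roots of unity, hence elements of $\{1, q, \ldots, q^d\}$. Substituting into $q'BB^* - B^*B = (q'-1)\varepsilon' I$ and using \eqref{eq:AAsRel}, I deduce that $(q'-q)\, s s^* AA^*$ must be a scalar multiple of $I$. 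A short computation using $(AA^*)_{i,j} = q^j A_{i,j}$ shows $AA^*$ has nonzero $(i,i-1)$-entries (since $\varepsilon \notin \{1,q,\ldots, q^d\}$), so $q' = q$, and then $\varepsilon' = ss^*\varepsilon \in \{q^j\varepsilon : 0 \leq j \leq d\}$. In case (ii), the same eigenvalue analysis in ${\rm Char}(\mathbb F) = p = n$ forces $s, t, s^*, t^* \in \mathbb F_p$ with $s, s^* \neq 0$. Expanding $BB^* - B^*B + B - B^*$ and comparing with \eqref{eq:gam} yields
\[
s(1 - s^*)\, A + s^*(s - 1)\, A^* + (s s^* \gamma + t - t^*)\, I = \gamma' I.
\]
Since $I$, $A$, $A^*$ are linearly independent, this forces $s = s^* = 1$ and $\gamma' - \gamma = t - t^* \in \mathbb F_p$.

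For the reverse direction of (i), given $\varepsilon' = q^j \varepsilon$, I take $(B, B^*) = (A, q^j A^*)$ and the cyclic basis shift $\tilde{e}_i = e_{i - j \bmod n}$; direct substitution using the uniform formula $A e_k = q^{-k}\varepsilon\, e_k + (1 - q^{-(k+1)}\varepsilon)\, e_{k+1 \bmod n}$ converts the pair in the new basis exactly into the matrices $A(q, q^j \varepsilon),\, A^*(q)$ of Lemma \ref{ex:ddq}. For the reverse direction of (ii), given $\gamma' = \gamma + k$ with $k \in \{0, \ldots, d\}$, I take $(B, B^*) = (A + k I, A^*)$ together with the diagonal basis change $\tilde{e}_i = c_i e_i$ defined by $c_0 = 1$ and $c_{i+1}/c_i = (i+1+\gamma)/(i+1+\gamma+k)$. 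The resulting $B$ has the correct diagonal and subdiagonal entries, but the top-right entry imposes the cyclic consistency condition $c_d/c_0 = (\gamma+k)/\gamma$, equivalently $\prod_{j=0}^d (j+\gamma) = \prod_{j=0}^d (j+\gamma+k)$. This reduces via $\prod_{j \in \mathbb F_p}(x + j) = x^p - x$ to $(\gamma+k)^p - (\gamma+k) = \gamma^p - \gamma$, which holds by Fermat's little theorem applied to $k \in \mathbb F_p$. Verifying this cyclic consistency is the main obstacle, and it depends critically on being in characteristic $p = n$.
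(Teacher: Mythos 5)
Your proof is correct, but it takes a genuinely different route from the paper's in both directions. For the forward direction, the paper invokes the profile invariant $q,\alpha,\beta,\gamma$ of Proposition \ref{prop:AAs}: isomorphic pairs have equal profiles, and the second table of Lemma \ref{lem:affineAdj} records how an affine transformation changes the profile, which immediately yields $q=q'$, $t=t^*=0$, $ss^*\varepsilon=\varepsilon'$ in case (i) (resp. $s=s^*=1$, $\gamma'-\gamma=t-t^*$ in case (ii)); the final membership statement then comes from matching spectra, exactly as you do. You instead re-derive these facts by hand from the explicit relations \eqref{eq:AAsRel} and \eqref{eq:gam} together with trace and spectrum comparisons --- this is self-contained and avoids the profile machinery, at the cost of redoing a computation the paper has already packaged. (One small wording slip: $\sum_{i=0}^d q^i=0$ holds for any primitive $n$-th root of unity regardless of characteristic; where you actually need ${\rm Char}(\mathbb F)\nmid n$ --- which is guaranteed by the existence of $q$ --- is in passing from $nt^*=0$ to $t^*=0$.) For the reverse direction, the paper exhibits the intertwiners $A^*-\varepsilon I$ and $A^*+\gamma I$, which realize one step $\varepsilon\mapsto q\varepsilon$ (resp. $\gamma\mapsto\gamma-1$), and iterates using transitivity of affine equivalence (Corollaries \ref{cor:n1}, \ref{cor:n2}). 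You instead produce the general step in one shot via a cyclic permutation of the basis in case (i) and a diagonal rescaling in case (ii); the closing condition $\prod_{j=0}^d(j+\gamma)=\prod_{j=0}^d(j+\gamma+k)$, which you settle with $\prod_{j\in\mathbb F_p}(x+j)=x^p-x$ and Fermat's little theorem, is the honest price of going around the cycle all at once, and your verification of it is correct. Both routes are valid; yours is more explicit and elementary, while the paper's reuses its classification apparatus and keeps each verification to a single matrix identity.
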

\noindent The proof of Lemma \ref{thm:th3} will be completed in Section 6.
\medskip

\noindent The following is  our main result. 
\begin{theorem} \label{thm:main} Pick an integer $d\geq 1$. Let $A, A^*$ denote a circular bidiagonal pair on a vector space of dimension $d+1$.
First assume that ${\rm Char}(\mathbb F) \not=d+1$. Then 
$A, A^*$ is affine equivalent to ${\rm CBP}(\mathbb F;d,q,\varepsilon)$ for at least one ordered pair $q, \varepsilon$.
Next assume that ${\rm Char}(\mathbb F) =d+1$. Then 
$A, A^*$ is affine equivalent to ${\rm CBP}(\mathbb F; d, \gamma)$ for at least one $\gamma$.
\end{theorem}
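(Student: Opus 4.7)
The plan is to use both bases guaranteed by Definition \ref{def:cbp} simultaneously, derive a family of product identities constraining the structure, extract a geometric or arithmetic progression of $A^*$-eigenvalues, and match the result to one of the canonical families in Lemmas \ref{ex:ddq} and \ref{ex:dd}.

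First I would fix a basis $\{v_0,\ldots,v_d\}$ from (i) with $A^* v_i = \theta^*_i v_i$ and $A v_i = a_i v_i + b_i v_{i+1 \bmod (d+1)}$, and a dual basis $\{u_0,\ldots,u_d\}$ from (ii) with $A u_j = \theta_j u_j$ and $A^* u_j = a^*_j u_j + b^*_j u_{j+1 \bmod (d+1)}$, all $b_i, b^*_j$ nonzero. A direct rank argument shows that every circular bidiagonal matrix has only one-dimensional eigenspaces, so the eigenvalues $\theta^*_i$ (and $\theta_j$) are pairwise distinct. Next, let $P$ be the change-of-basis matrix whose $j$-th column expresses $u_j$ in $v$-coordinates. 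Since column $j$ of $P$ is an $A$-eigenvector with eigenvalue $\theta_j$, the standard eigenvector recursion for a circular bidiagonal matrix yields an explicit formula for each column of $P$; dually, the relation $A^*_v P = P A^*_u$ supplies a row recursion giving a second explicit formula. Matching the two expressions for each entry $P_{i,j}$ yields, after cancellation, the identities
\begin{align*}
\prod_{k=0}^{j-1}\frac{\theta^*_i - a^*_k}{\theta^*_0 - a^*_k}\;=\;\prod_{k=1}^{i}\frac{\theta_0 - a_k}{\theta_j - a_k}\qquad (0\le i,j\le d),
\end{align*}
interlocking all four parameter sequences.

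The main obstacle will be to extract a progression structure from this family of identities. Careful analysis — starting from the specializations at $i = 1$ and at $j = 1$, chaining the resulting ratios via induction on $(i,j)$, and using the characteristic polynomial identity $\chi_{A^*}(x) = \prod_k(x-a^*_k) - \prod_k b^*_k$ to pin down the $a^*_k$ in terms of the $\theta^*_i$ — should force the sequence $(\theta^*_i)_{i=0}^{d}$ to satisfy a linear two-term recursion $\theta^*_{i+1} = \lambda\,\theta^*_i + c$ for some $\lambda, c \in \mathbb F$. Hence, after an affine transformation of $A^*$, the sequence is either $\theta^*_i = q^i$ (geometric, with $q = \lambda \neq 1$) or $\theta^*_i = i$ (arithmetic, $\lambda = 1$). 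The cyclic closure $\theta^*_{d+1} = \theta^*_0$ then forces $q^{d+1} = 1$ with $q$ a primitive $(d+1)$-th root of unity (so $\mathrm{Char}(\mathbb F) \nmid d+1$) or $d+1 = 0$ in $\mathbb F$ (so $\mathrm{Char}(\mathbb F) = d+1$), respectively.

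In the geometric case, the same structural analysis further pins down $a_i\,\theta^*_i = \varepsilon$ for a common $\varepsilon \in \mathbb F$, so $a_i = \varepsilon q^{-i}$. The characteristic polynomial formula for circular bidiagonal $A$ then forces $\prod_i b_i = 1 - \varepsilon^{d+1}$, and the residual freedom to rescale basis vectors $v_i \mapsto \mu_i v_i$ (an isomorphism preserving both canonical forms while redistributing the $b_i$ at fixed product) brings $A, A^*$ to the exact form of $\mathrm{CBP}(\mathbb F; d, q, \varepsilon)$. The arithmetic case proceeds in parallel, producing $\mathrm{CBP}(\mathbb F; d, \gamma)$ with $\gamma = a_i - \theta^*_i$ (constant in $i$ by the analysis). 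The resulting parameters $(q,\varepsilon)$ or $\gamma$ are well-defined up to the orbits identified in Lemma \ref{thm:th3}.
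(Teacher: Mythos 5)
Your setup is sound and genuinely different from the paper's route. You work with the explicit transition matrix $P$ between the two canonical bases and derive the interlocking product identities; the paper instead proves an operator identity $qAA^*-A^*A+\alpha A-\beta A^*=\gamma I$ (Proposition \ref{prop:AAs}), obtained from the subspace computation $M^*AM^*+M^*=AM^*+M^*=M^*A+M^*$, and then reads off the eigenvalue recursions, the $a_i$, and the cyclic constraint on $q$ from that single relation. Your product identities are correct (I checked them against the two eigenvector recursions), your rank argument for multiplicity-freeness works, and your endgame (characteristic polynomial $\prod_k(x-a_k)-\prod_k b_k$ plus rescaling $v_i\mapsto \mu_i v_i$ to redistribute the $b_i$ at fixed product) is a legitimate alternative to the paper's constant-row-sum basis $\{E^*_i\xi\}$.

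The genuine gap is the middle step: the assertion that ``careful analysis \dots should force'' the two-term recursion $\theta^*_{i+1}=\lambda\theta^*_i+c$ and the identities $a_i\theta^*_i=\varepsilon$ (resp.\ $a_i-\theta^*_i=\gamma$). This is the entire technical content of the classification, and as written it is not proved; the specializations at $i=1$ and $j=1$ that you name do not by themselves yield a recursion with coefficients independent of the index, and the characteristic polynomial determines only the product $\prod_k(x-a^*_k)-\prod_k b^*_k$, not the individual $a^*_k$. The step is fillable along your lines: taking the ratio of your identity at $(i+1,j)$ to that at $(i,j)$, and then at $j+1$ to $j$, gives the cross relation
\begin{align*}
(\theta^*_{i+1}-a^*_j)(\theta_{j+1}-a_{i+1})=(\theta^*_i-a^*_j)(\theta_j-a_{i+1})
\end{align*}
for all $i,j$ modulo $d+1$; expanding and eliminating $a^*_j$ between two values of $i$ shows that $(\theta^*_{i+1}-\theta^*_{i+2})\theta_{j+1}-(\theta^*_i-\theta^*_{i+1})\theta_j$ is independent of $j$, which delivers the desired recursion, and substituting back gives the formulas for $a_i, a^*_j$. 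This cross relation is precisely the entrywise shadow of the paper's Proposition \ref{prop:AAs}. Until you carry out such a derivation (and also record the small facts you use silently, e.g.\ $\theta_j\neq a_k$ so that the eigenvector recursion can be solved, which is Lemma \ref{lem:match} in the paper), the proof is incomplete at its central point.
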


\noindent The proof of Theorem \ref{thm:main} will be completed in Section 4.
\medskip

\noindent We have a comment.
\begin{lemma} \label{lem:com} The following {\rm (i), (ii)} hold for $d\geq 1$.
\begin{enumerate}
\item[\rm (i)]  
Assume that ${\rm Char}(\mathbb F) \not=d+1$, and write $A, A^*$ for ${\rm CBP}(\mathbb F; d,q,\varepsilon)$. Then $A, A^*$ is isomorphic to $qA, q^{-1}A^*$.
\item[\rm (ii)]
Assume that ${\rm Char}(\mathbb F) =d+1$, and write $A,A^*$ for ${\rm CBP}(\mathbb F;d,\gamma)$. Then $A, A^*$ is isomorphic to $A-I, A^*-I$.
\end{enumerate}
\end{lemma}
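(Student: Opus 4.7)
The plan is to exhibit, in each case, an explicit invertible map $\sigma:V\to V$ and then verify the two intertwining relations of Definition \ref{def:cbpIso} by direct computation, using the algebraic identities \eqref{eq:AAsRel} and \eqref{eq:gam}.

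For part (i) my candidate is $\sigma=AA^*$, which is invertible because $A^n=I$ and $(A^*)^n=I$ force both $A$ and $A^*$ to be invertible. The desired conditions $\sigma A=qA\sigma$ and $\sigma A^*=q^{-1}A^*\sigma$ will then follow in one line from $A^*A=qAA^*$: shifting the middle factor gives $AA^*\cdot A = A(qAA^*) = qA\sigma$, and the equivalent rewrite $AA^*=q^{-1}A^*A$ gives $AA^*\cdot A^* = q^{-1}A^*AA^* = q^{-1}A^*\sigma$. So (i) reduces essentially to the $q$-commutation.

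For part (ii) my candidate is $\sigma=A^*-A+\gamma I$. The relation \eqref{eq:gam} rearranges to $A^*A-AA^*+A^*-A+\gamma I=0$, and I expect that expanding both $\sigma A-(A-I)\sigma$ and $\sigma A^*-(A^*-I)\sigma$ will in each case collapse to exactly this expression and hence vanish, giving the intertwining relations immediately. The subtler task will be invertibility. To handle it, I would compute $\sigma$ in the standard basis and observe that its only nonzero entries are $\sigma_{0,d}=\gamma$ and $\sigma_{i,i-1}=i+\gamma$ for $1\le i\le d$, so $\sigma$ is a twisted cyclic shift whose determinant is $\pm\gamma(1+\gamma)(2+\gamma)\cdots(d+\gamma)$. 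This is nonzero exactly when $\gamma\notin\{0,-1,-2,\ldots,-d\}$; in characteristic $d+1$ that set equals $\{0,1,2,\ldots,d\}$, so the hypothesis on $\gamma$ in Lemma \ref{ex:dd} is precisely what is needed.

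The main obstacle is identifying the correct $\sigma$ for (ii); the formula $A^*-A+\gamma I$ can be motivated by first imposing $\sigma A^*=(A^*-I)\sigma$, which on eigenvalue grounds forces $\sigma$ to be a twisted cyclic shift $\sigma e_{i-1}=\alpha_i e_i$ (indices mod $d+1$), and then solving the recursion forced by $\sigma A=(A-I)\sigma$, which yields $\alpha_i$ proportional to $i+\gamma$ and so $\sigma$ proportional to $A^*-A+\gamma I$.
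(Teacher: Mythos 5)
Your part (ii) is correct and is essentially the paper's own argument: the paper takes the same map $R=A^*-A+\gamma I$ (Definition \ref{def:raise} and Lemma \ref{lem:raise2}), and your determinant computation for invertibility is an acceptable substitute for the paper's identity $R^{d+1}=(\gamma)_{d+1}I$.

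Part (i), however, has a genuine gap. The relation $A^*A=qAA^*$ that your one-line computation rests on holds only for ${\rm CBP}(\mathbb F;d,q,0)$ (Lemma \ref{lem:qn1}); for general $\varepsilon$ the relation \eqref{eq:AAsRel} is $qAA^*-A^*A=\varepsilon(q-1)I$. With $\sigma=AA^*$ one gets
\begin{align*}
\sigma A - qA\sigma \;=\; A\bigl(A^*A-qAA^*\bigr)\;=\;-\varepsilon(q-1)A,
\end{align*}
which is nonzero whenever $\varepsilon\neq 0$ (note $q\neq 1$ since $q$ is a primitive $(d+1)^{\rm st}$ root of unity and $d\geq 1$, and $A$ is invertible). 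So $AA^*$ is not an isomorphism from $A,A^*$ to $qA,q^{-1}A^*$ except in the special case $\varepsilon=0$.

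The repair is the same shift you correctly built into part (ii): take $\sigma=AA^*-\varepsilon I$, which by \eqref{eq:AAsRel} also equals $q^{-1}(A^*A-\varepsilon I)$. Then $\sigma A=q^{-1}(A^*A-\varepsilon I)A=q^{-1}A^*\!AA-\varepsilon q^{-1}A$ and $qA\sigma=qA(AA^*-\varepsilon I)\cdot q^{-1}\cdot q=\dots$; more cleanly, $qA\sigma=A(qAA^*-q\varepsilon I)\cdot q^{-1}\cdot q$ is unnecessary bookkeeping --- just compute $qA(AA^*-\varepsilon I)=A\,(qAA^*-q\varepsilon I)$ and $(q^{-1}(A^*A-\varepsilon I))A\cdot q = (A^*A-\varepsilon I)A$, and both intertwining relations fall out exactly as in your part (ii) calculation. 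This $\sigma$ is (up to the factor $q$) the paper's raising matrix $R=A^*A-\varepsilon I$, whose only nonzero entries are $R_{0,d}=1-\varepsilon$ and $R_{i,i-1}=q^i-\varepsilon$; its invertibility follows from $\varepsilon\notin\{1,q,\ldots,q^d\}$, a check that is genuinely needed here and that your $AA^*$ candidate let you skip. It is worth noting that the heuristic you describe at the end of your writeup --- forcing $\sigma$ to be a twisted cyclic shift and solving the resulting recursion --- would have produced the correct $\sigma$ in case (i) as well.
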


\noindent The proof of Lemma \ref{lem:com} will be completed in Section 8.
\medskip

\noindent
 In Section 9, we discuss how circular bidiagonal pairs are related to the circular Hessenberg pairs
introduced by Jae-ho Lee \cite{jhl}.

\section{Preliminaries}

\noindent
In this section, we review some basic concepts and notation that will be used throughout the paper.
Recall the natural numbers $\mathbb N=\lbrace 0,1,2,\ldots \rbrace$ and
integers $\mathbb Z = \lbrace 0, \pm 1, \pm 2, \ldots \rbrace$.
Recall the field $\mathbb F$ from Section 2. 
  For $a, q  \in \mathbb F$ and $r \in \mathbb N$ define
\begin{align*}
 (a;q)_r = (1-a)(1-aq)(1-aq^2) \cdots (1-aq^{r-1}).
 \end{align*}
 We interpret $(a;q)_0=1$. For $a \in \mathbb F$ and $r \in \mathbb N$ define
\begin{align*}
 (a)_r = a(a+1)(a+2)\cdots (a+r-1).
 \end{align*}
 We interpret $(a)_0=1$.
Let $\lambda$ denote an indeterminate. The algebra $\mathbb F \lbrack \lambda \rbrack$
consists of the polynomials in $\lambda$ that have all coefficients in $\mathbb F$.
Fix an integer $d\geq 1$, and let $V$ denote a vector space with dimension $d+1$.
Let ${\rm End}(V)$ denote the algebra consisting of the $\mathbb F$-linear maps from $V$ to $V$.
Next we recall how each basis $\lbrace v_i \rbrace_{i=0}^d$ of $V$
yields an algebra isomorphism ${\rm End}(V) \to {\rm Mat}_{d+1}(\mathbb F)$.
For $A \in {\rm End}(V)$ and $X \in {\rm Mat}_{d+1}(\mathbb F)$, we say that
{\it $X$ represents $A$ with respect to $\lbrace v_i \rbrace_{i=0}^d$} whenever $Av_j = \sum_{i=0}^d X_{i,j}v_i$ for $0 \leq j \leq d$.
The isomorphism sends $A$ to the unique matrix in ${\rm Mat}_{d+1}(\mathbb F)$ that represents $A$
with respect to $\lbrace v_i \rbrace_{i=0}^d$.
For $A \in {\rm End}(V)$, we say that $A$ is {\it diagonalizable} whenever $V$ is spanned by the eigenspaces of $A$.
We say that $A$ is {\it multiplicity-free} whenever $A$ is diagonalizable, and each eigenspace of $A$ has dimension one.
Assume that $A$ is multiplicity-free, and let $\lbrace V_i \rbrace_{i=0}^d$ denote an ordering of the eigenspaces of $A$.
The sum $V=\sum_{i=0}^d V_i$ is direct.
For $0 \leq i \leq d$ let $\theta_i \in \mathbb F$ denote the eigenvalue of $A$ for $V_i$. By construction, the scalars $\lbrace \theta_i \rbrace_{i=0}^d$
are mutually distinct.
For $0 \leq i \leq d$ define $E_i \in {\rm End}(V)$ such that $(E_i-I)V_i=0$ and $E_iV_j=0$ if $i \not=j$ $(0 \leq j\leq d)$.
Thus $E_i$ is the projection $V \to V_i$.
We call $E_i$ the {\it primitive idempotent} of $A$ associated with $V_i$ (or $\theta_i$). We have
(i) $E_i E_j = \delta_{i,j} E_i$ $(0 \leq i,j\leq d)$;
(ii) $I = \sum_{i=0}^d E_i$;
(iii) $V_i = E_iV$ $(0 \leq i \leq d)$;
(iv) ${\rm tr}(E_i) = 1$ $(0 \leq i \leq d)$;
(v) $A=\sum_{i=0}^d \theta_i E_i$;
(vi) $AE_i = \theta_i E_i = E_i A$ $(0 \leq i \leq d)$.
Moreover
\begin{align}        \label{eq:defEi}
  E_i=\prod_{\stackrel{0 \leq j \leq d}{j \neq i}}
          \frac{A-\theta_jI}{\theta_i-\theta_j} \qquad \qquad (0 \leq i \leq d).
\end{align}
Let $M$ denote the subalgebra of ${\rm End}(V)$ generated by $A$. The vector space $M$ has a basis $\lbrace A^i\rbrace_{i=0}^d$, and also $0 = \prod_{i=0}^d (A-\theta_i I)$.
Moreover, the elements
 $\lbrace E_i \rbrace_{i=0}^d$ form a basis for the vector space $M$.
 Pick scalars $s,t \in \mathbb F$ with $s\not=0$. The map $sA+tI$ is multiplicity-free, with eigenvalues $\lbrace s \theta_i + t\rbrace_{i=0}^d$.
 For $0 \leq i \leq d$, the map $E_i$ is the primitive idempotent of $sA+tI$ associated with  $s\theta_i + t$.
 Abbreviate $n=d+1$. A scalar $q \in \mathbb F$ is called a {\it primitive $n^{\rm th}$ root of unity} whenever $q^n=1$ and $q^i \not=1$
 for $1 \leq i \leq d$. If $q \in \mathbb F$ is a primitive $n^{\rm th}$ root of unity, then in the algebra ${\mathbb F}\lbrack \lambda \rbrack$,
 \begin{align*}
 \lambda^n  - 1 = (\lambda-1)(\lambda-q) \cdots (\lambda- q^d).
 \end{align*}
 If ${\rm Char}({\mathbb F})= n$, then  in the algebra ${\mathbb F}\lbrack \lambda \rbrack$,
 \begin{align*}
 \lambda^n  - \lambda = \lambda(\lambda-1)(\lambda-2) \cdots (\lambda- d).
 \end{align*}
 This fact is a version of Fermat's little theorem \cite[Theorem 1.50]{rotman}.
  For $i,j \in \mathbb Z$ we say that $i\equiv j$ (mod $n$) whenever $n$ divides $i-j$.



\section{The proof of Theorem \ref{thm:main}}

\noindent In this section, our goal  is to prove Theorem \ref{thm:main}.
Throughout this section, we fix an integer $d\geq 1$, a vector space $V$ with dimension $d+1$, and a circular bidiagonal pair 
$A, A^*$  on $V$. 
\medskip

\noindent The following result is a special case of \cite[Lemma~2.1]{godjali2}; we will give a short proof for the sake of completeness.
\begin{lemma} \label{lem:mf} {\rm (See \cite[Lemma~2.1]{godjali2}.)}  Each of $A, A^*$ is multiplicity-free.
\end{lemma}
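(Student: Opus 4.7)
The plan is to show that each of $A, A^*$ is diagonalizable and has $d+1$ distinct eigenvalues, hence is multiplicity-free. Diagonalizability is immediate from Definition \ref{def:cbp}: condition (ii) gives a basis in which $A$ is diagonal, so $A$ is diagonalizable; similarly condition (i) shows $A^*$ is diagonalizable.

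The heart of the argument is to produce a cyclic vector. Fix the basis $v_0, v_1, \ldots, v_d$ of condition (i), with respect to which $A$ is circular bidiagonal. Writing out the action explicitly, I get
\begin{align*}
A v_j &= A_{j,j} v_j + A_{j+1,j} v_{j+1} \qquad (0 \leq j \leq d-1), \\
A v_d &= A_{0,d} v_0 + A_{d,d} v_d,
\end{align*}
where by the definition of circular bidiagonal the subdiagonal entries $A_{j+1,j}$ ($0 \leq j \leq d-1$) are all nonzero. An induction on $j$ then shows that $v_j \in \mathrm{span}\{v_0, A v_0, A^2 v_0, \ldots, A^j v_0\}$ for $0 \leq j \leq d$: at each step $v_{j+1}$ is solved out of $A v_j$ using that $A_{j+1,j} \neq 0$. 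Consequently $v_0$ is a cyclic vector for $A$, and the minimal polynomial of $A$ has degree at least $d+1$. Since this minimal polynomial divides the characteristic polynomial, which has degree $d+1 = \dim V$, the two polynomials coincide (up to sign).

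Now I combine the two facts: $A$ is diagonalizable, so its minimal polynomial is a product of distinct linear factors; and its minimal polynomial has degree $d+1$. Hence $A$ has $d+1$ distinct eigenvalues, each with a one-dimensional eigenspace, so $A$ is multiplicity-free. Applying the same reasoning to $A^*$ using condition (ii) in place of (i) shows that $A^*$ is multiplicity-free as well.

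No serious obstacle is anticipated; the only thing to check carefully is the production of the cyclic vector from the circular bidiagonal shape, and in particular that the top-right entry $A_{0,d}$ plays no role in the induction—only the subdiagonal entries are used, and they are nonzero by definition.
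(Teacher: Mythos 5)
Your proof is correct and follows essentially the same route as the paper: both arguments establish that the minimal polynomial of $A$ has degree $d+1$ by exploiting the nonzero subdiagonal entries of the circular bidiagonal matrix, and then combine this with diagonalizability to conclude there are $d+1$ distinct eigenvalues. The only cosmetic difference is that you package the degree bound via a cyclic vector $v_0$, whereas the paper shows directly that the powers $\{A^r\}_{r=0}^{d}$ are linearly independent by examining the entries of $B^r$; these are equivalent observations.
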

\begin{proof} We first consider $A$. The map $A$ is diagonalizable by Definition \ref{def:cbp}(ii);  we show that each eigenspace of $A$
has dimension one. To do this, it suffices to show that $A$ has $d+1$ eigenspaces. Let $\lbrace v_i \rbrace_{i=0}^d$
denote a basis for $V$ that satisfies Definition \ref{def:cbp}(i). Let the matrix $B \in {\rm Mat}_{d+1}(\mathbb F)$  represent $A$
with respect to $\lbrace v_i \rbrace_{i=0}^d$. By construction, $B$ is circular bidiagonal. In particular, for $B$ each entry on the
subdiagonal is nonzero and each entry below the subdiagonal is zero. For $0 \leq r \leq d$ we examine the entries of $B^r$.
For $0 \leq i,j\leq d$ the $(i,j)$-entry of $B^r$ is nonzero if $i-j=r$, and zero if $i-j>r$. Therefore, the matrices $\lbrace B^r \rbrace_{r=0}^d$
are linearly independent. By this and linear algebra, the maps $\lbrace A^r \rbrace_{r=0}^d$ are linearly independent. Consequently,
the minimal polynomial of $A$ has degree $d+1$. This minimal polynomial has no repeated roots, since
 $A$ is diagonalizable. Therefore, $A$ has $d+1$ distinct eigenvalues and hence $d+1$ eigenspaces. We have shown that $A$ is multiplicity-free. One similarly shows that
 $A^*$ is multiplicity-free.
\end{proof}

\begin{definition} \label{def:MMs} \rm Let $M$ (resp. $M^*$) denote the subalgebra of ${\rm End}(V)$ generated by $A$ (resp. $A^*$).
\end{definition}
\noindent Note that $\lbrace A^i \rbrace_{i=0}^d$
is a basis for $M$, and  $\lbrace (A^*)^i \rbrace_{i=0}^d$
is a basis for $M^*$. 
\medskip

\begin{definition} \label{def:standard} \rm Let $\lbrace E_i \rbrace_{i=0}^d$ 
(resp. $\lbrace E^*_i \rbrace_{i=0}^d$)
denote an ordering of the primitive idempotents of $A$ (resp. $A^*$). For $0 \leq i \leq d$
let $0 \not=v_i \in E_iV$ and $0 \not=v^*_i \in E^*_iV$. Note that $\lbrace v_i\rbrace_{i=0}^d$ (resp. $\lbrace v^*_i\rbrace_{i=0}^d$) is a basis for $V$. The ordering $\lbrace E_i \rbrace_{i=0}^d$  
(resp. $\lbrace E^*_i \rbrace_{i=0}^d$)
is
called {\it standard} whenever the basis $\lbrace v_i \rbrace_{i=0}^d$ (resp. $\lbrace v^*_i \rbrace_{i=0}^d$) satisfies Definition \ref{def:cbp}(ii) (resp. Definition \ref{def:cbp}(i)).
\end{definition}

\noindent Next we explain how the  standard orderings in Definition \ref{def:standard} are not unique.
In this explanation, we discuss the primitive idempotents of $A$; a similar discussion applies to the primitive idempotents of $A^*$.

\begin{definition}\label{def:to} Let $E$ and $F$ denote  primitive idempotents of $A$.  Let us write $E\to F$ whenever there exists
$\alpha \in \mathbb F$ such that $(A^*-\alpha I) EV=FV$.
\end{definition}

\begin{lemma} \label{lem:suc} For every primitive idempotent $E$ of $A$, there exists a unique primitive idempotent $F$ of $A$ such that
 $E\to F$. Moreover $E \not=F$.
\end{lemma}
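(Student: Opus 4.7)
The plan is to read $F$ directly off the matrix representing $A^*$ in a basis coming from a standard ordering of the primitive idempotents of $A$.

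First I would invoke Lemma \ref{lem:mf} and Definition \ref{def:standard} to fix a standard ordering $\lbrace E_i\rbrace_{i=0}^d$ of the primitive idempotents of $A$ (its existence is guaranteed by Definition \ref{def:cbp}(ii)), and pick $0\ne v_i\in E_iV$ for each $i$. By construction, the matrix representing $A^*$ with respect to the basis $\lbrace v_i\rbrace_{i=0}^d$ is circular bidiagonal. Setting $n=d+1$ and reading all indices modulo $n$, this produces
\begin{equation*}
A^* v_j = \alpha_j v_j + \beta_j v_{j+1}, \qquad \beta_j\ne 0, \qquad 0\le j\le d,
\end{equation*}
for some $\alpha_j\in\mathbb F$; here $\beta_j$ is the subdiagonal entry in column $j$ when $j<d$ and the top-right corner entry when $j=d$.

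Next, given a primitive idempotent $E$ of $A$, I would write $E=E_j$ (so $EV=\mathbb F v_j$) and compute, for an arbitrary $\alpha\in\mathbb F$,
\begin{equation*}
(A^*-\alpha I)v_j = (\alpha_j-\alpha)v_j + \beta_j v_{j+1}.
\end{equation*}
The relation $E\to F$ from Definition \ref{def:to} demands that $(A^*-\alpha I)EV$ coincide with a $1$-dimensional $A$-eigenspace $FV$, so the right-hand side above must be a nonzero $A$-eigenvector. Since $v_j$ and $v_{j+1}$ lie in distinct eigenspaces of $A$ and $\beta_j\ne 0$, this forces $\alpha-\alpha_j=0$; then $(A^*-\alpha_j I)v_j=\beta_j v_{j+1}\in E_{j+1}V$, so $F=E_{j+1}$ is the unique choice. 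This yields both existence and uniqueness. Finally $F\ne E$, since $j+1\not\equiv j\pmod n$ because $n\ge 2$.

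The essential content is that each column of the circular bidiagonal matrix of $A^*$ contains exactly two nonzero entries, so there is exactly one value of $\alpha$ that can collapse the image into a single $A$-eigenspace. I do not expect a real obstacle here; the one bookkeeping point is the wrap-around at $j=d$, which is handled uniformly by the corner entry $\beta_d\ne 0$, so no separate case is required.
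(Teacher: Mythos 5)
Your proof is correct and is essentially a detailed elaboration of the paper's one-line argument ("$A^*$ acts on the eigenspaces of $A$ in a circular bidiagonal fashion"): you read the action of $A^*$ off the circular bidiagonal matrix coming from a standard ordering and observe that exactly one shift $\alpha=\alpha_j$ collapses $(A^*-\alpha I)E_jV$ into a single eigenspace, namely $E_{j+1}V$. The index bookkeeping modulo $n=d+1$ and the use of $d\ge 1$ to get $E\ne F$ are both handled correctly.
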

\begin{proof} Since $A^*$ acts on the eigenspaces of $A$ in a  circular bidiagonal fashion.
\end{proof}

\begin{lemma} \label{lem:cycle} Let $\lbrace E_i \rbrace_{i=0}^d$ denote an ordering of the primitive idempotents of $A$. This ordering is standard if and only if
$E_0 \to E_1 \to \cdots \to E_d \to E_0$.
\end{lemma}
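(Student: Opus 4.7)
The plan is to recognize the chain condition as an abstract encoding of the nonzero-entry pattern of the matrix of $A^*$ in any eigenbasis of $A$. Fix vectors $0 \neq v_i \in E_i V$ for $0 \leq i \leq d$; by Lemma \ref{lem:mf} each $E_i V$ is one-dimensional, so $\{v_i\}_{i=0}^d$ is a basis that automatically diagonalizes $A$. Hence by Definition \ref{def:standard} the ordering $\{E_i\}_{i=0}^d$ is standard if and only if the matrix $B^*$ representing $A^*$ in the basis $\{v_i\}_{i=0}^d$ is circular bidiagonal. For the forward direction, assume $B^*$ is circular bidiagonal. Reading off the $j$-th column for $0 \leq j \leq d-1$ yields $A^* v_j = B^*_{j,j} v_j + B^*_{j+1,j} v_{j+1}$ with $B^*_{j+1,j} \neq 0$, so $(A^* - B^*_{j,j} I) E_j V = E_{j+1} V$, that is, $E_j \to E_{j+1}$; the $j=d$ column $A^* v_d = B^*_{d,d} v_d + B^*_{0,d} v_0$ with $B^*_{0,d} \neq 0$ gives $E_d \to E_0$ analogously.

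For the converse, assume the chain $E_0 \to E_1 \to \cdots \to E_d \to E_0$ holds, and set $n = d+1$. By Definition \ref{def:to}, choose scalars $\alpha_j \in \mathbb F$ with $(A^* - \alpha_j I) E_j V = E_{(j+1) \bmod n} V$ for $0 \leq j \leq d$. Since both source and target are one-dimensional and the image is nonzero, there exist nonzero scalars $c_j \in \mathbb F$ such that $A^* v_j = \alpha_j v_j + c_j v_{(j+1) \bmod n}$. Consequently $B^*$ has diagonal entries $\alpha_0, \ldots, \alpha_d$, nonzero subdiagonal entries $c_0, \ldots, c_{d-1}$, nonzero top-right entry $c_d$, and zeros elsewhere, so it is circular bidiagonal, and the ordering is standard.

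The argument is essentially a dictionary between the arrow notation of Definition \ref{def:to} and the nonzero-entry pattern defining circular bidiagonality, so there is no substantive obstacle. The only point that warrants any attention is that the off-diagonal contributions $c_j v_{(j+1) \bmod n}$ do not secretly collapse into the diagonal term; this is guaranteed by $E_j \neq E_{(j+1) \bmod n}$, which is part of Lemma \ref{lem:suc}, ensuring that $v_{(j+1) \bmod n}$ is genuinely linearly independent from $v_j$ and hence that the subdiagonal and top-right entries of $B^*$ are accounted for correctly.
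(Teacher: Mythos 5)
Your proof is correct and takes essentially the same route as the paper, whose entire proof is ``By Definitions \ref{def:cbp} and \ref{def:standard}''; you have simply spelled out the routine dictionary between the arrow relation of Definition \ref{def:to} and the nonzero-entry pattern of a circular bidiagonal matrix, in both directions. One small cosmetic point: in the converse direction the distinctness $E_j \neq E_{(j+1)\bmod n}$ already follows from $\lbrace E_i \rbrace_{i=0}^d$ being an ordering of the mutually distinct primitive idempotents, so the appeal to Lemma \ref{lem:suc} there is unnecessary (though harmless).
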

\begin{proof} By Definitions \ref{def:cbp} and \ref{def:standard}.
\end{proof}

\begin{lemma} There are exactly $d+1$ standard orderings of the primitive idempotents of $A$. 
\end{lemma}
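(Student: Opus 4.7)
The plan is to read the standard orderings as the cyclic rotations of a single cycle in the functional graph of the ``successor'' map from Lemma \ref{lem:suc}.

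First, by Lemma \ref{lem:suc}, for each primitive idempotent $E$ of $A$ there exists a unique primitive idempotent $\sigma(E)$ of $A$ with $E \to \sigma(E)$, and $\sigma(E) \neq E$. This defines a function $\sigma$ on the set $\mathcal{E}$ of primitive idempotents of $A$, a set of cardinality $d+1$ by Lemma \ref{lem:mf}.

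Next, I would reformulate Lemma \ref{lem:cycle}: an ordering $\lbrace E_i \rbrace_{i=0}^{d}$ of $\mathcal{E}$ is standard if and only if $\sigma(E_i) = E_{i+1 \bmod (d+1)}$ for $0 \le i \le d$. Equivalently, the sequence $E_0, \sigma(E_0), \sigma^2(E_0), \ldots, \sigma^d(E_0)$ is an enumeration of $\mathcal{E}$, and $\sigma^{d+1}(E_0) = E_0$. Thus a standard ordering amounts to choosing a starting point of a Hamiltonian cycle for $\sigma$ on $\mathcal{E}$.

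Now, Definition \ref{def:cbp}(ii) together with Definition \ref{def:standard} guarantees the existence of at least one standard ordering, so by Lemma \ref{lem:cycle} the iterates $E_0, \sigma(E_0), \ldots, \sigma^d(E_0)$ are pairwise distinct and $\sigma^{d+1}(E_0) = E_0$. Since $\mathcal{E}$ has exactly $d+1$ elements, these iterates exhaust $\mathcal{E}$. Consequently every element of $\mathcal{E}$ has the form $\sigma^i(E_0)$ for a unique $0 \le i \le d$, and the functional graph of $\sigma$ is a single $(d+1)$-cycle. Therefore the standard orderings are in bijection with the $d+1$ choices of starting vertex on this cycle (two rotations that agree as cyclic sequences but start at different vertices give distinct orderings), and there are exactly $d+1$ of them. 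The only mild point to check is that the $d+1$ rotations really yield $d+1$ \emph{distinct} orderings as tuples, which is immediate from the pairwise distinctness of the iterates; no genuine obstacle arises.
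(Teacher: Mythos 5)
Your proof is correct and follows essentially the same route as the paper: the paper's proof likewise combines Lemma \ref{lem:suc} and Lemma \ref{lem:cycle} to conclude that each primitive idempotent $E$ is the starting point $E_0$ of exactly one standard ordering, giving $d+1$ orderings in all. You merely spell out in more detail the intermediate fact that the successor map is a single $(d+1)$-cycle, which the paper leaves implicit.
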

\begin{proof}  By Lemmas \ref{lem:suc} and \ref{lem:cycle}, for every primitive idempotent $E$ of $A$, there exists a unique standard ordering $\lbrace E_i \rbrace_{i=0}^d$
of the primitive idempotents of $A$ such that $E=E_0$. The result follows.
\end{proof}


\begin{definition} \label{def:eigval} \rm For the rest of this section, we fix a standard ordering $\lbrace E_i \rbrace_{i=0}^d$ of the primitive
idempotents of $A$, and a standard ordering $\lbrace E^*_i\rbrace_{i=0}^d$ of the primitive idempotents of $A^*$.
 For $0 \leq i \leq d$ let $\theta_i$ (resp. $\theta^*_i$) denote the eigenvalue of $A$ (resp. $A^*$) for $E_i$ (resp. $E^*_i$).
\end{definition}
\noindent Note that $\lbrace E_i \rbrace_{i=0}^d$
is a basis for $M$, and  $\lbrace E^*_i \rbrace_{i=0}^d$
is a basis for $M^*$. 
\medskip

\noindent We have a comment about the subscript $i$  in $E_i$, $E^*_i$, $\theta_i$, $\theta^*_i$.
Due to the circular nature of a circular bidiagonal pair, our calculations involving these subscripts will be carried out modulo $n$, where $n=d+1$. The details are explained in the following definition.
\begin{definition} \rm
For $X \in \lbrace E, E^*, \theta, \theta^*\rbrace$ and $ i \in \mathbb Z$, we define $X_i=X_r$ where $0 \leq r \leq d$ and $i \equiv r$ (mod $n$).
\end{definition}

\begin{definition} \label{def:ai} \rm For $0 \leq i \leq d$ define
\begin{align}
\label{eq:ai}
 a_i = {\rm tr}(AE^*_i), \qquad \qquad a^*_i = {\rm tr}(A^*E_i).
 \end{align}
\end{definition}

\begin{lemma} \label{lem:trace} The following $\rm (i), (ii)$ hold for $0 \leq i \leq d$:
\begin{enumerate}
\item[\rm (i)]
${\rm tr} (A E^*_i) = {\rm tr}(E^*_i A E^*_i) = {\rm tr}(E^*_i A)$;
\item[\rm (ii)]
${\rm tr} (A^* E_i) = {\rm tr}(E_i A^* E_i) = {\rm tr}(E_i A^*)$.
\end{enumerate}
\end{lemma}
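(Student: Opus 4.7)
The plan is to observe that this lemma is a direct consequence of two elementary facts about the trace and about primitive idempotents: the cyclic property ${\rm tr}(XY)={\rm tr}(YX)$ for any $X,Y\in{\rm End}(V)$, and the idempotence relation $(E^*_i)^2=E^*_i$ coming from the fact that $E^*_i$ is a primitive idempotent of $A^*$ (and analogously $(E_i)^2=E_i$).

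For part (i), the equality ${\rm tr}(AE^*_i)={\rm tr}(E^*_iA)$ is immediate from the cyclic property of the trace. For the middle equality, I would compute
\begin{align*}
{\rm tr}(E^*_i A E^*_i) = {\rm tr}(E^*_i E^*_i A) = {\rm tr}(E^*_i A),
\end{align*}
where the first step uses trace cyclicity (moving the rightmost $E^*_i$ to the left) and the second step uses $(E^*_i)^2=E^*_i$. Part (ii) is proved in exactly the same way, with the roles of $A,E^*_i$ interchanged with $A^*,E_i$, using $(E_i)^2=E_i$ in place of $(E^*_i)^2=E^*_i$. There is no substantive obstacle; the lemma is recorded here only because the expressions ${\rm tr}(E^*_iAE^*_i)$ and ${\rm tr}(E_iA^*E_i)$ will be convenient in subsequent calculations (for instance, in computing the scalars $a_i,a^*_i$ of Definition \ref{def:ai} by expanding $A$ or $A^*$ in the basis $\{E^*_i\}_{i=0}^d$ or $\{E_i\}_{i=0}^d$).
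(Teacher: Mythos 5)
Your proposal is correct and matches the paper's own proof exactly: both rely on the cyclic property ${\rm tr}(XY)={\rm tr}(YX)$ together with the idempotence $(E^*_i)^2=E^*_i$ (respectively $(E_i)^2=E_i$). Your version just writes out the one-line computation that the paper leaves implicit.
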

\begin{proof} (i) By linear algebra, ${\rm tr}(XY)= {\rm tr}(YX)$ for all $X, Y \in {\rm End}(V)$. The result follows from this and $(E^*_i)^2 = E^*_i$.
\\
\noindent (ii) Similar to the proof of (i).
\end{proof}

\begin{lemma} \label{lem:EAEa} For $0 \leq i \leq d$ we have
\begin{align*}
E^*_i A E^*_i = a_i E^*_i, \qquad \qquad  E_i A^* E_i = a^*_i E_i.
\end{align*}
\end{lemma}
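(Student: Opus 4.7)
The plan is to exploit the fact that $A^*$ is multiplicity-free, so each eigenspace $E^*_i V$ has dimension one. This gives strong control over operators of the form $E^*_i X E^*_i$.

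First I would observe that $E^*_i A E^*_i$ sends $V$ into $E^*_i V$ via the leftmost factor, and annihilates $E^*_j V$ for $j \neq i$ via the rightmost factor (using $E^*_i E^*_j = \delta_{i,j} E^*_i$). Combined with the fact that $E^*_i V$ is one-dimensional by Lemma \ref{lem:mf}, this forces $E^*_i A E^*_i$ to be a scalar multiple of $E^*_i$, say $E^*_i A E^*_i = c\, E^*_i$ for some $c \in \mathbb F$. Indeed, any operator on $V$ whose image lies in $E^*_i V$ and which is zero on $\sum_{j\neq i} E^*_j V$ must be of this form, because $E^*_i$ itself is nonzero on $E^*_i V$ and zero elsewhere, and the one-dimensionality of $E^*_i V$ leaves only one free parameter.

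Next I would identify the scalar $c$ by taking traces. Using property (iv) from the preliminaries, ${\rm tr}(E^*_i) = 1$, so ${\rm tr}(E^*_i A E^*_i) = c$. On the other hand, Lemma \ref{lem:trace}(i) yields ${\rm tr}(E^*_i A E^*_i) = {\rm tr}(A E^*_i) = a_i$ by Definition \ref{def:ai}. Hence $c = a_i$, proving the first equation. The second equation follows by the same argument applied to the dual circular bidiagonal pair $A^*, A$, invoking Lemma \ref{lem:mf} for $A$ and Lemma \ref{lem:trace}(ii).

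No serious obstacle is anticipated; the only subtlety is ensuring one invokes the multiplicity-freeness of the \emph{appropriate} map in each half of the statement (the $E^*$ side uses multiplicity-freeness of $A^*$, and the $E$ side uses multiplicity-freeness of $A$), both of which are already available from Lemma \ref{lem:mf}.
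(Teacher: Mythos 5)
Your proposal is correct and follows essentially the same route as the paper: the paper likewise notes that $E^*_i$ has rank one, so $E^*_i\,{\rm End}(V)\,E^*_i$ is spanned by $E^*_i$, forcing $E^*_i A E^*_i = \alpha_i E^*_i$, and then identifies $\alpha_i = a_i$ by taking traces via Lemma \ref{lem:trace} and ${\rm tr}(E^*_i)=1$. Your spelled-out justification of the rank-one step (image in $E^*_iV$, kernel containing the other eigenspaces, one-dimensionality of $E^*_iV$) is just a more explicit version of the same argument.
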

\begin{proof} We verify the equation on the left. Abbreviate $\mathcal A = {\rm End}(V)$. The primitive idempotent $E^*_i$ has rank one, so 
$E^*_i$ is a basis for $E^*_i {\mathcal A} E^*_i$. Therefore, there exists $\alpha_i \in \mathbb F$ such that $E^*_i A E^*_i = \alpha_i E^*_i$.
In this equation, take the trace of each side and use \eqref{eq:ai} along with Lemma \ref{lem:trace}(i) and  ${\rm tr}(E^*_i)=1$ to obtain $a_i = \alpha_i$. We have
verified the equation on the left. The equation on the right is similarly verified.
\end{proof}

\begin{lemma} \label{lem:aiCirc} For $0 \leq i \leq d$ we have
\begin{align*}
(A- a_i I) E^*_iV = E^*_{i+1}V, \qquad \qquad (A^*- a^*_i I) E_iV= E_{i+1}V.
\end{align*}
\end{lemma}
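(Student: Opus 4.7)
The plan is to combine the circular bidiagonal matrix structure afforded by the standard ordering with the trace identity of Lemma \ref{lem:EAEa}. The first equation is the substantive one; the second follows by the dual argument with the roles of $A$ and $A^*$ interchanged.

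To prove $(A - a_i I) E_i^* V = E_{i+1}^* V$, I would pick a nonzero $v_i^* \in E_i^* V$ for each $i$. By Definition \ref{def:standard} together with Definition \ref{def:cbp}(i), the basis $\{v_i^*\}_{i=0}^d$ represents $A$ as a circular bidiagonal matrix. Reading off column $i$ of this matrix yields $A v_i^* = c_i v_i^* + d_i v_{i+1}^*$ for some scalars $c_i, d_i \in \mathbb F$ with $d_i \neq 0$ (subscripts read modulo $n$, so column $d$ contributes $c_d v_d^* + d_d v_0^*$ with $d_d \neq 0$). The key step is to identify $c_i = a_i$. For this, I would apply $E_i^*$ on the left: using $E_i^* v_i^* = v_i^*$ and $E_i^* v_{i+1}^* = 0$, one obtains $E_i^* A E_i^* v_i^* = c_i v_i^*$, while Lemma \ref{lem:EAEa} gives $E_i^* A E_i^* v_i^* = a_i v_i^*$. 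Since $v_i^* \neq 0$, this forces $c_i = a_i$. Consequently $(A - a_i I) v_i^* = d_i v_{i+1}^*$ is a nonzero scalar multiple of $v_{i+1}^*$, which gives $(A - a_i I) E_i^* V = E_{i+1}^* V$.

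For the second equation, I would run the same argument on the dual circular bidiagonal pair $A^*, A$ (Definition \ref{def:dual}), using the standard ordering $\{E_i\}_{i=0}^d$ and the basis $\{v_i\}_{i=0}^d$ guaranteed by Definition \ref{def:cbp}(ii), now invoking the second half of Lemma \ref{lem:EAEa}. There is no real obstacle here; the entire content of the lemma is that the diagonal entry of $A$ in the circular bidiagonal basis is forced by Lemma \ref{lem:EAEa} to equal the trace scalar $a_i$, and symmetrically for $A^*$.
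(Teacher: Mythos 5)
Your proof is correct and follows essentially the same route as the paper: both first obtain $(A-\alpha_i I)E^*_iV=E^*_{i+1}V$ for some scalar $\alpha_i$ from the circular bidiagonal shape of $A$ on the standard eigenbasis of $A^*$ (the paper packages this as Definition \ref{def:to} and Lemma \ref{lem:cycle}, while you read it directly off the matrix columns), and then both pin down $\alpha_i=a_i$ by applying $E^*_i$ and invoking Lemma \ref{lem:EAEa}. The dual equation is handled the same way in both.
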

\begin{proof} We verifiy the equation on the left.  By Definition \ref{def:to} and Lemma \ref{lem:cycle}, there exists $\alpha_i \in \mathbb F$
such that  $(A-\alpha_i I ) E^*_iV=E^*_{i+1}V$. In this equation,  apply $E^*_i$ to each side and evaluate the result using
Lemma \ref{lem:EAEa};  this yields
\begin{align*}
0 = E^*_i (A-\alpha_i I) E^*_iV = (a_i - \alpha_i) E^*_iV.
\end{align*}
Of course $E^*_iV \not=0$, so $\alpha_i = a_i$. We have verified the equation on the left. The equation on the right is similarly verified.
\end{proof}

\begin{lemma} \label{lem:EAE} The following $\rm (i), (ii)$ hold for $0 \leq i,j\leq d$.
\begin{enumerate}
\item[\rm (i)]  ${\displaystyle{
E^*_iAE^*_j = \begin{cases}
 0, & {\mbox{\rm if $i-j \not\in \lbrace 0,1\rbrace$ (mod $n$)}}; \\
\not=0, &{\mbox{\rm if $i-j \equiv 1$ (mod $n$)}}.
\end{cases}
}}$
\item[\rm (ii)] 
 ${\displaystyle{
E_iA^*E_j = \begin{cases}
 0, & {\mbox{\rm if $i-j \not\in \lbrace 0,1\rbrace$ (mod $n$)}}; \\
\not=0, &{\mbox{\rm if $i-j \equiv 1$ (mod $n$)}}.
\end{cases}
}}$
\end{enumerate}
\end{lemma}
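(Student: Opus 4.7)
The plan is to derive part (i) directly from Lemma \ref{lem:aiCirc}, and then obtain part (ii) by invoking the dual circular bidiagonal pair $A^*, A$ of Definition \ref{def:dual}, under which the roles of the $E_i$ and $E^*_i$ are swapped and (i) for the dual becomes (ii) for the original.

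First, I would rewrite Lemma \ref{lem:aiCirc} in the containment form
\begin{equation*}
A\, E^*_j V \;\subseteq\; E^*_j V + E^*_{j+1} V \qquad (0 \leq j \leq d).
\end{equation*}
Relative to the direct sum $V = \bigoplus_{k=0}^d E^*_k V$, this says that $A$ sends the $j$th summand into the $j$th plus $(j{+}1)$st summands, with indices read modulo $n$. Since $E^*_i$ is the projection onto $E^*_i V$ along this decomposition, the composition $E^*_i A E^*_j$ automatically vanishes whenever $i \not\equiv j$ and $i \not\equiv j+1 \pmod{n}$. This disposes of the vanishing case in (i).

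For the nonvanishing case, assume $i - j \equiv 1 \pmod{n}$ and pick any nonzero $v \in E^*_j V$. Lemma \ref{lem:aiCirc} shows that $(A - a_j I)$ maps the one-dimensional space $E^*_j V$ onto the one-dimensional space $E^*_{j+1} V$, hence isomorphically; in particular $(A - a_j I) v$ is a nonzero element of $E^*_i V$. Writing $Av = a_j v + (A - a_j I) v$ and applying $E^*_i$, which annihilates the first summand (since $i \not\equiv j$) and is the identity on the second, gives $E^*_i A v = (A - a_j I) v \neq 0$. As $v = E^*_j v$, this forces $E^*_i A E^*_j \neq 0$.

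I do not foresee any substantive obstacle; the whole argument is a short unpacking of Lemma \ref{lem:aiCirc} together with the orthogonality relations among the idempotents $\lbrace E^*_k \rbrace_{k=0}^d$. The only care required is to read all subscripts modulo $n = d+1$, a convention already fixed earlier in this section.
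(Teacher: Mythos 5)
Your proposal is correct and follows the same route as the paper, whose proof of this lemma is simply a citation of Lemma \ref{lem:aiCirc}; you have merely written out the unpacking (projection along the decomposition $V=\bigoplus_k E^*_kV$ for the vanishing case, and the fact that $(A-a_jI)$ carries the one-dimensional space $E^*_jV$ onto $E^*_{j+1}V$ for the nonvanishing case). Obtaining (ii) by the symmetric argument for $A^*$ and the $E_i$ is likewise exactly what the paper intends.
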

\begin{proof} By Lemma \ref{lem:aiCirc}.
\end{proof}

\noindent The  following generalization of Lemma \ref{lem:EAE} will be useful.

\begin{lemma} \label{lem:higher} The following $\rm (i), (ii)$ hold for $0 \leq i,j, r\leq d$.
\begin{enumerate}
\item[\rm (i)]  ${\displaystyle{
E^*_iA^rE^*_j = \begin{cases}
 0, & {\mbox{\rm if $i-j \not\in \lbrace 0,1,\ldots, r\rbrace$ (mod $n$)}}; \\
\not=0, &{\mbox{\rm if $i-j \equiv r$ (mod $n$)}}.
\end{cases}
}}$
\item[\rm (ii)] 
 ${\displaystyle{
E_i(A^*)^rE_j = \begin{cases}
 0, & {\mbox{\rm if $i-j \not\in \lbrace 0,1, \ldots, r\rbrace$ (mod $n$)}}; \\
\not=0, &{\mbox{\rm if $i-j \equiv r$ (mod $n$)}}.
\end{cases}
}}$
\end{enumerate}
\end{lemma}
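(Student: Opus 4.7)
The plan is to prove part (i) by induction on $r$; part (ii) will then follow by applying part (i) to the dual circular bidiagonal pair $A^*, A$ (as in Definition \ref{def:dual}), in which the roles of $E_i$ and $E^*_i$ are swapped.

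The base cases $r=0$ and $r=1$ are immediate: for $r=0$ we have $E^*_iE^*_j=\delta_{ij}E^*_i$, and for $r=1$ the claim is exactly Lemma \ref{lem:EAE}(i). For the inductive step with $r\geq 2$, I would insert the resolution of the identity $I=\sum_{k=0}^{d}E^*_k$ between $A$ and $A^{r-1}$, writing
\begin{align*}
E^*_iA^rE^*_j \;=\; \sum_{k=0}^{d} E^*_iAE^*_k\cdot E^*_kA^{r-1}E^*_j.
\end{align*}
By Lemma \ref{lem:EAE}(i), only the terms with $k\equiv i$ or $k\equiv i-1$ (mod $n$) survive, and Lemma \ref{lem:EAEa} collapses the first to $a_i E^*_iA^{r-1}E^*_j$. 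Thus
\begin{align*}
E^*_iA^rE^*_j \;=\; a_i\, E^*_iA^{r-1}E^*_j \;+\; E^*_iAE^*_{i-1}\cdot E^*_{i-1}A^{r-1}E^*_j.
\end{align*}

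For the vanishing statement, the induction hypothesis tells me the first summand is nonzero only when $i-j\in\{0,1,\ldots,r-1\}$ (mod $n$) and the second only when $(i-1)-j\in\{0,1,\ldots,r-1\}$ (mod $n$), i.e.\ $i-j\in\{1,2,\ldots,r\}$ (mod $n$). Their union is $\{0,1,\ldots,r\}$ (mod $n$), which is the desired index set. Note that the hypothesis $r\leq d<n$ guarantees these residues are genuinely distinct, so there is no wrap-around collision.

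For the nonvanishing statement when $i-j\equiv r$ (mod $n$), the first summand vanishes by induction (since $r\not\in\{0,1,\ldots,r-1\}$), leaving
\begin{align*}
E^*_iA^rE^*_j \;=\; E^*_iAE^*_{i-1}\cdot E^*_{i-1}A^{r-1}E^*_j.
\end{align*}
By induction, $E^*_{i-1}A^{r-1}E^*_j\neq 0$, and by Lemma \ref{lem:EAE}(i), $E^*_iAE^*_{i-1}\neq 0$. The main point — and the one subtlety of the argument — is to show this composition is nonzero, not just each factor. Here I exploit the one-dimensionality of each eigenspace: $E^*_{i-1}A^{r-1}E^*_j$ is a nonzero operator whose image lies in the one-dimensional space $E^*_{i-1}V$ and so equals $E^*_{i-1}V$; meanwhile $E^*_iAE^*_{i-1}$ is nonzero and annihilates everything outside $E^*_{i-1}V$, hence is nonzero on $E^*_{i-1}V$. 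Therefore the composite is nonzero, completing the induction and the proof of part (i). Part (ii) follows by the dual argument applied to the circular bidiagonal pair $A^*, A$.
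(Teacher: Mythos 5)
Your proof is correct, and it is exactly the argument the paper has in mind: the paper dismisses Lemma \ref{lem:higher} as ``a routine consequence of Lemma \ref{lem:EAE}," and your induction (inserting $I=\sum_k E^*_k$, keeping only the $k\equiv i$ and $k\equiv i-1$ terms, and using one-dimensionality of the $E^*_kV$ to see that the surviving composite is nonzero) is the standard way to carry out that routine. Your handling of the two subtleties --- no wrap-around collision among the residues $0,1,\ldots,r$ since $r\leq d<n$, and the nonvanishing of the composition of two nonzero rank-one factors --- is exactly what needs to be said.
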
 
\begin{proof} This is a routine consequence of Lemma \ref{lem:EAE}.
\end{proof}

\begin{lemma} \label{lem:prodnz} The following holds for $0 \leq i,j\leq d$:
\begin{enumerate}
\item[\rm (i)] $E_i E^*_j \not=0$;
\item[\rm (ii)] $E^*_i E_j \not=0$.
\end{enumerate}
\end{lemma}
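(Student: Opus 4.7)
My approach would be to reduce part (ii) to part (i) via duality: since $A^*,A$ is again a circular bidiagonal pair by Definition \ref{def:dual}, applying (i) to this dual pair yields (ii). For part (i), I fix indices $i,j$ and work in the eigenbasis $\{v_k\}_{k=0}^d$ of $A$ coming from the standard ordering in Definition \ref{def:eigval}, where $0\neq v_k\in E_kV$. Picking a nonzero $v^*_j\in E^*_jV$ and expanding $v^*_j=\sum_k c_{k,j}v_k$, the identity $E_iv^*_j=c_{i,j}v_i$ shows that $E_iE^*_j=0$ if and only if $c_{i,j}=0$, reducing the problem to proving that every coordinate $c_{k,j}$ is nonzero.

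By Lemma \ref{lem:aiCirc}, for each $k$ there exists a nonzero scalar $\delta_k$ with
\[
A^*v_k=a^*_kv_k+\delta_kv_{k+1}\qquad (0\leq k\leq d),
\]
where indices are taken mod $n$; this captures the circular bidiagonal representation of $A^*$ in the basis $\{v_k\}$. Applying $A^*$ to $v^*_j=\sum_k c_{k,j}v_k$ and comparing with $A^*v^*_j=\theta^*_jv^*_j$ would yield the cyclic recurrence
\[
(\theta^*_j-a^*_k)\,c_{k,j}=\delta_{k-1}\,c_{k-1,j}\qquad (k\in\mathbb{Z}/n\mathbb{Z}).
\]

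The crucial step is to show that $\theta^*_j\neq a^*_k$ for every $k$. I would argue by contradiction: suppose $S:=\{k:a^*_k=\theta^*_j\}$ is nonempty. For each $k\in S$, the recurrence at $k$ gives $\delta_{k-1}c_{k-1,j}=0$, hence $c_{k-1,j}=0$; for each $k\notin S$, the recurrence turns $c_{k,j}$ into a nonzero scalar multiple of $c_{k-1,j}$. Propagating these constraints cyclically around $\mathbb{Z}/n\mathbb{Z}$: between consecutive elements $k,k'\in S$, the values $c_{k,j},c_{k+1,j},\ldots,c_{k'-1,j}$ form a chain of nonzero-multiple relations, while the vanishing $c_{k'-1,j}=0$ forces the whole chain to vanish. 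Doing this on every segment gives $v^*_j=0$, contradicting $v^*_j\neq 0$.

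With $\theta^*_j\neq a^*_k$ for every $k$, the recurrence is nondegenerate, so the $c_{k,j}$ are nonzero scalar multiples of one another; since $v^*_j\neq 0$, each $c_{k,j}$ is nonzero, proving (i), and (ii) follows from the duality reduction. The main obstacle is the cyclic propagation argument ruling out $\theta^*_j=a^*_k$, a phenomenon that distinguishes the circular bidiagonal setting from the ordinary (non-cyclic) bidiagonal one, where an eigenvector can legitimately have vanishing coordinates.
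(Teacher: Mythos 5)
Your proof is correct, but it takes a genuinely different route from the paper's. The paper's argument is a two-line computation: it writes $E_i$ as the degree-$d$ polynomial in $A$ from \eqref{eq:defEi} and sandwiches it as $E^*_{j+d}E_iE^*_j$; by Lemma \ref{lem:higher}(i) every term $E^*_{j+d}A^rE^*_j$ with $r<d$ vanishes while the leading term $E^*_{j+d}A^dE^*_j$ is nonzero, so $E^*_{j+d}E_iE^*_j\neq 0$ and hence $E_iE^*_j\neq 0$. You instead expand an $A^*$-eigenvector in the $A$-eigenbasis and show all of its coordinates are nonzero via the cyclic first-order recurrence $(\theta^*_j-a^*_k)c_{k,j}=\delta_{k-1}c_{k-1,j}$. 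Your crucial intermediate step --- that $\theta^*_j\neq a^*_k$ for all $k$ --- is the dual of Lemma \ref{lem:match}, which the paper only obtains \emph{after} Lemma \ref{lem:prodnz} (via Lemmas \ref{lem:BBasis} and \ref{lem:BBasis2}); your cyclic propagation argument proves it independently, so there is no circularity, and your reduction of (ii) to (i) by duality matches the paper's ``similar'' remark. What your approach buys is more information: it directly establishes that every coordinate of an $A^*$-eigenvector in an $A$-eigenbasis is nonzero (essentially the dual of Lemma \ref{lem:BBasis}(i)) and isolates exactly where the circular structure is used, namely that the recurrence closes up around $\mathbb{Z}/n\mathbb{Z}$ so a single vanishing coordinate would force the whole eigenvector to vanish. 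What it costs is length; the paper's trace-free sandwich argument is shorter and avoids choosing bases altogether.
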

\begin{proof} (i)
Using     \eqref{eq:defEi} and Lemma \ref{lem:higher}(i),
\begin{align*}
E^*_{j+d} E_i E^*_j &= E^*_{j+d} \Biggl( \prod_{\stackrel{0 \leq \ell \leq d}{\ell \neq i}}
          \frac{A-\theta_\ell I}{\theta_i-\theta_\ell} \Biggr)  E^*_j  
      = E^*_{j+d} A^d E^*_j \prod_{\stackrel{0 \leq \ell \leq d}{\ell \neq i}}
          \frac{1}{\theta_i-\theta_\ell}
      \not=0.
\end{align*}
\noindent Therefore $E_i E^*_j\not=0$.
\\
\noindent (ii) Similar to the proof of (i).
\end{proof}

\begin{lemma}\label{lem:basisEE} In each of {\rm (i)--(iv)} below, we give a basis for the vector space ${\rm End}(V)$:
\begin{enumerate}
\item[\rm (i)] $E_i E^*_j$ $(0 \leq i,j\leq d)$;
\item[\rm (ii)] $A^i (A^*)^j $ $(0 \leq i,j\leq d)$;
\item[\rm (iii)] $E^*_i E_j $ $(0 \leq i,j\leq d)$;
\item[\rm (iv)] $(A^*)^i A^j$ $(0 \leq i,j\leq d)$.
\end{enumerate}
\end{lemma}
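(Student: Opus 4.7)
Since $\dim \mathrm{End}(V) = (d+1)^2$ and each of the four displayed sets contains exactly $(d+1)^2$ vectors, in each case it suffices to prove linear independence. The plan is to handle (i) first by a direct ``sandwich'' argument using the orthogonality of the primitive idempotents; then obtain (iii) by the same reasoning with the starred and unstarred roles interchanged; and finally derive (ii) and (iv) by a routine change-of-basis argument from (i) and (iii) respectively.

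For (i), I would suppose that $\sum_{i,j=0}^{d} c_{i,j}\, E_i E^*_j = 0$ for some scalars $c_{i,j} \in \mathbb F$. Fix $k, \ell$ with $0 \leq k, \ell \leq d$, left-multiply the relation by $E_k$, and right-multiply by $E^*_\ell$. Using $E_k E_i = \delta_{k,i} E_i$ and $E^*_j E^*_\ell = \delta_{j,\ell} E^*_j$, every term collapses except $c_{k,\ell} E_k E^*_\ell$, yielding $c_{k,\ell} E_k E^*_\ell = 0$. Lemma \ref{lem:prodnz}(i) gives $E_k E^*_\ell \neq 0$, so $c_{k,\ell} = 0$. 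Thus (i) is linearly independent, hence a basis. Part (iii) follows by the identical argument using Lemma \ref{lem:prodnz}(ii).

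For (ii), I would use the fact that $\{A^i\}_{i=0}^d$ and $\{E_i\}_{i=0}^d$ are both bases of $M$, so there is an invertible matrix $T \in \mathrm{Mat}_{d+1}(\mathbb F)$ with $A^i = \sum_{r=0}^d T_{r,i}\, E_r$; analogously, there is an invertible $T^*$ with $(A^*)^j = \sum_{s=0}^d T^*_{s,j}\, E^*_s$. Multiplying out,
\begin{align*}
A^i (A^*)^j = \sum_{r,s=0}^{d} T_{r,i}\, T^*_{s,j}\, E_r E^*_s,
\end{align*}
so the transition matrix from (i) to (ii) is (up to indexing) the Kronecker product $T \otimes T^*$, which is invertible. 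Hence (ii) is a basis for $\mathrm{End}(V)$. The same argument applied to (iii) yields (iv).

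The whole proof is essentially routine once Lemma \ref{lem:prodnz} is in hand; the only delicate point is the sandwich step, where one must multiply the vanishing linear combination on both sides in order to isolate a single coefficient.
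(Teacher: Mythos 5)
Your proof is correct and follows essentially the same route as the paper: a dimension count plus a sandwich argument $E_k(\cdot)E^*_\ell$ using Lemma \ref{lem:prodnz} for parts (i) and (iii), and a change of basis from the idempotents to the powers for parts (ii) and (iv). The only difference is that you spell out the transition matrix (as a Kronecker product of two invertible matrices) where the paper simply cites the fact that $\lbrace A^i \rbrace_{i=0}^d$ and $\lbrace E_i \rbrace_{i=0}^d$ are both bases of $M$, and similarly for $M^*$.
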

\begin{proof} (i) The dimension of ${\rm End}(V)$ is $(d+1)^2$, and this is the number of vectors listed. Therefore, it suffices to show that the
listed vectors are linearly independent. Assume that
\begin{align*}
0 = \sum_{i=0}^d \sum_{j=0}^d \alpha_{i,j} E_i E^*_j \qquad \qquad (\alpha_{i,j} \in \mathbb F).
\end{align*}
\noindent We show that $\alpha_{r,s} =0$ for $0 \leq r,s\leq d$. Let $r,s$ be given. We have
\begin{align*}
0 = E_r \Biggl( \sum_{i=0}^d \sum_{j=0}^d \alpha_{i,j} E_i E^*_j \Biggr) E^*_s  = \alpha_{r,s} E_r E^*_s.
\end{align*}
We have $E_r E^*_s \not=0$ by Lemma \ref{lem:prodnz}(i), so $\alpha_{r,s}=0$.\\
\noindent (ii)  By (i) and the notes below Definitions \ref{def:MMs}, \ref{def:eigval}.
\\
\noindent (iii), (iv)  Similar to the proof of (i), (ii).
\end{proof}

\noindent The next three lemmas contain results  about $A$ and $\lbrace E^*_i \rbrace_{i=0}^d$; similar results hold for $A^*$ and $\lbrace E_i\rbrace_{i=0}^d$.

\begin{lemma} \label{lem:BBasis} Let $\theta$ denote an eigenvalue of $A$, and let $0 \not=\xi\in V$ denote a corresponding eigenvector.
Then the following {\rm (i)--(iii)} hold:
\begin{enumerate}
\item[\rm (i)] the vector $E^*_i \xi $ is a basis for $E^*_iV$ $(0 \leq i \leq d)$;
\item[\rm (ii)] the vectors $\lbrace E^*_i \xi \rbrace_{i=0}^d$ form a basis for $V$;
\item[\rm (iii)] the basis $\lbrace E^*_i \xi \rbrace_{i=0}^d$ satisfies Definition \ref{def:cbp}(i).
\end{enumerate}
\end{lemma}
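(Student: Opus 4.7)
The plan is to exploit the multiplicity-freeness of both $A$ and $A^*$ (Lemma \ref{lem:mf}), together with the structural results already assembled for the standard ordering.

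First I would fix notation: since $\xi$ is an eigenvector of $A$ for $\theta$, there is a unique index $r$ with $\theta=\theta_r$, and because $A$ is multiplicity-free, $E_rV$ is one-dimensional and spanned by $\xi$, so $E_r\xi=\xi$ while $E_s\xi=0$ for $s\neq r$.

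For part (i), I would use the identity $E^*_i\xi = E^*_iE_r\xi$. By Lemma \ref{lem:prodnz}(ii), $E^*_iE_r\neq 0$. Because $E^*_iE_r$ vanishes on each $E_sV$ for $s\neq r$, its only possible source of nonzero output is $E_rV$, which is spanned by $\xi$; hence $E^*_iE_r\xi=E^*_i\xi\neq 0$. Since $E^*_iV$ is one-dimensional (by multiplicity-freeness of $A^*$), the nonzero vector $E^*_i\xi$ is a basis for $E^*_iV$. Part (ii) then follows immediately from the direct sum decomposition $V=\bigoplus_{i=0}^{d}E^*_iV$: picking one nonzero vector from each summand yields a basis for $V$.

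For part (iii), the diagonal claim is easy: each basis vector $E^*_i\xi$ lies in $E^*_iV$, so $A^*E^*_i\xi=\theta^*_iE^*_i\xi$, giving $A^*$ the diagonal matrix $\mathrm{diag}(\theta^*_0,\ldots,\theta^*_d)$. For the circular bidiagonal claim, I would expand $AE^*_j\xi$. Lemma \ref{lem:aiCirc} gives $(A-a_jI)E^*_jV=E^*_{j+1}V$, so there is a scalar $\beta_j$ with
\[
AE^*_j\xi = a_jE^*_j\xi + \beta_j E^*_{j+1}\xi\qquad (0\le j\le d),
\]
where the subscript on the right is read modulo $n=d+1$. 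Since $E^*_j\xi$ spans $E^*_jV$ and $(A-a_jI)$ maps this one-dimensional space onto the nonzero space $E^*_{j+1}V$, we must have $\beta_j\neq 0$. Reading off the matrix entries: $a_j$ sits on the diagonal, $\beta_j$ sits on the subdiagonal for $0\le j\le d-1$, and $\beta_d$ sits in the top-right corner (coming from $j=d$, where $j+1\equiv 0\pmod n$). This is exactly the circular bidiagonal shape, with all required off-diagonal entries nonzero.

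There is no serious obstacle here; every ingredient is already available. The only point requiring care is the passage from $E^*_iE_r\neq 0$ (a statement about operators) to $E^*_i\xi\neq 0$ (a statement about a specific vector), which is where the one-dimensionality of $E_rV$ does the work and justifies using $\xi$ as a universal ``seed'' for producing all the $E^*_iV$.
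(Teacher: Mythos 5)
Your proposal is correct and, for parts (i) and (ii), follows exactly the paper's argument: $\xi$ spans the one-dimensional eigenspace $E_rV$, so $E^*_i\xi$ spans $E^*_iE_rV$, which is nonzero by Lemma \ref{lem:prodnz}(ii), and (ii) follows from the direct sum $V=\sum_i E^*_iV$. For part (iii) the paper simply appeals to the standard ordering fixed in Definition \ref{def:eigval} (via Definition \ref{def:standard}, using implicitly that the circular bidiagonal/diagonal shape is unaffected by rescaling the eigenvectors), whereas you verify the shape directly from Lemmas \ref{lem:EAEa} and \ref{lem:aiCirc}; your computation is valid and is essentially what the paper records separately as Lemma \ref{lem:BBasis2}.
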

\begin{proof} (i) The dimension of $E^*_iV$ is one, so it suffices to show that $E^*_i \xi \not=0$.
There exists an integer $j$ $(0 \leq j \leq d)$ such that $\theta = \theta_j$. The subspace $E_jV$ has dimension one and contains $\xi$, so $\xi$ spans $E_jV$. Therefore, $E^*_i \xi $ spans $E^*_i E_jV$.
We have $E^*_i E_j \not=0$ by Lemma \ref{lem:prodnz}(ii), so $E^*_i E_j V \not=0$. By these comments $E^*_i \xi \not=0$. \\
\noindent (ii) By (i) and since the sum $V= \sum_{i=0}^d E^*_iV$ is direct. \\
\noindent (iii) By Definition  \ref{def:eigval}.
\end{proof}

\begin{lemma} \label{lem:BBasis2} We refer to the basis  $\lbrace E^*_i \xi \rbrace_{i=0}^d$ in Lemma \ref{lem:BBasis}.
Let $B$ (resp. $B^*$)
denote the matrix in ${\rm Mat}_{d+1}(\mathbb F)$ that represents $A$ (resp. $A^*$) with respect to $\lbrace E^*_i \xi \rbrace_{i=0}^d$.
Then the following {\rm (i)--(iv)} hold:
\begin{enumerate}
\item[\rm (i)] $B$ is circular bidiagonal with constant row sum $\theta$;
\item[\rm (ii)]  $B_{i,i}=a_i$ for $0 \leq i \leq d$;
\item[\rm (iii)] $B_{0,d}=\theta-a_0$,
and $B_{i,i-1}=\theta-a_i$ for $1 \leq i \leq d$;
\item[\rm (iv)] $B^*$ is diagonal, with $B^*_{i,i}=\theta^*_i$ for $0 \leq i \leq d$.
\end{enumerate}
\end{lemma}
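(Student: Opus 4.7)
My plan is as follows. Part (iv) is immediate: by Lemma \ref{lem:BBasis}(i), each basis vector $E^*_i \xi$ lies in $E^*_iV$ and is therefore an eigenvector of $A^*$ with eigenvalue $\theta^*_i$, so $B^*$ is diagonal with $B^*_{i,i}=\theta^*_i$. The remaining work concerns $B$, and I would extract its entries from the expansion
\begin{align*}
A E^*_j \xi = \sum_{i=0}^d E^*_i A E^*_j \xi,
\end{align*}
which uses $\sum_i E^*_i = I$ together with the fact that $E^*_i \xi$ is a basis for $E^*_iV$ (Lemma \ref{lem:BBasis}(i)).

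By Lemma \ref{lem:EAE}(i), the summand $E^*_i A E^*_j \xi$ vanishes unless $i-j \equiv 0$ or $i-j \equiv 1$ (mod $n$). This forces $B$ to have zero entries off the diagonal, subdiagonal, and top-right corner, giving the shape required for (i). The diagonal entry is obtained from Lemma \ref{lem:EAEa}: since $E^*_j A E^*_j = a_j E^*_j$, the coefficient of $E^*_j \xi$ in $A E^*_j \xi$ equals $a_j$, proving (ii). Write $A E^*_i \xi = a_i E^*_i \xi + c_i E^*_{i+1} \xi$ for a uniquely determined scalar $c_i$ (indices taken mod $n$), so that $B_{i,i-1}=c_{i-1}$ for $1 \leq i \leq d$ and $B_{0,d}=c_d$.

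To evaluate the $c_i$, I would exploit the eigenvector hypothesis $A\xi = \theta \xi$. Summing the displayed formula over $i$ and using $\xi = \sum_i E^*_i \xi$ together with $A\xi = \theta \xi$ yields
\begin{align*}
\theta \sum_{i=0}^d E^*_i \xi \;=\; \sum_{i=0}^d a_i E^*_i \xi + \sum_{i=0}^d c_i E^*_{i+1} \xi \;=\; \sum_{i=0}^d (a_i + c_{i-1}) E^*_i \xi.
\end{align*}
Comparing coefficients in the basis $\lbrace E^*_i \xi \rbrace_{i=0}^d$ gives $c_{i-1} = \theta - a_i$ for all $i$ mod $n$, which produces the values in (iii) and simultaneously verifies the constant row sum assertion in (i), since row $i$ of $B$ has exactly the two nonzero entries $a_i$ and $c_{i-1}$.

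The last item is to confirm that $B$ is genuinely circular bidiagonal, which requires the subdiagonal entries and the top-right entry to be nonzero. I would deduce this from Lemma \ref{lem:EAE}(i): that lemma gives $E^*_{i+1} A E^*_i \neq 0$, and since $E^*_iV$ is spanned by the single vector $E^*_i \xi$ we obtain $c_i E^*_{i+1} \xi = E^*_{i+1} A E^*_i \xi \neq 0$, hence $c_i \neq 0$. There is no substantive obstacle here; the only thing to be careful about is keeping the modular indexing consistent, especially when translating between the ``shift'' description $E^*_i \to E^*_{i+1}$ and the specific corner entry $B_{0,d}$.
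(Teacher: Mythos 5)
Your proof is correct and follows essentially the same route as the paper: the diagonal entries come from Lemma \ref{lem:EAEa}, the zero pattern and nonvanishing of the subdiagonal/corner entries come from the structure of $E^*_iAE^*_j$ (Lemma \ref{lem:EAE}), and the remaining entries and the constant row sum come from expanding $A\xi=\theta\xi$ in the basis $\lbrace E^*_i\xi\rbrace_{i=0}^d$, which is exactly the paper's observation that $B{\bf 1}=\theta{\bf 1}$. The only cosmetic difference is that you rederive the circular bidiagonal shape directly from Lemma \ref{lem:EAE} where the paper simply cites Lemma \ref{lem:BBasis}(iii); your indexing of $c_i$ modulo $n$ checks out.
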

\begin{proof} (i) The matrix $B$ is circular bidiagonal by Definition \ref{def:cbp}(i) and Lemma \ref{lem:BBasis}(iii). Define a vector
${\bf 1} \in \mathbb F^{d+1}$ that has all entries 1. We have $B {\bf 1} = \theta {\bf 1}$,  because $\xi = \sum_{i=0}^d E^*_i \xi$ is an eigenvector for $A$ with eigenvalue $\theta$.
By $B {\bf 1} = \theta {\bf 1}$, the matrix $B$ has constant row sum $\theta$.
 \\
\noindent (ii) By Lemma \ref{lem:EAEa}. \\
\noindent (iii) By (i), (ii) above. \\
\noindent (iv) The matrix $B^*$ is diagonal by Definition \ref{def:cbp}(i) and Lemma \ref{lem:BBasis}(iii).
 By Definition \ref{def:eigval} we obtain $B^*_{i,i}=\theta^*_i$ for $0 \leq i \leq d$.
\end{proof}

\begin{lemma} \label{lem:match} Let $\theta$ denote an eigenvalue of $A$. Then $\theta \not= a_i$ for $0 \leq i \leq d$.
\end{lemma}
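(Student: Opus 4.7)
The plan is to apply Lemma~\ref{lem:BBasis2} directly. Let $\theta$ be an eigenvalue of $A$, and let $\xi \in V$ be a nonzero eigenvector for $\theta$. Consider the basis $\lbrace E^*_i \xi \rbrace_{i=0}^d$ of $V$ constructed in Lemma~\ref{lem:BBasis}, and let $B \in {\rm Mat}_{d+1}(\mathbb F)$ be the matrix representing $A$ with respect to this basis.

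By Lemma~\ref{lem:BBasis2}(i), $B$ is circular bidiagonal. By the definition of circular bidiagonal given in Section~2, this forces the top-right entry $B_{0,d}$ and every subdiagonal entry $B_{i,i-1}$ (for $1 \leq i \leq d$) to be nonzero. On the other hand, Lemma~\ref{lem:BBasis2}(iii) identifies these entries explicitly: $B_{0,d} = \theta - a_0$ and $B_{i,i-1} = \theta - a_i$ for $1 \leq i \leq d$. Combining these two observations gives $\theta - a_i \neq 0$ for $0 \leq i \leq d$, which is the desired conclusion.

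There is essentially no obstacle here; the lemma is an immediate corollary of the earlier structural description of $A$ on the basis $\lbrace E^*_i \xi \rbrace_{i=0}^d$, since the nonvanishing of the subdiagonal and top-right entries is precisely the condition built into the definition of ``circular bidiagonal.''
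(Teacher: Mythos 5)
Your proof is correct and follows exactly the same route as the paper: pass to the basis $\lbrace E^*_i \xi \rbrace_{i=0}^d$, invoke Lemma~\ref{lem:BBasis2}(i) to get that $B$ is circular bidiagonal (hence has nonzero subdiagonal and top-right entries), and read off those entries via Lemma~\ref{lem:BBasis2}(iii). No issues.
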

\begin{proof} Let $0 \not=\xi \in V$ denote an eigenvector for $A$ with eigenvalue $\theta$, and let $B \in {\rm Mat}_{d+1}(\mathbb F)$
represent $A$ with respect to the basis $\lbrace E^*_i \xi \rbrace_{i=0}^d$. The matrix $B$ is circular bidiagonal by
 Lemma \ref{lem:BBasis2}(i). Therefore, 
$B_{0,d} \not=0$ and $B_{i,i-1} \not=0$ for $1 \leq i \leq d$. The result follows in view of Lemma \ref{lem:BBasis2}(iii).
\end{proof}

\noindent Our next general goal is to obtain a relation involving $A$ and $A^*$.
\medskip

\noindent The next two lemmas contain results about $A$ and $\lbrace E^*_i \rbrace_{i=0}^d$; similar results hold for $A^*$ and $\lbrace E_i \rbrace_{i=0}^d$.
\begin{lemma} \label{lem:AE} The following $\rm (i), (ii) $ hold for $0 \leq i \leq d$:
\begin{enumerate}
\item[\rm (i)]  $E^*_i A = E^*_i A E^*_i + E^*_i A E^*_{i-1}$;
\item[\rm (ii)] $A E^*_i  = E^*_i A E^*_i + E^*_{i+1} A E^*_i$.
\end{enumerate}
\end{lemma}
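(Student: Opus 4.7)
\medskip

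The plan is to prove both (i) and (ii) by inserting the resolution of identity $I = \sum_{j=0}^d E^*_j$ on the appropriate side of $A$, and then using Lemma \ref{lem:EAE}(i) to discard all but two terms.

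For part (i), I would write
\begin{align*}
E^*_i A = E^*_i A \cdot I = E^*_i A \sum_{j=0}^d E^*_j = \sum_{j=0}^d E^*_i A E^*_j.
\end{align*}
By Lemma \ref{lem:EAE}(i), the summand $E^*_i A E^*_j$ vanishes unless $i-j \in \{0,1\}$ (mod $n$); in the other direction, the two summands corresponding to $j=i$ and $j=i-1$ (interpreted mod $n$) survive. This yields exactly $E^*_i A = E^*_i A E^*_i + E^*_i A E^*_{i-1}$.

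For part (ii), I would symmetrically insert the resolution of identity on the left of $A$:
\begin{align*}
A E^*_i = I \cdot A E^*_i = \sum_{j=0}^d E^*_j A E^*_i,
\end{align*}
and apply Lemma \ref{lem:EAE}(i) again, this time keeping the indices $j=i$ and $j=i+1$, to obtain $A E^*_i = E^*_i A E^*_i + E^*_{i+1} A E^*_i$.

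There is no real obstacle here; the only subtlety worth flagging is the cyclic convention for subscripts established just before Definition \ref{def:ai}, so that $E^*_{i-1}$ (for $i=0$) and $E^*_{i+1}$ (for $i=d$) make sense, and the condition ``$i-j \equiv 0$ or $1$ (mod $n$)'' picks out exactly two values of $j$ in $\{0,1,\ldots,d\}$. The companion statements for $A^*$ and $\{E_i\}_{i=0}^d$ follow verbatim from Lemma \ref{lem:EAE}(ii) by the same argument.
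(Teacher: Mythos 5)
Your proof is correct and is essentially identical to the paper's: both insert $I=\sum_{j=0}^d E^*_j$ on the appropriate side of $A$ and invoke Lemma \ref{lem:EAE}(i) to kill all but the two surviving terms, with subscripts read modulo $n$.
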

\begin{proof} (i) Using Lemma \ref{lem:EAE}(i) we have
\begin{align*}
E^*_i A = E^*_i A I = \sum_{j=0}^d E^*_i A E^*_j = E^*_i A E^*_i + E^*_i A E^*_{i-1}.
\end{align*}
 (ii) Using Lemma \ref{lem:EAE}(i) we have
\begin{align*}
A E^*_i  = I A E^*_i  = \sum_{j=0}^d E^*_jA E^*_i = E^*_i A E^*_i + E^*_{i+1} A E^*_i.
\end{align*}
\end{proof}

\begin{lemma} \label{lem:AEandEA} For $0 \leq i \leq d$,
\begin{align*}
A E^*_i - a_i E^*_i = E^*_{i+1} A E^*_i = E^*_{i+1} A - a_{i+1} E^*_{i+1}.
\end{align*}
\end{lemma}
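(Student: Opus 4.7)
The plan is to derive both equalities directly by combining Lemma \ref{lem:AE} with Lemma \ref{lem:EAEa}, since the quantity $E^*_{i+1} A E^*_i$ appears naturally in both decompositions.

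First I would establish the left equality $A E^*_i - a_i E^*_i = E^*_{i+1} A E^*_i$ by starting from Lemma \ref{lem:AE}(ii), which expresses $A E^*_i$ as $E^*_i A E^*_i + E^*_{i+1} A E^*_i$, and then replacing $E^*_i A E^*_i$ with $a_i E^*_i$ using Lemma \ref{lem:EAEa}. Rearranging gives the desired identity.

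Next I would establish the right equality $E^*_{i+1} A E^*_i = E^*_{i+1} A - a_{i+1} E^*_{i+1}$ by applying Lemma \ref{lem:AE}(i) with index $i+1$ in place of $i$ (interpreting subscripts modulo $n$), which gives $E^*_{i+1} A = E^*_{i+1} A E^*_{i+1} + E^*_{i+1} A E^*_i$, and then substituting $E^*_{i+1} A E^*_{i+1} = a_{i+1} E^*_{i+1}$ from Lemma \ref{lem:EAEa}. Rearranging yields the claim.

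There is no real obstacle here; the statement is a one-step consequence of the preceding two lemmas, and both equalities isolate the same ``off-diagonal'' piece $E^*_{i+1} A E^*_i$ from the two natural decompositions of $A E^*_i$ and $E^*_{i+1} A$.
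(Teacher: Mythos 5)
Your proof is correct and follows exactly the paper's argument: the left equality from Lemma \ref{lem:AE}(ii) together with Lemma \ref{lem:EAEa}, and the right equality from Lemma \ref{lem:AE}(i) (with $i$ replaced by $i+1$, subscripts read modulo $n$) together with Lemma \ref{lem:EAEa}. You have simply written out the substitutions that the paper leaves implicit.
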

\begin{proof} The equation on the left follows from Lemma \ref{lem:EAEa} and Lemma \ref{lem:AE}(ii). The equation on the
right follows from  Lemma \ref{lem:EAEa} and Lemma \ref{lem:AE}(i).
\end{proof} 

\noindent We bring in some notation. Define
\begin{align*}
M^* A M^* = {\rm Span} \lbrace X A Y \vert X, Y \in M^* \rbrace.
\end{align*}

\begin{proposition} \label{prop:MAM}  The following {\rm (i), (ii)} hold.
\begin{enumerate}
\item[\rm (i)] $M^* A M^* + M^* = A M^* + M^*$;
\item[\rm (ii)]  $M^* A M^* + M^* = M^* A + M^*$.
\end{enumerate}
\end{proposition}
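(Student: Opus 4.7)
The plan is to unpack $M^{*}AM^{*}$ using the fact that $\{E^{*}_i\}_{i=0}^d$ is a basis for $M^{*}$. Thus $M^{*}AM^{*}$ is the span of the products $E^{*}_i A E^{*}_j$ with $0 \leq i,j \leq d$. By Lemma \ref{lem:EAE}(i), these products vanish unless $i-j \equiv 0$ or $i-j \equiv 1$ (mod $n$). So $M^{*}AM^{*}$ is spanned by the ``diagonal'' terms $E^{*}_i A E^{*}_i$ together with the ``off-diagonal'' terms $E^{*}_{i+1} A E^{*}_i$ for $0 \leq i \leq d$. By Lemma \ref{lem:EAEa}, each diagonal term equals $a_i E^{*}_i$, which lies in $M^{*}$. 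Consequently,
\begin{align*}
M^{*}A M^{*} + M^{*} = M^{*} + \mathrm{Span}\{ E^{*}_{i+1} A E^{*}_i : 0 \leq i \leq d\}.
\end{align*}

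For part (i), I would apply the left equality in Lemma \ref{lem:AEandEA}, which rewrites each off-diagonal term as $E^{*}_{i+1} A E^{*}_i = A E^{*}_i - a_i E^{*}_i$. Since $AE^{*}_i \in AM^{*}$ and $a_i E^{*}_i \in M^{*}$, this shows $M^{*}AM^{*} + M^{*} \subseteq AM^{*} + M^{*}$. For the reverse inclusion, the same identity reads $AE^{*}_i = a_i E^{*}_i + E^{*}_{i+1} A E^{*}_i \in M^{*} + M^{*}AM^{*}$, and summing over $i$ gives $AM^{*} \subseteq M^{*} + M^{*}AM^{*}$, which yields the opposite containment.

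For part (ii), I would use the right equality in Lemma \ref{lem:AEandEA}, which gives $E^{*}_{i+1}AE^{*}_i = E^{*}_{i+1}A - a_{i+1} E^{*}_{i+1}$. This immediately shows $M^{*}AM^{*} + M^{*} \subseteq M^{*}A + M^{*}$. For the reverse inclusion, rearranging yields $E^{*}_{i+1}A = a_{i+1} E^{*}_{i+1} + E^{*}_{i+1} A E^{*}_i \in M^{*} + M^{*}AM^{*}$; since every element of $M^{*}A$ is a linear combination of the elements $E^{*}_{j}A$ (using $I = \sum_j E^{*}_j$), we conclude $M^{*}A \subseteq M^{*} + M^{*}AM^{*}$.

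There is no real obstacle here, just bookkeeping: the main content is already captured in Lemmas \ref{lem:EAE} and \ref{lem:AEandEA}. The only thing worth checking carefully is that every $E^{*}_j A$ can be written in the stated form (which is automatic since $j$ ranges over $\{0,1,\ldots,d\}$ and so does $i+1$ modulo $n$), and that the roles of left- and right-multiplication by $M^{*}$ are correctly mirrored between (i) and (ii).
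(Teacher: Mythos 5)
Your proof is correct and follows essentially the same route as the paper: expand $M^*AM^*$ over the basis $\{E^*_i\}_{i=0}^d$, kill all but the $i-j\in\{0,1\}$ (mod $n$) terms via Lemma \ref{lem:EAE}(i), absorb the diagonal terms into $M^*$ via Lemma \ref{lem:EAEa}, and convert the off-diagonal terms via Lemma \ref{lem:AEandEA}. The only cosmetic difference is in the reverse inclusions, where the paper simply observes that $I\in M^*$ gives $AM^*\subseteq M^*AM^*$ and $M^*A\subseteq M^*AM^*$ at once, whereas you rearrange the identities of Lemma \ref{lem:AEandEA}; both are valid.
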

\begin{proof} (i) To obtain the inclusion $\subseteq $, we use Lemmas  \ref{lem:EAEa}, \ref{lem:EAE}(i), \ref{lem:AEandEA} to obtain
\begin{align*}
M^* A M^* &= {\rm Span} \lbrace  E^*_i A E^*_j \vert 0 \leq i,j\leq d\rbrace \\
        &= {\rm Span} \lbrace  E^*_i A E^*_j \vert 0 \leq i,j\leq d, \; i-j \in \lbrace 0,1\rbrace \; \mbox{\rm (mod $n$)}\rbrace \\
& = {\rm Span} \lbrace E^*_i A E^*_i \vert 0 \leq i \leq d \rbrace+ {\rm Span} \lbrace E^*_{i+1} A E^*_i \vert 0 \leq i \leq d\rbrace \\
& \subseteq A M^* + M^*.
\end{align*}
\noindent The inclusion $\supseteq$ holds since $I \in M^*$.
\\
\noindent (ii) Similar to the proof of (i).
\end{proof} 

\begin{proposition} \label{prop:AAs}
There exists a unique sequence $q, \alpha, \beta, \gamma $ of scalars in $\mathbb F$ such that
\begin{align}
q A A^* - A^*A+ \alpha A -\beta A^* = \gamma I.
\label{lem:five}
\end{align}
\end{proposition}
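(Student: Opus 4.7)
The plan is to split the argument into uniqueness and existence.

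Uniqueness is immediate from Lemma \ref{lem:basisEE}(ii): the four elements $I, A, A^*, AA^*$ are among the basis vectors $\{A^i(A^*)^j\}_{0\le i,j\le d}$ of $\mathrm{End}(V)$ and are therefore linearly independent. Hence any two tuples $(q,\alpha,\beta,\gamma)$ and $(q',\alpha',\beta',\gamma')$ satisfying the desired identity must coincide; otherwise their difference would produce a nontrivial linear dependence among these four elements.

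For existence, I work inside the subspace $W:=M^* + M^*A$ of $\mathrm{End}(V)$, which admits the basis $\{E^*_j,\,E^*_jA : 0 \le j \le d\}$ of dimension $2(d+1)$ (independence follows from Lemmas \ref{lem:EAE}(i) and \ref{lem:aiCirc}, since each $E^*_j$ and the corresponding $E^*_{j+1}AE^*_j$ act on distinct eigenspaces of $A^*$). Using Lemmas \ref{lem:EAEa}, \ref{lem:AE}, and \ref{lem:AEandEA}, I expand
\[A = \sum_j E^*_j A,\qquad A^* = \sum_j \theta^*_j E^*_j,\qquad I = \sum_j E^*_j,\qquad A^*A = \sum_j \theta^*_j\, E^*_j A,\]
\[AA^* = \sum_j \theta^*_{j-1}\,E^*_j A + \sum_j a_j(\theta^*_j - \theta^*_{j-1})\,E^*_j.\]
Projecting the relation $qAA^* - A^*A + \alpha A - \beta A^* - \gamma I = 0$ onto the two families of basis vectors yields, for each cyclic index $j$, the two scalar conditions
\[(\mathrm A)_j:\ \theta^*_j = q\theta^*_{j-1} + \alpha,\qquad (\mathrm B)_j:\ q\,a_j(\theta^*_j - \theta^*_{j-1}) = \beta\theta^*_j + \gamma.\]
Because $\theta^*_0, \theta^*_1, \theta^*_2$ are pairwise distinct by Lemma \ref{lem:mf}, the pair $(\mathrm A)_1, (\mathrm A)_2$ determines $q$ and $\alpha$, after which $(\mathrm B)_0, (\mathrm B)_1$ becomes a $2 \times 2$ linear system in $\beta,\gamma$ with nonzero determinant $\theta^*_1 - \theta^*_0$, yielding $\beta$ and $\gamma$.

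The substantial remaining step—and, I expect, the main obstacle—is verifying that these values also satisfy $(\mathrm A)_j$ and $(\mathrm B)_j$ for $j \notin \{0,1,2\}$; that is, the spectrum $\{\theta^*_j\}$ really does obey a cyclic affine recurrence and the scalars $\{a_j\}$ the corresponding compatibility. My plan for this is to run the symmetric analysis in the dual basis $\{E_i,\,E_iA^* : 0 \le i \le d\}$ of $M + MA^*$, obtaining an analogous cyclic system on $\{\theta_i\}, \{a^*_i\}$ which must share the same scalar $q$, and then to combine these with the characteristic-polynomial identity $\prod_i(\lambda - a_i) - \prod_i(\theta - a_i) = \prod_i(\lambda - \theta_i)$ forced by the circular bidiagonal representation in Lemma \ref{lem:BBasis2} (and its dual). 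The cyclic-recurrence constraints produced by both bases, together with these polynomial identities, should impose enough structural rigidity on the two spectra and on $\{a_j\}, \{a^*_i\}$ to validate all remaining instances of $(\mathrm A)_j$ and $(\mathrm B)_j$, completing the proof of existence.
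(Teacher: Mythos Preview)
Your uniqueness argument is correct and matches the paper's exactly.

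For existence, your decomposition in $W=M^*+M^*A$ and the resulting scalar systems $(\mathrm A)_j$, $(\mathrm B)_j$ are valid. The problem is the ``substantial remaining step'' you flag: verifying that \emph{all} cyclic instances of $(\mathrm A)_j$ hold once $q,\alpha$ are fixed by two of them. Your proposed route through the dual basis and the characteristic-polynomial identity does not close this gap. The dual analysis yields a parallel system $\theta_i=q'\theta_{i-1}+\beta'$, $\ldots$ with the \emph{same} structural defect (parameters determined by two instances, the rest unverified), so nothing is gained by symmetry alone. The determinant identity $\prod_i(\lambda-a_i)-\prod_i(\theta-a_i)=\prod_i(\lambda-\theta_i)$, valid for every eigenvalue $\theta$ of $A$, only tells you that $\prod_i(\theta_k-a_i)$ is independent of $k$; this symmetric-function constraint is far too coarse to force an \emph{affine} recursion on the individual $\theta^*_j$. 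So as written, the proposal does not reach a proof.

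The paper avoids this trap by never trying to guess the parameters first. It uses Proposition~\ref{prop:MAM} to write $A^*A=AX+Y$ with $X,Y\in M^*$, and then bounds the $A^*$-degree of $X,Y$ by sandwiching with primitive idempotents on the \emph{other} side: for the hypothetical top degree $m\ge 2$, one computes $E_{m+i}A^*AE_i$ two ways (it vanishes by Lemma~\ref{lem:EAE}(ii), yet via $AX+Y$ it equals $(\theta_{m+i}\alpha_m+\beta_m)E_{m+i}(A^*)^mE_i$ with the last factor nonzero by Lemma~\ref{lem:higher}(ii)), forcing $\alpha_m=\beta_m=0$. Once $X,Y$ are affine in $A^*$, the relation \eqref{lem:five} drops out with no residual verification. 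The missing idea in your approach is precisely this cross-use of the $E_i$ idempotents to control degree, rather than trying to solve and then confirm a cyclic recurrence on the $\theta^*_j$.
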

\begin{proof} First, we show that the sequence $q, \alpha, \beta, \gamma$ exists.
Using Proposition \ref{prop:MAM}, we obtain
\begin{align*}
A^* A \in  M^*A \subseteq M^*A+M^*=  AM^*  + M^*.
\end{align*}
Therefore, there exist $X, Y \in M^*$ such that $A^* A =A X  + Y$. The elements $\lbrace (A^*)^r \rbrace_{r=0}^d$ form a basis for $M^*$. Write
\begin{align*}
X = \sum_{r=0}^d \alpha_r (A^*)^r, \qquad \qquad Y = \sum_{r=0}^d \beta_r (A^*)^r, \qquad \qquad \alpha_r, \beta_r \in \mathbb F.
\end{align*}
We claim that $\alpha_r = 0 $ and $\beta_r=0$ for $2 \leq r \leq d$.  To prove the claim, we assume that it is false, and get a contradiction.
There exists an integer $r$ $(2 \leq r \leq d)$ such that $\alpha_r \not=0$ or $\beta_r \not=0$.
Define
\begin{align*}
m = {\rm max} \lbrace r \vert 2 \leq r \leq d, \; \alpha_r\not=0 \;{\rm or} \;\beta_r \not=0\rbrace.
\end{align*}
\noindent By construction $2 \leq m \leq d$. Also, $\alpha_r=0$ and $\beta_r=0$ for $m+1 \leq r \leq d$. Therefore
\begin{align*}
X = \sum_{r=0}^m \alpha_r (A^*)^r, \qquad \qquad Y = \sum_{r=0}^m \beta_r (A^*)^r.
\end{align*}
\noindent Let $0 \leq i \leq d$. 
By Lemma \ref{lem:higher}(ii) and the construction, we have
\begin{align*}
E_{m+i} X E_i = \alpha_m E_{m+i} (A^*)^m E_i, \qquad \qquad
E_{m+i} Y E_i = \beta_m E_{m+i} (A^*)^m E_i.
\end{align*}
\noindent Also, by Lemma \ref{lem:EAE}(ii) and $m \geq 2$, we have $E_{m+i} A^* E_i=0$. 
\noindent  We may now argue
\begin{align*}
0 &=  E_{m+i }  A^* E_i \theta_i\\
&=  E_{m+i} A^*A E_i  \\
&= E_{m+i} (AX+Y) E_i \\
&= \theta_{m+i} E_{m+i}  X E_i + E_{m+i} Y E_i \\
&= (\theta_{m+i} \alpha_m + \beta_m) E_{m+i} (A^*)^m E_i.
 \end{align*}
We have $E_{m+i} (A^*)^m E_i \not=0$ by Lemma \ref{lem:higher}(ii). By these comments
$0 = \theta_{m+i} \alpha_m+\beta_m$ for $0 \leq i \leq d$.  In particular, 
\begin{align*}
0 = \theta_0 \alpha_m+\beta_m, \qquad \qquad 0 = \theta_1 \alpha_m+\beta_m.
\end{align*}
We have $\theta_0 \not=\theta_{1}$, so $\alpha_m=0$ and $\beta_m=0$. This contradicts the definition of $m$, so the claim is proved.
By the claim, $X=\alpha_0 I + \alpha_1 A^*$ and $Y= \beta_0 I + \beta_1 A^*$. Using this to evaluate $A^*A=AX+Y$, we obtain
 \eqref{lem:five} with 
 \begin{align*}
 q=\alpha_1, \qquad  \alpha=\alpha_0, \qquad \beta = - \beta_1, \qquad \gamma=-\beta_0.
 \end{align*}
 We have shown that the sequence $q, \alpha, \beta, \gamma$ exists. This sequence is unique, 
because the following maps are linearly independent by Lemma \ref{lem:basisEE}(ii):
\begin{align*}
A A^*, \quad A, \quad A^*, \quad I.
\end{align*}
\end{proof}

\begin{definition}\label{def:profile} \rm The sequence $q, \alpha, \beta, \gamma $ from
Proposition \ref{prop:AAs} is called the {\it profile} of $A, A^*$.
\end{definition}

\begin{lemma} \label{lem:thRec} The following  {\rm (i), (ii)} hold for $0 \leq i \leq d$:
\begin{enumerate}
\item[\rm (i)] $q \theta_{i+1} = \theta_i +\beta$;
\item[\rm (ii)] $\theta^*_{i+1} = q \theta^*_i + \alpha$.
\end{enumerate}
\end{lemma}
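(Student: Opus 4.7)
The plan is to extract each recursion by sandwiching the defining relation $qAA^* - A^*A + \alpha A - \beta A^* = \gamma I$ of Proposition~\ref{prop:AAs} between appropriate primitive idempotents, thereby collapsing it to a scalar identity. The key auxiliary facts I would invoke are $AE_i = \theta_i E_i = E_i A$ and $A^* E^*_i = \theta^*_i E^*_i = E^*_i A^*$, together with $E_i E_j = \delta_{i,j} E_i$, $E^*_i E^*_j = \delta_{i,j} E^*_i$, and the non-vanishing assertions $E^*_{i+1} A E^*_i \neq 0$ and $E_{i+1} A^* E_i \neq 0$ from Lemma~\ref{lem:EAE}.

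For part (ii), I would multiply the relation on the left by $E^*_{i+1}$ and on the right by $E^*_i$. Since $i+1 \not\equiv i \pmod{n}$ (recall $d \geq 1$), both the right-hand side $\gamma E^*_{i+1} E^*_i$ and the $\beta$-term $\beta E^*_{i+1} A^* E^*_i = \beta \theta^*_i E^*_{i+1} E^*_i$ vanish. The remaining three terms all contain the factor $E^*_{i+1} A E^*_i$: explicitly, $E^*_{i+1} A A^* E^*_i = \theta^*_i E^*_{i+1} A E^*_i$ and $E^*_{i+1} A^* A E^*_i = \theta^*_{i+1} E^*_{i+1} A E^*_i$. Collecting terms yields
\begin{align*}
(q \theta^*_i + \alpha - \theta^*_{i+1}) E^*_{i+1} A E^*_i = 0.
\end{align*}
Since $E^*_{i+1} A E^*_i \neq 0$ by Lemma~\ref{lem:EAE}(i), the scalar factor must vanish, giving $\theta^*_{i+1} = q \theta^*_i + \alpha$.

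For part (i), I would run the dual version of this argument: multiply the same relation on the left by $E_{i+1}$ and on the right by $E_i$. Now $E_{i+1} E_i = 0$ kills both the right-hand side and the $\alpha$-term, while $E_{i+1} A A^* E_i = \theta_{i+1} E_{i+1} A^* E_i$ and $E_{i+1} A^* A E_i = \theta_i E_{i+1} A^* E_i$, and $E_{i+1} A^* E_i$ factors out of the $\beta$-term as well. This collapses to
\begin{align*}
(q \theta_{i+1} - \theta_i - \beta) E_{i+1} A^* E_i = 0,
\end{align*}
and $E_{i+1} A^* E_i \neq 0$ by Lemma~\ref{lem:EAE}(ii), so $q \theta_{i+1} = \theta_i + \beta$.

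I do not anticipate any genuine obstacle: once the relation is sandwiched, the computation is formal and the non-vanishing inputs are already in place. The only thing to be mindful of is the cyclic subscript convention — in particular, when $i = d$ the indices $\theta_{i+1}$ and $\theta^*_{i+1}$ mean $\theta_0$ and $\theta^*_0$ — but this is precisely compatible with the convention fixed just before Definition~\ref{def:ai}.
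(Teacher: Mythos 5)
Your proposal is correct and is essentially identical to the paper's proof: the paper also obtains (i) by multiplying \eqref{lem:five} on the left by $E_{i+1}$ and on the right by $E_i$, and (ii) by the analogous sandwich with $E^*_{i+1}$ and $E^*_i$, then divides out the nonzero factor from Lemma \ref{lem:EAE}. Your explicit bookkeeping of which terms vanish and your remark about the cyclic subscript convention at $i=d$ are exactly the "simplify" step the paper leaves implicit.
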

\begin{proof} (i) In the equation \eqref{lem:five}, multiply each term on the left by $E_{i+1}$ and on the right by $E_i$. Simplify the result using
$E_{i+1} A^* E_i \not=0$. \\
\noindent (ii) In the equation \eqref{lem:five}, multiply each term on the left by $E^*_{i+1}$ and on the right by $E^*_i$. Simplify the result using
$E^*_{i+1} A E^*_i \not=0$. 
\end{proof}

\begin{lemma} \label{lem:ai} The following  {\rm (i), (ii)} hold for $0 \leq i \leq d$:
\begin{enumerate}
\item[\rm (i)] $a_i \bigl(\theta^*_i (q-1)+\alpha      \bigr) = \beta \theta^*_i + \gamma$;
\item[\rm (ii)] $a^*_i \bigl(\theta_i (1-q)  +\beta \bigr) = \alpha \theta_i - \gamma$.
\end{enumerate}
\end{lemma}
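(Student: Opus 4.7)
The plan is to derive each identity by sandwiching the profile relation $qAA^*-A^*A+\alpha A-\beta A^* = \gamma I$ from Proposition \ref{prop:AAs} between the appropriate primitive idempotent, then reducing with the eigenvalue identities together with Lemma \ref{lem:EAEa}.

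For part (i), I would multiply the profile relation on the left and right by $E^*_i$ to obtain
\begin{align*}
q\, E^*_i A A^* E^*_i - E^*_i A^* A E^*_i + \alpha\, E^*_i A E^*_i - \beta\, E^*_i A^* E^*_i = \gamma E^*_i.
\end{align*}
Now the identities $A^* E^*_i = \theta^*_i E^*_i = E^*_i A^*$ let me pull $A^*$ out as the scalar $\theta^*_i$ in the first two terms, so each of the four terms on the left becomes a scalar multiple of $E^*_i A E^*_i$ or of $E^*_i A^* E^*_i$. Applying Lemma \ref{lem:EAEa}, which gives $E^*_i A E^*_i = a_i E^*_i$, together with $E^*_i A^* E^*_i = \theta^*_i E^*_i$, this reduces to
\begin{align*}
\bigl( q\theta^*_i a_i - \theta^*_i a_i + \alpha a_i - \beta \theta^*_i \bigr) E^*_i = \gamma E^*_i.
\end{align*}
Since $E^*_i \neq 0$, comparing coefficients and collecting the $a_i$ terms produces exactly $a_i(\theta^*_i(q-1)+\alpha) = \beta\theta^*_i+\gamma$.

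For part (ii), I would carry out the symmetric computation: sandwich the profile relation between $E_i$ on the left and right, and use $AE_i = \theta_i E_i = E_i A$ to reduce the first two terms. Lemma \ref{lem:EAEa} now supplies $E_i A^* E_i = a^*_i E_i$, while $E_i A E_i = \theta_i E_i$. This yields
\begin{align*}
\bigl( q\theta_i a^*_i - \theta_i a^*_i + \alpha\theta_i - \beta a^*_i \bigr) E_i = \gamma E_i,
\end{align*}
and collecting the $a^*_i$ terms gives $a^*_i(\theta_i(1-q)+\beta) = \alpha\theta_i - \gamma$.

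This is essentially a one-line calculation in each case; there is no substantial obstacle. The only care needed is to apply Lemma \ref{lem:EAEa} on the correct side of each term so that the central occurrence of $A$ (respectively $A^*$) absorbs into the scalar $a_i$ (respectively $a^*_i$), while the outer occurrence of $A^*$ (respectively $A$) absorbs into $\theta^*_i$ (respectively $\theta_i$) via the eigenvalue identity.
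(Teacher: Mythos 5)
Your proof is correct and is exactly the paper's argument: sandwich the profile relation between $E^*_i$ (resp. $E_i$) on both sides and reduce using $E^*_iAE^*_i=a_iE^*_i$ (resp. $E_iA^*E_i=a^*_iE_i$) together with the eigenvalue identities. The paper states this in one line; your version just spells out the same computation.
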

\begin{proof} (i) In the equation \eqref{lem:five}, multiply each term on the left and right by $E^*_{i}$. Simplify the result using
$E^*_{i} A E^*_i =a_i E^*_i$. \\
\noindent (ii) In the equation \eqref{lem:five}, multiply each term on the left and right by $E_i$. Simplify the result using
$E_{i} A^* E_i = a^*_i E_i$. 
\end{proof}

\begin{lemma} \label{lem:nonz} The scalars $\alpha$, $\beta$ satisfy the following inequalities.
\begin{enumerate}
\item[\rm (i)] Assume that $q\not=1$. Then 
\begin{align*}
    \alpha \not=(1-q)\theta^*_0,   \qquad \qquad    \beta \not=(q-1)\theta_0.
          \end{align*}
\item[\rm (ii)]  Assume that $q=1$. Then
\begin{align*} 
\alpha \not=0, \qquad \qquad \beta \not=0.
\end{align*}
\end{enumerate}
\end{lemma}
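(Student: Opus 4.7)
The plan is to read off both inequalities directly from the ``base case'' $i=0$ of the recursions in Lemma \ref{lem:thRec}, using only that $A$ and $A^*$ are multiplicity-free (Lemma \ref{lem:mf}); in particular $\theta_0 \neq \theta_1$ and $\theta^*_0 \neq \theta^*_1$ since a standard ordering lists the eigenvalues of $A$ (resp.\ $A^*$) without repetition.

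Part (ii) is immediate. With $q=1$, Lemma \ref{lem:thRec}(ii) at $i=0$ reads $\theta^*_1 = \theta^*_0 + \alpha$, so $\alpha = 0$ would force $\theta^*_1 = \theta^*_0$, a contradiction; hence $\alpha \neq 0$. Symmetrically, Lemma \ref{lem:thRec}(i) at $i=0$ reads $\theta_1 = \theta_0 + \beta$, so $\beta = 0$ would force $\theta_1 = \theta_0$, again a contradiction.

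For part (i), with $q \neq 1$, I first need the auxiliary fact that $q \neq 0$. If $q=0$, then Lemma \ref{lem:thRec}(ii) gives $\theta^*_{i+1} = \alpha$ for every $i$; applying this at $i=0$ and $i=1$ yields $\theta^*_1 = \theta^*_2 = \alpha$ when $d \geq 2$, while for $d=1$ we instead get $\theta^*_1 = \alpha = \theta^*_0$. Either way the distinctness of the $\theta^*_i$ is violated, so $q \neq 0$. Now, if $\alpha = (1-q)\theta^*_0$, then Lemma \ref{lem:thRec}(ii) at $i=0$ gives $\theta^*_1 = q\theta^*_0 + (1-q)\theta^*_0 = \theta^*_0$, a contradiction. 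If $\beta = (q-1)\theta_0$, then Lemma \ref{lem:thRec}(i) at $i=0$ gives $q\theta_1 = \theta_0 + (q-1)\theta_0 = q\theta_0$; dividing by $q \neq 0$ yields $\theta_1 = \theta_0$, once again a contradiction.

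There is no substantive obstacle: the proof is a direct application of the two recursions together with the multiplicity-free property. The only care required is in the $q \neq 1$ case, where one must rule out $q=0$ before dividing; this is handled in one line using the same distinctness argument.
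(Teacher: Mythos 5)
Your proof is correct and follows the same route as the paper: both read the inequalities off Lemma \ref{lem:thRec} at $i=0$ together with $\theta_0\neq\theta_1$ and $\theta^*_0\neq\theta^*_1$. Your extra step establishing $q\neq 0$ before cancelling $q$ in the $\beta$ inequality is a genuine (if small) improvement: the paper's one-line proof silently relies on this fact, which only becomes explicit later in the development (it also follows from Lemma \ref{lem:sumZero}).
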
 
\begin{proof} The inequality about $\alpha$ is from Lemma \ref{lem:thRec}(ii) with $i=0$ and $\theta^*_1 \not=\theta^*_0$. 
The inequality about $\beta$ is from Lemma \ref{lem:thRec}(i) with $i=0$ and $\theta_1 \not=\theta_0$. 
\end{proof}

\begin{lemma}\label{lem:pChar} For $0 \leq i \leq d$ we have
\begin{align} \label{eq:LR}
\frac{\theta_{i+1} - \theta_0}{\theta_1 - \theta_0}= \sum_{\ell=0}^i q^{-\ell}, \qquad \qquad 
\frac{\theta^*_{i+1} - \theta^*_0}{\theta^*_1 - \theta^*_0} = \sum_{\ell=0}^i q^\ell.
\end{align}
\end{lemma}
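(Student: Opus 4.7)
My plan is to derive the two identities independently from the one-step recursions provided by Lemma \ref{lem:thRec}, by passing to consecutive differences (to eliminate the additive constants $\alpha$ and $\beta$) and then telescoping.

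For the second identity, I would start from Lemma \ref{lem:thRec}(ii), which gives $\theta^*_{i+1} = q\theta^*_i + \alpha$ for $0 \leq i \leq d-1$ (and similarly for $i-1$ replacing $i$). Subtracting the two instances eliminates $\alpha$ and yields the homogeneous recursion $\theta^*_{i+1} - \theta^*_i = q(\theta^*_i - \theta^*_{i-1})$. A straightforward induction on $i$ then gives $\theta^*_{\ell+1} - \theta^*_\ell = q^\ell (\theta^*_1 - \theta^*_0)$ for $0 \leq \ell \leq d-1$. Telescoping, $\theta^*_{i+1} - \theta^*_0 = \sum_{\ell=0}^i (\theta^*_{\ell+1} - \theta^*_\ell) = (\theta^*_1 - \theta^*_0)\sum_{\ell=0}^i q^\ell$. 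Since $A^*$ is multiplicity-free by Lemma \ref{lem:mf}, $\theta^*_1 \neq \theta^*_0$, so division is legitimate and the second identity in \eqref{eq:LR} follows.

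For the first identity, I would proceed analogously from Lemma \ref{lem:thRec}(i). First I note that $q \neq 0$: otherwise the relation $q\theta_{i+1} = \theta_i + \beta$ would force all $\theta_i$ to equal $-\beta$, contradicting the multiplicity-freeness of $A$. Rewriting the recursion as $\theta_{i+1} = q^{-1}\theta_i + q^{-1}\beta$, the same subtraction trick gives $\theta_{i+1} - \theta_i = q^{-1}(\theta_i - \theta_{i-1})$, hence by induction $\theta_{\ell+1} - \theta_\ell = q^{-\ell}(\theta_1 - \theta_0)$, and telescoping yields the first identity in \eqref{eq:LR} after dividing by $\theta_1 - \theta_0 \neq 0$.

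There is no substantive obstacle here: the proof is a routine telescoping argument. The only small points to be careful about are (a) checking $q \neq 0$ before inverting it, and (b) invoking multiplicity-freeness from Lemma \ref{lem:mf} to justify the two divisions.
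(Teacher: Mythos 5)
Your proof is correct and follows essentially the same route as the paper: both arguments telescope the recursion of Lemma \ref{lem:thRec}, yours by first passing to consecutive differences $\theta_{\ell+1}-\theta_\ell = q^{-\ell}(\theta_1-\theta_0)$, the paper's by telescoping $\theta_{i+1}-q^{-i-1}\theta_0$ directly. Your explicit checks that $q\neq 0$ and $\theta_1\neq\theta_0$, $\theta^*_1\neq\theta^*_0$ are valid and slightly more careful than the paper, which leaves these points implicit.
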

\begin{proof} We verify the equation on the left. Using Lemma \ref{lem:thRec}(i),
\begin{align*}
&\theta_{i+1} - q^{-i-1}\theta_0 \\
&= \theta_{i+1} - q^{-1} \theta_i + q^{-1}(\theta_i - q^{-1} \theta_{i-1}) + q^{-2}(\theta_{i-1} - q^{-1} \theta_{i-2}) + \cdots + q^{-i} (\theta_1 - q^{-1} \theta_0)
\\
&= (1 + q^{-1} + q^{-2} + \cdots + q^{-i}) q^{-1} \beta.
\end{align*}
\noindent  Observe that
\begin{align*}
\theta_{i+1}-\theta_0 &= \theta_{i+1} - q^{-i-1}\theta_0 + (q^{-i-1}-1)\theta_0 \\
&= \bigl(1+ q^{-1} + q^{-2} + \cdots + q^{-i} \bigr) \bigl( q^{-1} \beta + (q^{-1}-1) \theta_0\bigr) \\
&= \bigl(1+ q^{-1} + q^{-2} + \cdots + q^{-i} \bigr) (\theta_1 - \theta_0).
\end{align*}
This verifies the equation on the left in \eqref{eq:LR}. The equation on the right in \eqref{eq:LR} is similarly verified.
\end{proof}

\begin{lemma} \label{lem:sumZero} We have  $\sum_{\ell=0}^d q^\ell = 0$, and $\sum_{\ell=0}^i q^\ell \not=0$ for $0 \leq i \leq d-1$.
\end{lemma}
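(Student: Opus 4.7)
The plan is to derive both parts directly from Lemma \ref{lem:pChar}, exploiting the circular convention that the subscripts on $\theta^*$ (and $E^*$) live modulo $n=d+1$. The key observation is that Lemma \ref{lem:pChar} gives a closed form for $\theta^*_{i+1}-\theta^*_0$ in terms of the partial geometric sums $\sum_{\ell=0}^i q^\ell$, and these partial sums interact with the eigenvalues $\theta^*_0,\theta^*_1,\ldots,\theta^*_d$ in a controlled way because, by Lemma \ref{lem:mf}, $A^*$ is multiplicity-free so these $d+1$ scalars are mutually distinct.

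First I would verify $\sum_{\ell=0}^d q^\ell = 0$ by specializing Lemma \ref{lem:pChar} at $i=d$. Since subscripts on $\theta^*$ are read modulo $n=d+1$, we have $\theta^*_{d+1}=\theta^*_0$, so the left-hand side
\[
\frac{\theta^*_{d+1}-\theta^*_0}{\theta^*_1-\theta^*_0}
\]
vanishes (the denominator is nonzero because $\theta^*_0 \neq \theta^*_1$ by multiplicity-freeness). Therefore $\sum_{\ell=0}^d q^\ell=0$, which is the first claim.

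For the second claim, fix $i$ with $0\le i\le d-1$. Then $1 \le i+1 \le d$, so $\theta^*_{i+1}$ is one of the eigenvalues $\theta^*_1,\ldots,\theta^*_d$, which are all distinct from $\theta^*_0$ by Lemma \ref{lem:mf}. Consequently $\theta^*_{i+1}-\theta^*_0\neq 0$, and applying Lemma \ref{lem:pChar} again yields $\sum_{\ell=0}^i q^\ell \neq 0$, as required.

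There is essentially no obstacle here: the lemma is a clean bookkeeping consequence of the circular indexing combined with the three-term recursion of Lemma \ref{lem:thRec}(ii) already packaged inside Lemma \ref{lem:pChar}. The only subtlety worth flagging in the write-up is the use of the circular convention $\theta^*_{d+1}=\theta^*_0$, which is what forces the full geometric sum to vanish and is ultimately the algebraic incarnation of the ``circular'' part of the circular bidiagonal hypothesis.
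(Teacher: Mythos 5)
Your proposal is correct and is essentially identical to the paper's proof: both deduce the result from the right-hand identity of Lemma \ref{lem:pChar}, using the circular convention $\theta^*_{d+1}=\theta^*_0$ for the vanishing of the full sum and the distinctness of the eigenvalues $\theta^*_0,\ldots,\theta^*_d$ for the nonvanishing of the partial sums. No issues.
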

\begin{proof} We have $\theta^*_{d+1}=\theta^*_0$, and $\theta^*_{i+1} \not=\theta^*_0$ for $0 \leq i \leq d-1$. The result follows
in view of Lemma \ref{lem:pChar}.
\end{proof}

\noindent Recall $n=d+1$.

\begin{lemma} \label{lem:charF} The scalar $q$ is related to ${\rm Char}(\mathbb F)$ in the following way.
\begin{enumerate}
\item[\rm (i)] Assume that ${\rm Char}(\mathbb F)\not=n$. Then $q$ is a primitive $n^{\rm th}$ root of unity. 
\item[\rm (ii)] Assume that  ${\rm Char}(\mathbb F)=n$. Then $q=1$. 
\end{enumerate}
\end{lemma}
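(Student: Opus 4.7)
The plan is to extract everything from Lemma \ref{lem:sumZero}, which gives $\sum_{\ell=0}^d q^\ell = 0$ together with $\sum_{\ell=0}^i q^\ell \neq 0$ for $0 \leq i \leq d-1$. The natural split is on whether $q = 1$ or $q \neq 1$, and each case immediately says something about ${\rm Char}(\mathbb F)$ versus $n$.

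First I would handle the case $q \neq 1$. Then $\sum_{\ell=0}^i q^\ell = (q^{i+1} - 1)/(q-1)$, so the vanishing condition at $i = d$ gives $q^n = 1$, while the nonvanishing conditions for $0 \leq i \leq d-1$ give $q^j \neq 1$ for $1 \leq j \leq d$. Hence $q$ is a primitive $n^{\rm th}$ root of unity. Next I would handle $q = 1$: the sums collapse to $\sum_{\ell=0}^i 1 = i+1$, so the conditions become $n \equiv 0 \pmod{{\rm Char}(\mathbb F)}$ while $j \not\equiv 0 \pmod{{\rm Char}(\mathbb F)}$ for $1 \leq j \leq d$. Since ${\rm Char}(\mathbb F)$ is either $0$ or a prime $p$ and the smallest positive integer it kills must be ${\rm Char}(\mathbb F)$ itself, the first prime multiple of ${\rm Char}(\mathbb F)$ occurring in $\{1,\ldots,n\}$ is $n$, forcing ${\rm Char}(\mathbb F) = n$ (and so $n$ is prime).

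With both cases in hand, (i) and (ii) follow by contraposition. For (i), assuming ${\rm Char}(\mathbb F) \neq n$, we cannot be in the $q=1$ case (which forces ${\rm Char}(\mathbb F) = n$), so $q \neq 1$ and hence $q$ is a primitive $n^{\rm th}$ root of unity. For (ii), assuming ${\rm Char}(\mathbb F) = n$ (hence $n$ is prime), we invoke the identity $\lambda^n - 1 = (\lambda-1)^n$ in $\mathbb F[\lambda]$; then in the case $q \neq 1$ we would have $q^n = 1$, i.e.\ $(q-1)^n = 0$, forcing $q = 1$, a contradiction. So $q = 1$.

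There is no real obstacle here; the argument is a short piece of bookkeeping with geometric sums and the fact that the characteristic of a field is either $0$ or prime. The only point requiring mild care is the step that pinpoints ${\rm Char}(\mathbb F) = n$ (rather than merely a divisor of $n$) in the $q = 1$ case, which is where the nonvanishing of \emph{all} intermediate partial sums $1, 2, \ldots, d$ in $\mathbb F$ is essential.
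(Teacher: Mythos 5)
Your proposal is correct and follows essentially the same route as the paper: both split on $q=1$ versus $q\neq 1$, read off from Lemma \ref{lem:sumZero} that in the first case $n=0$ while $1,\ldots,d$ are nonzero in $\mathbb F$ (forcing ${\rm Char}(\mathbb F)=n$) and in the second case that $q$ is a primitive $n^{\rm th}$ root of unity, and both use the identity $q^n-1=(q-1)^n$ in characteristic $n$ to rule out $q\neq 1$ there. Nothing further is needed.
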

\begin{proof} First suppose that $q=1$. Then by Lemma \ref{lem:sumZero}, $n=0$ in $\mathbb F$ and $1,2,\ldots, d$ are nonzero in $\mathbb F$.  Therefore
${\rm Char}(\mathbb F) = n$. Next suppose that $q \not=1$. Then by Lemma \ref{lem:sumZero}, $q^n = 1$ and $q^j \not=1$ for $1 \leq j\leq d$.
Therefore $q$ is a primitive $n^{\rm th}$ root of unity. In this case ${\rm Char}(\mathbb F)\not=n$; otherwise
  $0 = q^n-1 = (q-1)^n$, forcing $q=1$ for a contradiction.
  The result follows from these comments.
  \end{proof}

\noindent In Definitions  \ref{def:eigval}, \ref{def:ai} and Proposition  \ref{prop:AAs}, we introduced various parameters that describe the circular bidiagonal pair $A, A^*$. Next, we consider how these parameters
are affected by an affine transformation of $A, A^*$.
\medskip

\noindent 
Pick scalars $s,s^*, t, t^*$ in $\mathbb F$ with $s, s^*$ nonzero. Consider the circular bidiagonal pair
\begin{align}
 A^\vee = s A + t I, \qquad \qquad (A^*)^\vee = s^* A^* + t^* I.
 \label{eq:st}
 \end{align}
 Note that $\lbrace E_i \rbrace_{i=0}^d$ and $\lbrace E^*_i \rbrace_{i=0}^d$ are orderings of the primitive idempotents of $A^\vee$
 and $(A^*)^\vee$, respectively. These orderings are standard. For $0 \leq i \leq d$  let $\theta^\vee_i$ (resp. $(\theta^*_i)^\vee$) denote the eigenvalue of $A^\vee$ (resp. $(A^*)^\vee$)
 for $E_i$ (resp. $E^*_i$). Also, define
 \begin{align*}
 a^\vee_i = {\rm tr}(A^\vee E^*_i), \qquad \qquad (a^*_i)^\vee = {\rm tr}\bigl((A^*)^\vee E_i\bigr).
 \end{align*}
 Let $q^\vee, \alpha^\vee, \beta^\vee, \gamma^\vee$ denote the profile of $A^\vee, (A^*)^\vee$.

\begin{lemma} \label{lem:affineAdj} We refer to the circular bidiagonal pair $A^\vee, (A^*)^\vee$ from \eqref{eq:st}.
 In the tables below, we describe various parameters for $A^\vee, (A^*)^\vee$ in terms of the corresponding parameters for $A, A^*$.
\bigskip

\centerline{
\begin{tabular}[t]{c|c}
   {\rm parameter } & {\rm parameter description}
\\
\hline
$\theta_i^\vee$ &  $s \theta_i + t$
\\
$(\theta^*_i)^\vee$ &  $s^* \theta^*_i + t^*$ \\
$a_i^\vee$ &  $s a_i + t$ \\
$(a^*_i)^\vee$ &  $s^* a^*_i + t^*$ 
\end{tabular}}
\bigskip

\centerline{
\begin{tabular}[t]{c|c}
   {\rm parameter } & {\rm parameter description}
\\
\hline
$q^\vee$ &  $q$
\\
$\alpha^\vee$ &  $s^* \alpha + t^*(1-q)$ \\
$\beta^\vee$ &  $s \beta + t(q-1)$ \\
$\gamma^\vee$ &  $s s^* \gamma + t s^* \alpha - s t^* \beta + t t^* (1-q)$ 
\end{tabular}}
\bigskip

 \end{lemma}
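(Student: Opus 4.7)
The plan is to treat the two tables in turn, handling the eigenvalues and traces first and then extracting the profile by substitution into \eqref{lem:five}.

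First I would verify that $\{E_i\}_{i=0}^d$ and $\{E^*_i\}_{i=0}^d$ remain standard orderings of the primitive idempotents of $A^\vee$ and $(A^*)^\vee$; this is because the cyclic condition $E \to F$ in Definition \ref{def:to} uses only the operator $A^* - \alpha I$ for some $\alpha \in \mathbb F$, and under the replacement $A^* \mapsto s^*A^* + t^*I$ this is replaced by $s^*(A^* - \alpha' I)$ for $\alpha' = (\alpha - t^*)/s^*$, which has the same kernel/image structure. Then the eigenvalue rows of the first table are immediate from the elementary fact (recorded in Section 3) that if $E_i$ is the primitive idempotent of $A$ for $\theta_i$ then $E_i$ is the primitive idempotent of $sA + tI$ for $s\theta_i + t$; the same remark applied to $A^*$ gives $(\theta^*_i)^\vee = s^*\theta^*_i + t^*$.

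For the trace rows, I would use linearity of the trace together with $\mathrm{tr}(E^*_i) = 1$:
\begin{align*}
a^\vee_i = \mathrm{tr}(A^\vee E^*_i) = s\,\mathrm{tr}(AE^*_i) + t\,\mathrm{tr}(E^*_i) = sa_i + t,
\end{align*}
and symmetrically $(a^*_i)^\vee = s^*a^*_i + t^*$. No circularity issue arises because $\mathrm{tr}(E^*_i)=1$ was established for primitive idempotents of a multiplicity-free map.

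For the profile table I would invert the affine change of variables, writing $A = s^{-1}(A^\vee - tI)$ and $A^* = (s^*)^{-1}\bigl((A^*)^\vee - t^*I\bigr)$, then substitute into the relation $qAA^* - A^*A + \alpha A - \beta A^* - \gamma I = 0$ from Proposition \ref{prop:AAs}. After multiplying through by $ss^*$ and regrouping, the resulting identity has the form
\begin{align*}
q A^\vee (A^*)^\vee - (A^*)^\vee A^\vee + \bigl(s^*\alpha + t^*(1-q)\bigr) A^\vee - \bigl(s\beta + t(q-1)\bigr)(A^*)^\vee = \bigl(ss^*\gamma + ts^*\alpha - st^*\beta + tt^*(1-q)\bigr) I.
\end{align*}
Uniqueness of the profile (Proposition \ref{prop:AAs}, applied to $A^\vee, (A^*)^\vee$) then forces the tabulated values of $q^\vee, \alpha^\vee, \beta^\vee, \gamma^\vee$. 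The only real work is the bookkeeping in this expansion: one must carefully track the cross terms $-t^*A^\vee$, $-t(A^*)^\vee$, and $tt^*I$ that arise from $(A^\vee - tI)\bigl((A^*)^\vee - t^*I\bigr)$ versus $\bigl((A^*)^\vee - t^*I\bigr)(A^\vee - tI)$, and observe that the commutator terms $A^\vee \cdot t$ and $t \cdot A^\vee$ cancel while the asymmetric factor $q$ produces the coefficients $(1-q)$ and $(q-1)$. I do not expect any conceptual obstacle; the result is essentially the observation that the relation in Proposition \ref{prop:AAs} is preserved in form under affine transformations, and the displayed coefficients merely record how the constants in that relation transform.
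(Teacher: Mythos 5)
Your proposal is correct and follows essentially the same route as the paper, whose proof is just a terse pointer to Definitions 4.8 and 4.10, the discussion below (23), and the uniqueness statement in Proposition 4.22; you have simply filled in the details (the standardness of the orderings, linearity of the trace with ${\rm tr}(E^*_i)=1$, and the substitution $A=s^{-1}(A^\vee-tI)$, $A^*=(s^*)^{-1}((A^*)^\vee-t^*I)$ into the profile relation followed by uniqueness). The expanded coefficients you record match the tabulated values exactly.
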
 
\begin{proof} The first table is verified using Definitions \ref{def:eigval}, \ref{def:ai} and the discussion below \eqref{eq:st}.
 The second table is verified using Proposition  \ref{prop:AAs} and the comment above Lemma \ref{lem:affineAdj}.
\end{proof}

\noindent Next, we define what it means for the circular bidiagonal pair $A, A^*$ to be normalized. 

\begin{definition} \label{def:norm} \rm Let $E$ (resp. $E^*$) denote a primitive idempotent of $A$ (resp. $A^*$). Let $\theta$ (resp. $\theta^*$) denote the
corresponding eigenvalue.
The circular bidiagonal pair $A, A^*$ is said to be {\it normalized with respect to $E$ and $E^*$} whenever $\alpha, \beta, \theta, \theta^*$
satisfy the requirements in the table below:
\bigskip

\centerline{
\begin{tabular}[t]{c|cccc}
   {\rm case} & $\alpha $ & $\beta$ & $\theta $ & $\theta^*$
\\
\hline
${\rm Char}(\mathbb F)\not=n$ &  
$0$ & $0$ & $1 $
&$1$
\\
${\rm Char}(\mathbb F)=n$ &  
$1$ & $1$ & $0 $
&$0$
\end{tabular}}
\bigskip

\end{definition}
\noindent Next, we put  $A, A^*$ in normalized form by applying an affine transformation. 

\begin{lemma} \label{prop:main} We normalize the circular bidiagonal pair $A, A^*$ as follows.
\begin{enumerate}
\item[\rm (i)] Assume that ${\rm Char}(\mathbb F)\not=n$. Then the circular bidiagonal pair
\begin{align*}
 \frac{(q-1)A-\beta I}{(q-1)\theta_0 - \beta}, \qquad \qquad \frac{(1-q)A^* - \alpha I}{(1-q)\theta^*_0 - \alpha}
 \end{align*}
 is normalized with respect to $E_0$ and $E^*_0$.
 \item[\rm (ii)] Assume that ${\rm Char}(\mathbb F)=n$. Then the circular bidiagonal pair
\begin{align*}
 \frac{A - \theta_0 I}{\beta}, \qquad \qquad  \frac{A^*-\theta^*_0  I}{\alpha}
\end{align*}
is normalized  with respect to $E_0$ and $E^*_0$.
\end{enumerate}
\end{lemma}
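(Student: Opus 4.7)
The plan is to verify each normalization directly from the two tables in Lemma \ref{lem:affineAdj}. In both cases the proposed affine transformation has the form $A^\vee = sA + tI$ and $(A^*)^\vee = s^*A^* + t^*I$ for explicit scalars $s,s^*,t,t^*$, and we need to check (a) that $s$ and $s^*$ are nonzero so that the transformation is legitimate, (b) that $\theta^\vee_0$ and $(\theta^*_0)^\vee$ take the values prescribed in Definition \ref{def:norm}, and (c) that $\alpha^\vee$ and $\beta^\vee$ take the prescribed values. The scalar $q$ is preserved by the first row of the second table in Lemma \ref{lem:affineAdj}, so the ``case'' in Definition \ref{def:norm} is automatically the correct one.

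For case (i), read off $s = (q-1)/\bigl((q-1)\theta_0-\beta\bigr)$, $t = -\beta/\bigl((q-1)\theta_0-\beta\bigr)$, and symmetric expressions for $s^*,t^*$. The denominators are nonzero by Lemma \ref{lem:nonz}(i), which guarantees $\beta \neq (q-1)\theta_0$ and $\alpha \neq (1-q)\theta^*_0$ when $q \neq 1$ (and $q \neq 1$ here by Lemma \ref{lem:charF}(i)). Then Lemma \ref{lem:affineAdj} gives
\begin{align*}
\theta^\vee_0 = s\theta_0 + t = \frac{(q-1)\theta_0 - \beta}{(q-1)\theta_0 - \beta} = 1, \qquad \beta^\vee = s\beta + t(q-1) = \frac{(q-1)\beta - \beta(q-1)}{(q-1)\theta_0 - \beta} = 0,
\end{align*}
and the dual computations give $(\theta^*_0)^\vee = 1$ and $\alpha^\vee = 0$.

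For case (ii), Lemma \ref{lem:charF}(ii) gives $q=1$, so Lemma \ref{lem:nonz}(ii) yields $\alpha \neq 0$ and $\beta \neq 0$, ensuring the denominators in $s = 1/\beta$, $s^* = 1/\alpha$ are nonzero. With $t=-\theta_0/\beta$ and $t^*=-\theta^*_0/\alpha$, Lemma \ref{lem:affineAdj} specialized to $q=1$ produces
\begin{align*}
\theta^\vee_0 = \frac{\theta_0 - \theta_0}{\beta} = 0, \qquad \beta^\vee = s\beta + t(q-1) = s\beta = 1,
\end{align*}
together with the dual identities $(\theta^*_0)^\vee = 0$ and $\alpha^\vee = 1$, matching the second row of Definition \ref{def:norm}. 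There is no real obstacle here: the entire argument is a substitution into the two tables of Lemma \ref{lem:affineAdj}; the only subtle ingredient is invoking Lemma \ref{lem:nonz} to guarantee that the chosen scalars $s,s^*$ are well-defined and nonzero, and invoking Lemma \ref{lem:charF} to know which of the two cases we are in.
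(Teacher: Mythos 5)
Your proof is correct and follows exactly the route the paper intends: the paper's own proof is a one-line remark that the claim "is readily checked using Lemmas \ref{lem:nonz}, \ref{lem:charF}, \ref{lem:affineAdj} and Definition \ref{def:norm}," and you have simply carried out that check explicitly, including the needed nonvanishing of the denominators and of $s, s^*$. Nothing is missing.
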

\begin{proof} This is readily checked using Lemmas \ref{lem:nonz}, \ref{lem:charF},  \ref{lem:affineAdj} and Definition \ref{def:norm}.
\end{proof} 


\begin{lemma} \label{lem:sofar}  Assume that the circular bidiagonal pair $A, A^*$ is normalized  with respect to $E_0$ and $E^*_0$. 
\begin{enumerate}
\item[\rm (i)] Assume that ${\rm Char}(\mathbb F)\not=n$. Then
\begin{align*}
\frac{qA A^*-A^*A}{q-1} = \varepsilon I,
\end{align*}
where $\varepsilon = \gamma/(q-1)$. Moreover
\begin{align*}
\theta_i = q^{-i}, \qquad \theta^*_i = q^i, \qquad a_i = q^{-i} \varepsilon, \qquad a^*_i = q^i \varepsilon, \qquad \qquad (0 \leq i \leq d).
\end{align*}
\item[\rm (ii)] Assume that ${\rm Char}(\mathbb F)=n$. Then 
\begin{align*}
 AA^*-A^*A+A-A^*= \gamma I.
\end{align*}
Moreover
\begin{align*}
\theta_i = i, \qquad \theta^*_i = i, \qquad a_i = i+\gamma, \qquad a^*_i = i-\gamma, \qquad \qquad (0 \leq i \leq d).
\end{align*}
\end{enumerate}
\end{lemma}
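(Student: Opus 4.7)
The plan is straightforward: the content of Lemma \ref{lem:sofar} is to unpack what the normalization conditions of Definition \ref{def:norm} force upon the already-derived formulas in Proposition \ref{prop:AAs} and Lemmas \ref{lem:thRec}, \ref{lem:ai}. First I would record that by Lemma \ref{lem:charF}, the case split ${\rm Char}(\mathbb F)\neq n$ versus ${\rm Char}(\mathbb F)=n$ matches the dichotomy $q\neq 1$ versus $q=1$, so that in case (ii) I may freely use $q=1$ alongside the normalized values $\alpha=\beta=1$, $\theta_0=\theta^*_0=0$.

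For case (i), with $\alpha=\beta=0$, Proposition \ref{prop:AAs} immediately gives $qAA^*-A^*A=\gamma I$, whence $(qAA^*-A^*A)/(q-1)=\varepsilon I$ with $\varepsilon=\gamma/(q-1)$ well-defined because $q\neq 1$. Lemma \ref{lem:thRec}(i) reduces to $q\theta_{i+1}=\theta_i$, so induction starting from $\theta_0=1$ yields $\theta_i=q^{-i}$; symmetrically Lemma \ref{lem:thRec}(ii) gives $\theta^*_i=q^i$. Plugging these into Lemma \ref{lem:ai}(i) yields $a_i\theta^*_i(q-1)=\gamma$, so $a_i=\gamma/((q-1)q^i)=q^{-i}\varepsilon$; analogously Lemma \ref{lem:ai}(ii) gives $a^*_i=q^i\varepsilon$. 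Before concluding I would verify there is no division-by-zero: $\theta^*_i=q^i\neq 0$ and $q-1\neq 0$, so the step is legitimate.

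For case (ii), with $q=1$, $\alpha=\beta=1$, $\theta_0=\theta^*_0=0$, Proposition \ref{prop:AAs} becomes the asserted relation $AA^*-A^*A+A-A^*=\gamma I$. Lemma \ref{lem:thRec}(i) now reads $\theta_{i+1}=\theta_i+1$, giving $\theta_i=i$ by induction, and Lemma \ref{lem:thRec}(ii) reads $\theta^*_{i+1}=\theta^*_i+1$, giving $\theta^*_i=i$. Substituting into Lemma \ref{lem:ai}(i) with $q-1=0$, $\alpha=1$ collapses the left side to $a_i$, and the right side becomes $\theta^*_i+\gamma=i+\gamma$, so $a_i=i+\gamma$. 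Lemma \ref{lem:ai}(ii) similarly yields $a^*_i=\theta_i-\gamma=i-\gamma$.

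There is essentially no obstacle: every claim in the lemma is a direct consequence of prior results after substituting the normalized values. The only point that requires a moment's care is the non-vanishing of the denominators in case (i), which as noted is guaranteed by $q$ being a primitive $n^{\rm th}$ root of unity with $n\geq 2$; the rest is bookkeeping.
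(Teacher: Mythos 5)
Your proof is correct and follows exactly the route the paper takes, namely substituting the normalized values from Definition \ref{def:norm} into Proposition \ref{prop:AAs} and Lemmas \ref{lem:thRec}, \ref{lem:ai} (with Lemma \ref{lem:charF} supplying $q\neq 1$ versus $q=1$); the paper states this in one line and you have simply written out the bookkeeping. The attention to non-vanishing denominators in case (i) is a nice touch but raises no issue beyond what the paper's cited lemmas already guarantee.
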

\begin{proof} Evaluate Proposition \ref{prop:AAs} and Lemmas \ref{lem:thRec}, \ref{lem:ai} using Definition \ref{def:norm}.
\end{proof}

\begin{lemma} \label{lem:notamong} Assume that the circular bidiagonal pair $A, A^*$ is normalized  with respect to $E_0$ and $E^*_0$.
\begin{enumerate}
\item[\rm (i)] Assume that ${\rm Char}(\mathbb F)\not=n$. Then the scalar $\varepsilon$ from Lemma \ref{lem:sofar}(i) is not among $1,q, q^2,\ldots, q^d$.
\item[\rm (ii)]  Assume that ${\rm Char}(\mathbb F)=n$. Then the scalar $\gamma$ from Lemma \ref{lem:sofar}(ii) is not among $0,1,2,\ldots, d$.
\end{enumerate}
\end{lemma}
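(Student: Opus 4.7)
The plan is to derive this lemma as a direct consequence of Lemma \ref{lem:match}, combined with the explicit formulas for $\theta_i$ and $a_i$ supplied by Lemma \ref{lem:sofar}. The key point is that Lemma \ref{lem:match} already tells us that no eigenvalue $\theta$ of $A$ can coincide with any $a_i$ $(0\le i\le d)$; once the pair is normalized, both of these lists are known explicitly, and comparing them gives the forbidden values of $\varepsilon$ and $\gamma$.

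For part (i), I would argue as follows. Since the pair is normalized with respect to $E_0, E^*_0$, Lemma \ref{lem:sofar}(i) gives $\theta_i = q^{-i}$ and $a_i = q^{-i}\varepsilon$ for $0\le i\le d$; in particular $\theta_0 = 1$ is an eigenvalue of $A$. By Lemma \ref{lem:match} applied with $\theta = \theta_0 = 1$, we have $1 \neq a_i = q^{-i}\varepsilon$ for each $0\le i\le d$, so $\varepsilon \neq q^i$ for all $0\le i\le d$. Since $q$ is a primitive $n^{\text{th}}$ root of unity (Lemma \ref{lem:charF}(i)), the set $\{q^0,q^1,\dots,q^d\}$ equals $\{1,q,q^2,\dots,q^d\}$, so $\varepsilon \notin \{1,q,q^2,\dots,q^d\}$, as required.

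For part (ii), by Lemma \ref{lem:sofar}(ii) we have $\theta_i = i$ and $a_i = i+\gamma$ for $0\le i\le d$; in particular $\theta_0 = 0$ is an eigenvalue of $A$. Applying Lemma \ref{lem:match} with $\theta = \theta_0 = 0$ yields $0 \neq a_i = i+\gamma$, hence $\gamma \neq -i$ for $0\le i\le d$. Since $\mathrm{Char}(\mathbb F)=n$, we have $-i \equiv n-i\pmod{n}$, and as $i$ ranges over $\{0,1,\dots,d\}$ the values $\{-i \bmod n\}$ exhaust $\{0,1,\dots,d\}$. Therefore $\gamma \notin \{0,1,2,\dots,d\}$, which is the desired conclusion.

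There is no real obstacle here; the proof is a two-line unpacking of Lemma \ref{lem:match} using the normalized data of Lemma \ref{lem:sofar}. The only point requiring a sentence of care is the mod $n$ arithmetic in part (ii), where one must verify that $\{-i : 0\le i\le d\}$ and $\{0,1,\dots,d\}$ coincide in $\mathbb F$ under the characteristic $n$ hypothesis.
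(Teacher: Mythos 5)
Your proof is correct and is exactly the paper's argument: the paper's proof reads simply ``Use Lemma \ref{lem:match} and the data in Lemma \ref{lem:sofar},'' and you have supplied precisely that unpacking, including the correct handling of the mod $n$ arithmetic in part (ii).
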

\begin{proof} Use Lemma \ref{lem:match} and the data in Lemma \ref{lem:sofar}.
\end{proof} 

\begin{proposition} \label{prop:almostdone} 
Assume that the circular bidiagonal pair $A, A^*$ is normalized  with respect to $E_0$ and $E^*_0$. 
\begin{enumerate}
\item[\rm (i)] Assume that ${\rm Char}(\mathbb F)\not=n$. Then the circular bidiagonal pair $A, A^*$ is isomorphic to ${\rm CBP}(\mathbb F;d,q,\varepsilon)$,
where $q, \varepsilon$ are from Lemma \ref{lem:sofar}(i).
\item[\rm (ii)] Assume that ${\rm Char}(\mathbb F)=n$.  Then the circular bidiagonal pair $A, A^*$ is isomorphic to 
${\rm CBP}(\mathbb F;d,\gamma)$, where $\gamma$ is from Lemma \ref{lem:sofar}(ii).
\end{enumerate}
\end{proposition}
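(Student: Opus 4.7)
The plan is to exhibit an explicit basis of $V$ on which $A$ and $A^*$ are represented by exactly the matrices appearing in Definitions \ref{def:name1} and \ref{def:name2}, and then use the resulting matrix identification as the isomorphism. Since the normalization hypothesis fixes $\theta_0 = 1$ (resp.\ $\theta_0 = 0$) in case (i) (resp.\ (ii)), there is a canonical choice of eigenvector to work with.

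For both cases, I would proceed as follows. Pick a nonzero $\xi \in E_0 V$, so $\xi$ is an eigenvector of $A$ with eigenvalue $\theta_0$. Applying Lemma \ref{lem:BBasis}, the vectors $\{E^*_i \xi\}_{i=0}^d$ form a basis of $V$ satisfying Definition \ref{def:cbp}(i). Let $B, B^* \in {\rm Mat}_{d+1}(\mathbb F)$ represent $A, A^*$ with respect to this basis. Lemma \ref{lem:BBasis2} then gives explicit formulas: $B^*$ is diagonal with $B^*_{i,i} = \theta^*_i$, and $B$ is circular bidiagonal with $B_{i,i} = a_i$, $B_{0,d} = \theta_0 - a_0$, and $B_{i,i-1} = \theta_0 - a_i$ for $1 \le i \le d$.

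Now I would just match these entries against the defining matrices of the model pairs, feeding in the parameter values from Lemma \ref{lem:sofar}. In case (i), with $\theta_0 = 1$, $\theta^*_i = q^i$, and $a_i = q^{-i}\varepsilon$, we get $B^*_{i,i} = q^i$, $B_{i,i} = q^{-i}\varepsilon$, $B_{0,d} = 1 - \varepsilon$, and $B_{i,i-1} = 1 - q^{-i}\varepsilon$ for $1 \le i \le d$; these are precisely the entries of $A(q,\varepsilon)$ and $A^*(q)$ from Lemma \ref{ex:ddq}, and by Lemma \ref{lem:notamong}(i) the condition $\varepsilon \notin \{1,q,\ldots,q^d\}$ needed to invoke that construction holds. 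In case (ii), with $\theta_0 = 0$, $\theta^*_i = i$, and $a_i = i+\gamma$, we get $B^*_{i,i} = i$, $B_{i,i} = i+\gamma$, $B_{0,d} = -\gamma$, and $B_{i,i-1} = -i-\gamma$, matching the matrices of Lemma \ref{ex:dd}, and Lemma \ref{lem:notamong}(ii) supplies the needed constraint on $\gamma$.

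Finally, I would conclude the proof by defining $\sigma : V \to \mathbb F^{d+1}$ to be the $\mathbb F$-linear map sending $E^*_i \xi$ to the $i$th standard basis column of $\mathbb F^{d+1}$. This $\sigma$ is a vector space isomorphism, and by the matrix identification above $\sigma A = B \sigma$ and $\sigma A^* = B^* \sigma$ where $B, B^*$ are exactly the matrices of ${\rm CBP}(\mathbb F; d, q, \varepsilon)$ in case (i) or ${\rm CBP}(\mathbb F; d, \gamma)$ in case (ii). By Definition \ref{def:cbpIso}, this is the desired isomorphism. There is no real obstacle here; the proposition is essentially a bookkeeping consequence of Lemmas \ref{lem:BBasis}, \ref{lem:BBasis2}, \ref{lem:sofar}, and \ref{lem:notamong}. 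The only point requiring minor care is that the normalized eigenvalue ordering $\{\theta^*_i\}$ matches the prescribed ordering in the model, which is guaranteed because both orderings are the standard ones starting at $E^*_0$.
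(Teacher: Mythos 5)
Your proof is correct and follows essentially the same route as the paper: choose an eigenvector $\xi$ for the eigenvalue $\theta_0$, pass to the basis $\lbrace E^*_i \xi\rbrace_{i=0}^d$, and read off the matrix entries from Lemma \ref{lem:BBasis2} together with the data in Lemmas \ref{lem:sofar} and \ref{lem:notamong}. The only detail the paper states that you leave implicit is the appeal to Lemma \ref{lem:charF} to confirm that $q$ is a primitive $n^{\rm th}$ root of unity, so that ${\rm CBP}(\mathbb F;d,q,\varepsilon)$ is actually defined; this is a trivial addition.
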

\begin{proof} (i) 
By Lemma \ref{lem:charF}(i), $q$ is a primitive $n^{\rm th}$ root of unity. By Lemma \ref{lem:notamong}(i), $\varepsilon$ is not among $1,q,q^2, \ldots, q^d$. 
The circular bidiagonal pair ${\rm CBP}(\mathbb F;d,q,\varepsilon)$ is described in Lemma \ref{ex:ddq}.
We have $\theta_0=1$ by Lemma \ref{lem:sofar}(i). Therefore $1$ is an eigenvalue of $A$;
let $0 \not=\xi\in V$ denote a corresponding eigenvector. Consider the basis $\lbrace E^*_i \xi \rbrace_{i=0}^d$ of $V$ from Lemma \ref{lem:BBasis}.
 Let $B$ (resp. $B^*$) denote the matrix in ${\rm Mat}_{d+1}(\mathbb F)$ that represents $A$ (resp. $A^*$) with respect to $\lbrace E^*_i \xi \rbrace_{i=0}^d$.
Using Lemma \ref{lem:BBasis2} (with $\theta=1$) and the data in Lemma \ref{lem:sofar}(i), we find that ${\rm CBP}(\mathbb F;d,q,\varepsilon)$ 
is equal to $B, B^*$. The result follows.
 \\ 
        \noindent (ii) By Lemma \ref{lem:charF}(ii),  ${\rm Char}(\mathbb F)=n$.  By Lemma \ref{lem:notamong}(ii),
$\gamma$ is not among $0,1,2,\ldots, d$. 
The circular bidiagonal pair ${\rm CBP}(\mathbb F;d, \gamma)$ is described in Lemma \ref{ex:dd}.
We have $\theta_0=0$ by Lemma \ref{lem:sofar}(ii). Therefore $0$ is an eigenvalue of $A$;
let $0 \not=\xi\in V$ denote a corresponding eigenvector. Consider the basis $\lbrace E^*_i \xi \rbrace_{i=0}^d$ of $V$ from Lemma  \ref{lem:BBasis}.
 Let $B$ (resp. $B^*$) denote the matrix in ${\rm Mat}_{d+1}(\mathbb F)$ that represents $A$ (resp. $A^*$) with respect to  $\lbrace E^*_i \xi \rbrace_{i=0}^d$.
Using Lemma \ref{lem:BBasis2} (with $\theta=0$) and the data in Lemma \ref{lem:sofar}(ii), we find that ${\rm CBP}(\mathbb F;d, \gamma)$ 
is equal to $B, B^*$. The result follows.
\end{proof}

\noindent Theorem \ref{thm:main} is immediate from Lemma \ref{prop:main} and Proposition \ref{prop:almostdone}.

\section{The proof of Lemma \ref{thm:th2}}

\noindent In this section, we prove Lemma \ref{thm:th2}.
\medskip

\noindent {\it Proof of Lemma \ref{thm:th2}} (i) Assume that
 ${\rm CBP}(\mathbb F; d, q, \varepsilon)$ and ${\rm CBP}(\mathbb F; d, q', \varepsilon')$ are isomorphic. We will show that $q=q'$ and $\varepsilon = \varepsilon'$.
Write $A, A^*$ for
 ${\rm CBP}(\mathbb F; d, q, \varepsilon)$ and $B, B^*$ for ${\rm CBP}(\mathbb F; d, q', \varepsilon')$.
By Definition \ref{def:cbpIso},
 there exists an invertible $P \in {\rm Mat}_{d+1}(\mathbb F)$
such that $PA=BP$ and $PA^*=B^* P$. The matrix $A^*$ is diagonal, and its diagonal entries are mutually distinct.
The  matrix $B^*$ is diagonal, and its diagonal entries are mutually distinct.
Examining the entries of $PA^*= B^*P$, we find that there exists a permutation $p$ of the set
$\lbrace 0,1,2,\ldots, d\rbrace$ such that for $0 \leq i,j\leq d$, 
\begin{align*}
 j = p(i)\quad \Leftrightarrow \quad P_{i,j} \not=0 \quad \Leftrightarrow \quad A^*_{j,j} = B^*_{i,i}.
 \end{align*}
We have $A^*_{0,0}=1=B^*_{0,0}$.  Therefore $p(0)=0$. Next we show that $P$ is diagonal. The matrices $A$ and $B$ are circular bidiagonal. 
For $1 \leq i \leq d$ we examine the $\bigl(i,p(i-1)\bigr)$-entry in $PA=BP$; this gives
\begin{align*}
P_{i,p(i)} A_{p(i),p(i-1)} = B_{i,i-1} P_{i-1,p(i-1)}.
\end{align*}
\noindent By construction $B_{i,i-1}\not=0$ and $P_{i-1,p(i-1)}\not=0$. Therefore $A_{p(i), p(i-1)} \not=0$. Of course $p(i) \not=p(i-1)$, so 
$p(i)=p(i-1)+1$. Using induction and $p(0)=0$, we obtain $p(i)=i$ for $0 \leq i \leq d$. We have shown that $P$ is diagonal. Consequently $P$ commutes
with $A^*$, so $A^*=B^*$. Therefore $q=q'$. Also, for $0 \leq i \leq d$ we have $A_{i,i}=B_{i,i}$, which implies that $\varepsilon=\varepsilon'$.
We are done in one logical direction. We now consider the opposite logical direction. Assume that $q=q' $ and $\varepsilon = \varepsilon'$.
Then
 ${\rm CBP}(\mathbb F; d, q, \varepsilon)$ and ${\rm CBP}(\mathbb F; d, q', \varepsilon')$ are the same, and hence isomorphic.
\\
\noindent (ii) Similar to the proof of (i). \hfill $\Box$

\section{The proof of Lemma \ref{thm:th3}}

\noindent In this section, we prove Lemma \ref{thm:th3}. 
\medskip

\noindent We begin with some comments about the matrices $A=A(q, \varepsilon)$ and $A^*=A^*(q)$ from Lemma \ref{ex:ddq}.

\begin{lemma} \label{lem:p1}
\label{lem:add1} With the above notation, 
\begin{align*}
(A^*-\varepsilon I) A(q, q\varepsilon) = q A(q, \varepsilon) (A^*-\varepsilon I).
\end{align*}
\end{lemma}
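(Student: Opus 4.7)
The plan is to verify the identity by straight matrix multiplication, comparing entries on both sides. Since $A^*-\varepsilon I$ is the diagonal matrix $\mathrm{diag}(1-\varepsilon,q-\varepsilon,\ldots,q^d-\varepsilon)$, left multiplication scales the $i$-th row by $q^i-\varepsilon$ and right multiplication scales the $j$-th column by $q^j-\varepsilon$. Therefore the asserted equation reduces, entry by entry, to
\[
(q^i-\varepsilon)\,A(q,q\varepsilon)_{i,j} \;=\; q\,(q^j-\varepsilon)\,A(q,\varepsilon)_{i,j}
\qquad (0\le i,j\le d).
\]
Both $A(q,\varepsilon)$ and $A(q,q\varepsilon)$ have the same circular bidiagonal support, so only three types of $(i,j)$ need to be checked; all other entries are zero on both sides.

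First I would handle the diagonal case $j=i$. Here $A(q,q\varepsilon)_{i,i}=q^{1-i}\varepsilon$ and $A(q,\varepsilon)_{i,i}=q^{-i}\varepsilon$, and both sides equal $q\varepsilon - q^{1-i}\varepsilon^2$. Next the subdiagonal case $j=i-1$ with $1\le i\le d$: I would plug in $A(q,q\varepsilon)_{i,i-1}=1-q^{1-i}\varepsilon$ and $A(q,\varepsilon)_{i,i-1}=1-q^{-i}\varepsilon$ and expand both products to see that each equals $q^i-\varepsilon-q\varepsilon+q^{1-i}\varepsilon^2$.

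The only slightly nontrivial case is the top-right corner $(i,j)=(0,d)$. Here the left side equals $(1-\varepsilon)(1-q\varepsilon)$ and the right side equals $q(q^d-\varepsilon)(1-\varepsilon)$. These agree because $q$ is a primitive $n$-th root of unity with $n=d+1$, so $q^{d+1}=1$, giving $q\cdot q^d = 1$ and hence $q(q^d-\varepsilon) = 1-q\varepsilon$. This is the one step where the root-of-unity hypothesis on $q$ enters, and it is essentially the reason the circular (rather than merely triangular) structure is preserved under the conjugation-like identity. After these three cases, nothing remains to check, and the lemma follows.
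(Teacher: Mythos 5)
Your proof is correct and takes the same route as the paper, which simply states that the identity "is routinely verified by matrix multiplication" using the data in Lemma \ref{ex:ddq}; you have just written out the three entrywise checks explicitly, and your use of $q^{d+1}=1$ in the corner entry is exactly where the root-of-unity hypothesis is needed.
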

\begin{proof} This is routinely verified by matrix multiplication, using the data in Lemma \ref{ex:ddq}.
\end{proof}

\begin{lemma} \label{lem:add2}  The map $A^*-\varepsilon I$ from Lemma \ref{lem:p1} is an isomorphism of circular bidiagonal pairs,  from
$A(q, q\varepsilon), A^*$ to $q A(q, \varepsilon), A^*$.
\end{lemma}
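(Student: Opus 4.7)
The plan is to unpack Definition \ref{def:cbpIso} of isomorphism applied to the map $\sigma = A^* - \varepsilon I$ acting on $V = \mathbb F^{d+1}$, and check the three required conditions: invertibility of $\sigma$, intertwining with the first coordinates $A(q,q\varepsilon)$ and $qA(q,\varepsilon)$, and intertwining with the second coordinates (both equal to $A^*$).

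First I would observe that by Lemma \ref{ex:ddq} the matrix $A^*=A^*(q)={\rm diag}(1,q,q^2,\ldots,q^d)$, so
\begin{align*}
A^*-\varepsilon I = {\rm diag}(1-\varepsilon,\; q-\varepsilon,\; q^2-\varepsilon,\;\ldots,\; q^d-\varepsilon).
\end{align*}
The hypothesis of Lemma \ref{ex:ddq} that $\varepsilon$ is not among $1, q, q^2, \ldots, q^d$ guarantees every diagonal entry is nonzero, so $\sigma$ is invertible and hence an isomorphism of vector spaces $V\to V$.

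Next I would verify the intertwining relations. The relation $\sigma \cdot A(q, q\varepsilon) = q A(q, \varepsilon)\cdot \sigma$ with the first coordinates is precisely the content of Lemma \ref{lem:p1} (already proven by matrix multiplication). The relation with the second coordinates, $\sigma \cdot A^* = A^* \cdot \sigma$, is immediate since $\sigma = A^* - \varepsilon I$ is a polynomial in $A^*$ and therefore commutes with $A^*$. Together these show $\sigma$ satisfies the conditions of Definition \ref{def:cbpIso} with source pair $A(q,q\varepsilon), A^*$ and target pair $qA(q,\varepsilon), A^*$.

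I do not anticipate any real obstacle here: the substantive computation has already been carried out in Lemma \ref{lem:p1}, and Lemma \ref{lem:add2} is essentially a reinterpretation of that identity in the language of isomorphisms of circular bidiagonal pairs. The only subtlety is to make sure that both $A(q,q\varepsilon), A^*$ and $qA(q,\varepsilon), A^*$ are indeed circular bidiagonal pairs on $V$; the first is such by Lemma \ref{ex:ddq} (applied with parameter $q\varepsilon$ in place of $\varepsilon$, noting that $q\varepsilon$ is also not among $1, q, \ldots, q^d$ since multiplication by $q$ permutes that set and $\varepsilon$ avoids it), and the second by Lemma \ref{lem:adjust} applied with $s = q$, $t = 0$, $s^* = 1$, $t^* = 0$.
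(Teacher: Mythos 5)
Your proof is correct and follows essentially the same route as the paper: invertibility of $A^*-\varepsilon I$ from the hypothesis that $\varepsilon$ is not among $1,q,\ldots,q^d$, the intertwining relation from Lemma \ref{lem:p1}, and commutation with $A^*$ since the map is a polynomial in $A^*$. Your additional remark that both pairs are indeed circular bidiagonal pairs (via Lemma \ref{ex:ddq} with $q\varepsilon$ and Lemma \ref{lem:adjust}) is a harmless and reasonable extra check that the paper leaves implicit.
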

\begin{proof} Define the matrix $P=A^*-\varepsilon I$. The matrix $P$ is invertible, since
$\varepsilon$ is not among $1,q,q^2,\ldots, q^d$. By Lemma \ref{lem:add1} and the construction,
\begin{align*}
P A(q, q \varepsilon) = q A(q, \varepsilon) P, \qquad \qquad P A^* = A^* P.
\end{align*}
The result follows in view of Definition \ref{def:cbpIso}.
\end{proof} 

\begin{corollary}\label{cor:add3}
The  circular bidiagonal pairs ${\rm CBP}(\mathbb F; d, q, \varepsilon)$ and
${\rm CBP}(\mathbb F; d, q, q\varepsilon)$ 
are affine equivalent.
\end{corollary}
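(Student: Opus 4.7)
The plan is to combine Lemma \ref{lem:add2} with Definition \ref{def:aff} in a single short step; essentially all the work has already been done in establishing Lemma \ref{lem:add2}, so this corollary is a packaging result.

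First, I would observe that by definition, ${\rm CBP}(\mathbb F; d, q, \varepsilon)$ is the pair $A(q,\varepsilon), A^*(q)$, and ${\rm CBP}(\mathbb F; d, q, q\varepsilon)$ is the pair $A(q, q\varepsilon), A^*(q)$. The obvious bridge between these two is the intermediate pair $qA(q,\varepsilon), A^*(q)$.

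Next, I would note that this intermediate pair is an affine transformation of ${\rm CBP}(\mathbb F; d, q, \varepsilon)$: applying Lemma \ref{lem:adjust} with $s = q$, $t = 0$, $s^* = 1$, $t^* = 0$ takes $A(q,\varepsilon), A^*(q)$ to $qA(q,\varepsilon), A^*(q)$. Meanwhile, Lemma \ref{lem:add2} states precisely that $qA(q,\varepsilon), A^*(q)$ is isomorphic as a circular bidiagonal pair to $A(q, q\varepsilon), A^*(q) = {\rm CBP}(\mathbb F; d, q, q\varepsilon)$. Chaining these two observations and invoking Definition \ref{def:aff} gives the affine equivalence of ${\rm CBP}(\mathbb F; d, q, \varepsilon)$ and ${\rm CBP}(\mathbb F; d, q, q\varepsilon)$.

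There is no real obstacle here: the entire content of the corollary has been absorbed into Lemma \ref{lem:add2}, whose proof in turn rests on the intertwining identity of Lemma \ref{lem:add1} and the invertibility of $A^* - \varepsilon I$ (which holds because $\varepsilon \notin \{1, q, q^2, \ldots, q^d\}$, as guaranteed by the hypotheses on ${\rm CBP}(\mathbb F; d, q, \varepsilon)$). Thus the proof of the corollary should be just two or three lines, unpacking the definition of affine equivalence and citing the two preceding results.
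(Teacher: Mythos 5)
Your proof is correct and follows essentially the same route as the paper, which proves the corollary by citing Definitions \ref{def:name1}, \ref{def:aff} and Lemma \ref{lem:add2}; your version merely makes explicit the intermediate pair $qA(q,\varepsilon), A^*(q)$ and the choice $s=q$, $t=t^*=0$, $s^*=1$ in Lemma \ref{lem:adjust}.
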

\begin{proof} By Definitions \ref{def:name1}, \ref{def:aff} and Lemma \ref{lem:add2}.
\end{proof}

\begin{corollary} \label{cor:n1} The following
circular bidiagonal pairs are mutually affine equivalent:
\begin{align*}
{\rm CBP}(\mathbb F; d, q, q^i \varepsilon ) \qquad\qquad i \in \lbrace 0,1,2,\ldots, d\rbrace.
\end{align*}
\end{corollary}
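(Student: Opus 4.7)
The plan is to deduce this immediately from Corollary \ref{cor:add3} by iteration, provided we know affine equivalence is transitive (and hence an equivalence relation on circular bidiagonal pairs). So I would first spend a short observation establishing that affine equivalence is an equivalence relation. Reflexivity is witnessed by the trivial affine transformation $(s,s^*,t,t^*)=(1,1,0,0)$ and the identity isomorphism. For transitivity, if $\sigma_1$ is an isomorphism from $(sA+tI,\,s^*A^*+t^*I)$ to $(B,B^*)$ and $\sigma_2$ is an isomorphism from $(s'B+t'I,\,(s^*)'B^*+(t^*)'I)$ to $(C,C^*)$, then $\sigma_2\sigma_1$ is an isomorphism from the composite affine transformation $(s'sA+(s't+t')I,\,\ldots)$ of $(A,A^*)$ to $(C,C^*)$; the composite is again an affine transformation since $s,s',s^*,(s^*)'$ are nonzero. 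Symmetry is analogous, using that $s,s^*\neq 0$ lets us invert the affine transformation.

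Next I would apply Corollary \ref{cor:add3} not just with parameter $\varepsilon$ but with $\varepsilon$ replaced by $q^{i}\varepsilon$. For this replacement to make sense, I need to check that ${\rm CBP}(\mathbb F;d,q,q^i\varepsilon)$ is defined for every $i$, i.e., that $q^i\varepsilon\notin\{1,q,q^2,\ldots,q^d\}$. But since $q$ is a primitive $n^{\rm th}$ root of unity with $n=d+1$, the set $\{1,q,\ldots,q^d\}$ is closed under multiplication by $q$ and its inverse; hence $q^i\varepsilon\in\{1,q,\ldots,q^d\}$ would force $\varepsilon\in\{1,q,\ldots,q^d\}$, contradicting the hypothesis of Lemma \ref{ex:ddq}. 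So the hypothesis is preserved.

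Applying Corollary \ref{cor:add3} with $\varepsilon$ replaced by $q^{i}\varepsilon$ gives
\[
{\rm CBP}(\mathbb F;d,q,q^i\varepsilon) \;\text{is affine equivalent to}\; {\rm CBP}(\mathbb F;d,q,q^{i+1}\varepsilon)
\]
for each $i\in\{0,1,\ldots,d-1\}$. By the transitivity of affine equivalence established in the first step, this chain shows that every member of the list ${\rm CBP}(\mathbb F;d,q,q^i\varepsilon)$, $i\in\{0,1,\ldots,d\}$, is affine equivalent to ${\rm CBP}(\mathbb F;d,q,\varepsilon)$, hence they are mutually affine equivalent.

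The argument is essentially bookkeeping once Corollary \ref{cor:add3} is in hand; the only genuine point requiring a moment's care — and the one place where a reader could stumble — is recognizing that affine equivalence is an equivalence relation, since this fact is not isolated as a prior lemma. Everything else is iteration.
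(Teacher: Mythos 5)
Your proof is correct and follows the same route as the paper: the paper's own proof is the one-line "By Corollary \ref{cor:add3}, and since affine equivalence is an equivalence relation," and you have simply filled in the details (transitivity of affine equivalence and the check that $q^i\varepsilon\notin\{1,q,\ldots,q^d\}$) that the paper leaves implicit. No gaps.
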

\begin{proof} By Corollary \ref{cor:add3}, and since affine equivalence is an equivalence relation.
\end{proof}

\noindent Next, we have some comments about the matrices $A=A(\gamma)$ and $A^*$ from Lemma \ref{ex:dd}.

\begin{lemma}\label{lem:ad1} With the above notation,
\begin{align*}
(A^*+\gamma I) A(\gamma-1) = \bigl( A(\gamma)-I\bigr) (A^*+\gamma I).
\end{align*}
\end{lemma}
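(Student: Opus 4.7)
The plan is to verify the claimed identity by direct matrix multiplication, exploiting the fact that $A^* + \gamma I$ is diagonal with $(i,i)$-entry equal to $i+\gamma$. Left-multiplication by $A^*+\gamma I$ scales row $i$ of any matrix by $i+\gamma$, and right-multiplication by $A^*+\gamma I$ scales column $j$ by $j+\gamma$. Thus the identity reduces to checking, for each position $(i,j)$, the scalar equality
\begin{equation*}
(i+\gamma)\, A(\gamma-1)_{i,j} \;=\; (A(\gamma)-I)_{i,j}\, (j+\gamma).
\end{equation*}
Both $A(\gamma-1)$ and $A(\gamma)-I$ have exactly the same support, namely the diagonal, the subdiagonal, and the $(0,d)$-entry, so it suffices to handle those three types of positions.

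First I would dispatch the diagonal: on the left we get $(i+\gamma)(i+\gamma-1)$ and on the right $(i+\gamma-1)(i+\gamma)$, which agree. Next, for the subdiagonal entries $(i,i-1)$ with $1 \le i \le d$, the left side is $(i+\gamma)(-(i-1)-\gamma+1-1)= -(i+\gamma)(i+\gamma-1)$ (after using $A(\gamma-1)_{i,i-1} = -i-\gamma+1$), while the right side is $(-i-\gamma)(i-1+\gamma)$; both equal $-(i+\gamma)(i+\gamma-1)$.

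The main (and only) obstacle is the top-right entry $(0,d)$. Here the left side gives $\gamma(1-\gamma)$ using $A(\gamma-1)_{0,d} = 1-\gamma$ and the factor $0+\gamma$, while the right side gives $(-\gamma)(d+\gamma)$ from $(A(\gamma)-I)_{0,d} = -\gamma$ and the factor $d+\gamma$. These match only after invoking the standing assumption $\mathrm{Char}(\mathbb F)=n=d+1$, which forces $d+\gamma = \gamma-1$ in $\mathbb F$; with this identification both expressions equal $\gamma-\gamma^2$. This step is precisely the place where the characteristic hypothesis enters, paralleling the role of the primitive root of unity assumption in the analogous identity of Lemma~\ref{lem:p1}. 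Once these three entry computations are recorded, the identity follows and the proof is complete.
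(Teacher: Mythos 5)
Your proof is correct and takes the same approach as the paper, which simply states that the identity is ``routinely verified by matrix multiplication, using the data in Lemma \ref{ex:dd}''; you have filled in the entrywise details, correctly isolating the $(0,d)$-entry as the one place where ${\rm Char}(\mathbb F)=d+1$ (i.e.\ $d+\gamma=\gamma-1$) is needed.
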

\begin{proof} This is routinely verified by matrix multiplication, using the data in Lemma \ref{ex:dd}.
\end{proof}

\begin{lemma}\label{lem:ad2} The map $A^*+\gamma I$ from Lemma \ref{lem:ad1} is an isomorphism of circular bidiagonal pairs,
from $A(\gamma-1), A^*$ to $A(\gamma)-I, A^*$.
\end{lemma}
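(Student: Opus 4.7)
The plan is to mirror the proof of Lemma \ref{lem:add2} in the characteristic-$n$ setting. Let $P = A^* + \gamma I$; the goal is to check the three requirements of Definition \ref{def:cbpIso}, namely that $P$ is invertible, that $P$ intertwines the $A$-maps, and that $P$ intertwines the $A^*$-maps.

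First I would verify invertibility of $P$. Since $A^* = \mathrm{diag}(0,1,2,\ldots,d)$, the matrix $P$ is diagonal with $(i,i)$-entry $i+\gamma$ for $0 \le i \le d$. Because ${\rm Char}(\mathbb F)=n=d+1$ and $\gamma$ is not among $0,1,\ldots,d$ (the hypothesis of Lemma \ref{ex:dd}), none of the scalars $-\gamma, -\gamma-1,\ldots,-\gamma-d$ equals zero modulo $n$, so every diagonal entry of $P$ is nonzero and $P$ is invertible. Next, Lemma \ref{lem:ad1} gives immediately
\begin{align*}
P\,A(\gamma-1) = \bigl(A(\gamma)-I\bigr)\,P.
\end{align*}
Finally, since $P$ and $A^*$ are both diagonal, they commute, so $P A^* = A^* P$. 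By Definition \ref{def:cbpIso}, $P$ is an isomorphism of circular bidiagonal pairs from $A(\gamma-1), A^*$ to $A(\gamma)-I, A^*$, provided we know that both of these pairs are in fact circular bidiagonal pairs. The pair $A(\gamma), A^*$ is the pair ${\rm CBP}(\mathbb F;d,\gamma)$ of Lemma \ref{ex:dd}; subtracting $I$ from $A(\gamma)$ yields a circular bidiagonal pair by Lemma \ref{lem:adjust}. The pair $A(\gamma-1), A^*$ is ${\rm CBP}(\mathbb F;d,\gamma-1)$, which is circular bidiagonal by Lemma \ref{ex:dd} provided $\gamma-1$ is not among $0,1,\ldots,d$; this is the same condition as $\gamma \not\in \{0,1,\ldots,d\}$ since ${\rm Char}(\mathbb F)=d+1$ makes the set $\{0,1,\ldots,d\}$ translation-invariant modulo $n$.

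The only step that requires any attention is the bookkeeping surrounding the hypothesis on $\gamma$: one must check that $\gamma-1$ still avoids the forbidden set $\{0,1,\ldots,d\}$ so that $A(\gamma-1)$ is defined, and that $P$ does not develop a zero diagonal entry. Both follow from the characteristic-$n$ arithmetic. The rest of the argument is a direct transcription of Lemma \ref{lem:add2}, with $A^*+\gamma I$ replacing $A^*-\varepsilon I$ and Lemma \ref{lem:ad1} replacing Lemma \ref{lem:p1}.
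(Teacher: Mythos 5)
Your proposal is correct and follows essentially the same route as the paper: define $P=A^*+\gamma I$, note it is invertible because $\gamma\not\in\{0,1,\ldots,d\}$, use Lemma \ref{lem:ad1} for $P A(\gamma-1)=\bigl(A(\gamma)-I\bigr)P$, and note $PA^*=A^*P$ since both are diagonal. The extra bookkeeping you add (that $\gamma-1$ still avoids the forbidden set, so $A(\gamma-1),A^*$ is a bona fide circular bidiagonal pair) is a sensible precaution that the paper leaves implicit, and your argument for it is valid.
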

\begin{proof} Define the matrix $P=A^*+ \gamma I$. The matrix $P$ is invertible, since $\gamma$ is not among $0,1,2,\ldots, d$. By Lemma \ref{lem:ad1} and the construction,
\begin{align*}
P A(\gamma-1) = \bigl( A(\gamma)-I\bigr)  P, \qquad \qquad P A^* = A^* P.
\end{align*}
The result follows in view of Definition \ref{def:cbpIso}.
\end{proof}

\begin{corollary} \label{cor:ad3}
The  circular bidiagonal pairs ${\rm CBP}(\mathbb F; d, \gamma)$ and
${\rm CBP}(\mathbb F; d, \gamma-1)$ 
are affine equivalent.
\end{corollary}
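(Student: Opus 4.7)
The plan is to assemble the corollary directly from the isomorphism supplied by Lemma \ref{lem:ad2} together with the observation that $A(\gamma)-I$ is an affine transformation of $A(\gamma)$. First I would note that under the hypothesis ${\rm Char}(\mathbb F)=n$, the set $\{0,1,2,\ldots,d\}$ is the image of $\mathbb F_n$ inside $\mathbb F$, so the condition $\gamma\notin\{0,1,\ldots,d\}$ automatically implies $\gamma-1\notin\{0,1,\ldots,d\}$. Consequently both ${\rm CBP}(\mathbb F;d,\gamma)$ and ${\rm CBP}(\mathbb F;d,\gamma-1)$ are well-defined circular bidiagonal pairs by Lemma \ref{ex:dd} and Definition \ref{def:name2}.

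Next, I would apply Lemma \ref{lem:ad2}, which states that $A^*+\gamma I$ is an isomorphism of circular bidiagonal pairs from $A(\gamma-1),A^*$ to $A(\gamma)-I,A^*$. Since ${\rm CBP}(\mathbb F;d,\gamma-1)$ is precisely the pair $A(\gamma-1),A^*$, we conclude that ${\rm CBP}(\mathbb F;d,\gamma-1)$ is isomorphic to $A(\gamma)-I,A^*$.

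Finally, I would observe that $A(\gamma)-I,A^*$ is the affine transformation of ${\rm CBP}(\mathbb F;d,\gamma)=A(\gamma),A^*$ obtained by taking $s=1$, $t=-1$, $s^*=1$, $t^*=0$ in Lemma \ref{lem:adjust}. Thus ${\rm CBP}(\mathbb F;d,\gamma-1)$ is isomorphic to an affine transformation of ${\rm CBP}(\mathbb F;d,\gamma)$, which by Definition \ref{def:aff} means the two pairs are affine equivalent.

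There is essentially no obstacle here: the work has already been done in Lemma \ref{lem:ad1} and Lemma \ref{lem:ad2}, and the corollary is just the translation of that isomorphism into the language of affine equivalence between the named pairs ${\rm CBP}(\mathbb F;d,\gamma)$ and ${\rm CBP}(\mathbb F;d,\gamma-1)$.
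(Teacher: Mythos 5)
Your proof is correct and follows the same route as the paper, which simply cites Definitions \ref{def:name2}, \ref{def:aff} and Lemma \ref{lem:ad2}; you have merely spelled out the details (including the useful observation that $\gamma-1\notin\{0,1,\ldots,d\}$ since this set is the prime subfield, so ${\rm CBP}(\mathbb F;d,\gamma-1)$ is indeed defined).
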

\begin{proof} By Definitions \ref{def:name2}, \ref{def:aff} and Lemma \ref{lem:ad2}.
\end{proof}

\begin{corollary} \label{cor:n2} The following
circular bidiagonal pairs are mutually affine equivalent:
\begin{align*}
{\rm CBP}(\mathbb F; d, \gamma + i) \qquad\qquad i \in \lbrace 0,1,2,\ldots, d\rbrace.
\end{align*}
\end{corollary}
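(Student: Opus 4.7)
The plan is to mimic exactly the proof of Corollary \ref{cor:n1} from the $q$-case, using Corollary \ref{cor:ad3} as the one-step building block and invoking transitivity of affine equivalence. The reasoning is purely combinatorial bookkeeping, so no new matrix calculation is needed.

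First I would check that each parameter $\gamma + i$ for $i \in \{0, 1, \ldots, d\}$ is admissible in the sense of Lemma \ref{ex:dd}, i.e.\ that $\gamma + i$ is not among $0, 1, \ldots, d$. Since ${\rm Char}(\mathbb F) = n = d+1$, the set $\{0, 1, \ldots, d\}$ is the prime subfield of $\mathbb F$, which is closed under addition. By hypothesis $\gamma$ lies outside this prime subfield, and $i$ lies inside; hence the sum $\gamma + i$ lies outside it. So each ${\rm CBP}(\mathbb F; d, \gamma + i)$ is well-defined.

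Next, for each integer $i$ with $1 \le i \le d$, I apply Corollary \ref{cor:ad3} with the symbol $\gamma$ there replaced by our $\gamma + i$. This gives that ${\rm CBP}(\mathbb F; d, \gamma + i)$ is affine equivalent to ${\rm CBP}(\mathbb F; d, \gamma + i - 1)$. Chaining these $d$ individual equivalences together, and recalling that affine equivalence is an equivalence relation (as used already in the proof of Corollary \ref{cor:n1}), I conclude that all $d+1$ pairs ${\rm CBP}(\mathbb F; d, \gamma + i)$, $0 \le i \le d$, lie in a single affine equivalence class, hence are mutually affine equivalent.

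There is no real obstacle here: the only non-trivial content is Corollary \ref{cor:ad3} itself, and the only subtlety worth flagging is the admissibility check above. Thus the proof reduces to two lines: invoke Corollary \ref{cor:ad3} at each shift, and combine by transitivity.
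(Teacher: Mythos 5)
Your proposal is correct and follows essentially the same route as the paper, which likewise proves the corollary by invoking Corollary \ref{cor:ad3} at each shift and then using that affine equivalence is an equivalence relation. Your extra remark verifying that each $\gamma+i$ remains admissible (outside the prime subfield $\{0,1,\ldots,d\}$) is a reasonable addition that the paper leaves implicit.
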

\begin{proof} By Corollary \ref{cor:ad3}, and since affine equivalence is an equivalence relation.
\end{proof}

\noindent {\it Proof of Lemma \ref{thm:th3}} (i) Assume that
 ${\rm CBP}(\mathbb F; d, q, \varepsilon)$ and ${\rm CBP}(\mathbb F; d, q', \varepsilon')$ are affine equivalent. We will show that $q=q'$ and 
 $\varepsilon'  \in \lbrace  \varepsilon, q \varepsilon, q^2 \varepsilon, \ldots, q^d \varepsilon \rbrace$.
Write $A, A^*$ for  ${\rm CBP}(\mathbb F; d, q, \varepsilon)$ and $B, B^*$ for  ${\rm CBP}(\mathbb F; d, q', \varepsilon')$.
The profile of $A,A^*$ is  $q, \alpha, \beta, \gamma$ where
\begin{align} \label{eq:abc}
\alpha = 0, \qquad \quad \beta=0, \qquad \quad \gamma = \varepsilon (q-1).
\end{align}
The profile of $B,B^*$ is $q', \alpha', \beta', \gamma'$ 
where
\begin{align} \label{eq:apbpcp}
\alpha' = 0, \qquad \quad \beta'=0, \qquad \quad \gamma' = \varepsilon' (q'-1).
\end{align}
By assumption, there exist scalars $s, s^*, t, t^*$ in $\mathbb F$ with $s$, $s^*$ nonzero such that $sA+tI, s^*A^*+t^* I$ is isomorphic to $B, B^*$.
Define $A^\vee = sA+tI$ and $(A^*)^\vee = s^*A^*+t^* I$. The  profile $q^\vee$, $\alpha^\vee$, $\beta^\vee$, $\gamma^\vee$ of  $A^\vee, (A^*)^\vee$ 
is described in Lemma \ref{lem:affineAdj}.
The circular bidiagonal pairs $A^\vee, (A^*)^\vee$ and $B, B^*$ are isomorphic, so they have the same profile:
\begin{align*}
q^\vee = q', \qquad \quad \alpha^\vee = \alpha', \qquad \quad \beta^\vee = \beta', \qquad \quad \gamma^\vee = \gamma'.
\end{align*}
Evaluate the above equations using \eqref{eq:abc}, \eqref{eq:apbpcp} and the second table in Lemma \ref{lem:affineAdj}. This yields
\begin{align*}
q=q', \qquad \quad t = 0, \qquad \quad t^*=0, \qquad \quad  s s^* \varepsilon =\varepsilon'.
\end{align*}
By construction, the circular bidiagonal pairs  $sA, s^*A^*$ and $B, B^*$ are isomorphic. Therefore $sA$ has the same eigenvalues as $B$, and
 $s^*A^*$ has the same eigenvalues as $B^*$.
The scalar $1$ is an eigenvalue of $A$, so the scalar $s$ is an eigenvalue of $sA$. The eigenvalues of $B$ are $1,q,q^2,\ldots, q^d$.
By these comments $s \in \lbrace 1,q, q^2, \ldots, q^d\rbrace$.
The scalar $1$ is an eigenvalue of $A^*$, so the scalar $s^*$ is an eigenvalue of $s^*A^*$. The eigenvalues of $B^*$ are $1,q,q^2,\ldots, q^d$.
By these comments $s^* \in \lbrace 1,q, q^2, \ldots, q^d\rbrace$.  We may now argue
\begin{align*}
\varepsilon'  = s s^* \varepsilon \in \lbrace  \varepsilon, q \varepsilon, q^2 \varepsilon, \ldots, q^d \varepsilon \rbrace.
\end{align*}

\noindent Next, we reverse the logical direction. Assume that $q'=q$ and $\varepsilon'\in 
\lbrace \varepsilon, q \varepsilon, q^2 \varepsilon, \ldots, q^d \varepsilon\rbrace$. Then  ${\rm CBP}(\mathbb F; d, q, \varepsilon)$ and ${\rm CBP}(\mathbb F; d, q', \varepsilon')$ are affine equivalent by Corollary \ref{cor:n1}.
\\
\noindent (ii) Assume that
 ${\rm CBP}(\mathbb F; d, \gamma)$ and ${\rm CBP}(\mathbb F; d, \gamma')$ are affine equivalent. We will show that
 $\gamma' -\gamma \in \lbrace 0,1,2,\ldots,d \rbrace$.
Write $A, A^*$ for ${\rm CBP}(\mathbb F; d, \gamma)$ and
$B, B^*$ for  ${\rm CBP}(\mathbb F; d, \gamma')$.
The profile of $A,A^*$ is  $q, \alpha, \beta, \gamma$ where
\begin{align} \label{eq:abc1}
q=1, \qquad \quad \alpha = 1, \qquad \quad \beta=1.
\end{align}
The profile of $B, B^*$ is  $q', \alpha', \beta', \gamma'$ where
\begin{align} \label{eq:apbpcp1}
q'=1, \qquad \quad \alpha' = 1, \qquad \quad \beta'=1.
\end{align}
By assumption, there exist scalars $s, s^*, t, t^*$ in $\mathbb F$ with $s$, $s^*$ nonzero such that $sA+tI, s^*A^*+t^* I$ is isomorphic to $B, B^*$.
Define $A^\vee = sA+tI$ and $(A^*)^\vee = s^*A^*+t^* I$. The profile  $q^\vee$, $\alpha^\vee$, $\beta^\vee$, $\gamma^\vee$ of  $A^\vee, (A^*)^\vee$
is described in Lemma \ref{lem:affineAdj}.
The circular bidiagonal pairs $A^\vee, (A^*)^\vee$ and $B, B^*$ are isomorphic, so they have the same profile:
\begin{align*}
q^\vee = q', \qquad \quad \alpha^\vee = \alpha', \qquad \quad \beta^\vee = \beta', \qquad \quad \gamma^\vee = \gamma'.
\end{align*}
Evaluate the above equations using \eqref{eq:abc1}, \eqref{eq:apbpcp1} and the second table in Lemma \ref{lem:affineAdj}. This yields
\begin{align*}
 s=1, \qquad \quad s^*=1, \qquad \quad  \gamma'-\gamma=t-t^*.
\end{align*}
By construction, the circular bidiagonal pairs  $A+tI, A^*+t^*I$ and $B, B^*$ are isomorphic. Therefore $A+tI$ has the same eigenvalues as $B$, and
 $A^*+t^*I$ has the same eigenvalues as $B^*$.
The scalar $0$ is an eigenvalue of $A$, so the scalar $t$ is an eigenvalue of $A+tI$. The eigenvalues of $B$ are $0,1,2,\ldots, d$.
By these comments $t \in \lbrace 0,1,2,\ldots, d\rbrace$.
The scalar $0$ is an eigenvalue of $A^*$, so the scalar $t^*$ is an eigenvalue of $A^*+t^*I$. The eigenvalues of $B^*$ are $0,1,2,\ldots, d$.
By these comments $t^* \in \lbrace 0,1,2,\ldots, d\rbrace$.  We may now argue
\begin{align*}
\gamma'  - \gamma = t-t^* \in  \lbrace 0,1,2,\ldots, d\rbrace.
\end{align*}
\noindent Next, we reverse the logical direction. Assume that $\gamma' - \gamma \in \lbrace 0,1,2,\ldots, d\rbrace$.
Then  ${\rm CBP}(\mathbb F; d,  \gamma)$ and ${\rm CBP}(\mathbb F; d,  \gamma')$ are affine equivalent by Corollary \ref{cor:n2}.
\hfill $\Box$

\section{Isomorphism and duality}

\noindent For circular bidiagonal pairs,  the concepts of duality and isomorphism were explained in Definitions
\ref{def:dual} and \ref{def:cbpIso}, respectively. In this section, we use these concepts to interpret the proof of Lemmas \ref{ex:ddq}, \ref{ex:dd}.

\begin{proposition} \label{prop:dual1}
We refer  to the circular bidiagonal pair ${\rm CBP}(\mathbb F; d, q, \varepsilon)$ in Lemma \ref{ex:ddq}. 
The matrix  $P(q,\varepsilon)$ from  \eqref{eq:Pmatq} is an isomorphism of
circular bidiagonal pairs from $A(q^{-1}, \varepsilon), A^*(q^{-1})$ to $A^*(q), A(q, \varepsilon)$.
\end{proposition}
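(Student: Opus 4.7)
The plan is to verify directly that $P := P(q,\varepsilon)$ satisfies the two conditions of Definition \ref{def:cbpIso} as an isomorphism from the source pair $\bigl(A(q^{-1},\varepsilon),\,A^*(q^{-1})\bigr)$ to the target pair $\bigl(A^*(q),\,A(q,\varepsilon)\bigr)$. Concretely, writing $\sigma=P$, $A=A(q^{-1},\varepsilon)$, $A^*=A^*(q^{-1})$, $B=A^*(q)$, $B^*=A(q,\varepsilon)$, I must establish that $P$ is invertible and that
\begin{align*}
P\,A(q^{-1},\varepsilon) &= A^*(q)\,P, \\
P\,A^*(q^{-1}) &= A(q,\varepsilon)\,P.
\end{align*}

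Every one of these ingredients has already been produced inside the proof of Lemma \ref{ex:ddq}. The invertibility of $P$ is the content of \eqref{eq:PP}, which gave the explicit inverse in terms of $P(q^{-1},\varepsilon)$ via the $q$-Vandermonde summation. The first intertwining relation above is exactly \eqref{eq:AP2}. The second intertwining relation is exactly \eqref{eq:AP1}. So the proposition is obtained by simply quoting equations \eqref{eq:AP1}, \eqref{eq:AP2} and \eqref{eq:PP} from the previous section and invoking Definition \ref{def:cbpIso}.

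There is essentially no obstacle here: the real work (the matrix-multiplication verification of \eqref{eq:AP1}, \eqref{eq:AP2} and the basic-hypergeometric identity behind \eqref{eq:PP}) has already been carried out. The only thing worth highlighting in the write-up is the bookkeeping: one should point out that, under the duality of Definition \ref{def:dual}, the \emph{source} pair $\bigl(A(q^{-1},\varepsilon),A^*(q^{-1})\bigr)$ is nothing more than ${\rm CBP}(\mathbb F;d,q^{-1},\varepsilon)$, whereas the \emph{target} pair $\bigl(A^*(q),A(q,\varepsilon)\bigr)$ is the dual of ${\rm CBP}(\mathbb F;d,q,\varepsilon)$. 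Thus the proposition can be reread as saying that the $q\mapsto q^{-1}$ reparametrization of ${\rm CBP}(\mathbb F;d,q,\varepsilon)$ is isomorphic to its own dual, and this is how the calculations of Lemma \ref{ex:ddq} will be interpreted in the subsequent discussion of the section. The proof itself is then a one-line citation.
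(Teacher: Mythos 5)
Your proof is correct and is essentially identical to the paper's: both simply cite the invertibility of $P(q,\varepsilon)$ established in the proof of Lemma \ref{ex:ddq} together with the intertwining relations \eqref{eq:AP1}, \eqref{eq:AP2}, and then invoke Definition \ref{def:cbpIso}. Your added remark that the source pair is ${\rm CBP}(\mathbb F;d,q^{-1},\varepsilon)$ and the target is the dual of ${\rm CBP}(\mathbb F;d,q,\varepsilon)$ is exactly the content of the paper's Corollary \ref{cor:dual}.
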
 
\begin{proof} We showed in the proof of Lemma \ref{ex:ddq} that $P(q, \varepsilon)$ is invertible.
The result follows from this along with \eqref{eq:AP1}, \eqref{eq:AP2}
and
Definition \ref{def:cbpIso}.
\end{proof}

\begin{corollary}\label{cor:dual} The following are dual, up to isomorphism of circular bidiagonal pairs:
\begin{align*}
{\rm CBP}(\mathbb F; d, q, \varepsilon); \qquad \qquad {\rm CBP}(\mathbb F; d, q^{-1}; \varepsilon).
\end{align*}
\end{corollary}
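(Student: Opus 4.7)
The plan is to recognize the corollary as an immediate repackaging of Proposition \ref{prop:dual1} in the language of duality. First I would unpack Definition \ref{def:name1} to identify ${\rm CBP}(\mathbb F; d, q, \varepsilon)$ with the pair $A(q,\varepsilon), A^*(q)$ appearing in Lemma \ref{ex:ddq}; by Definition \ref{def:dual}, its dual is the ordered pair $A^*(q), A(q,\varepsilon)$.

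Next I would observe that the left-hand source of the isomorphism in Proposition \ref{prop:dual1}, namely $A(q^{-1}, \varepsilon), A^*(q^{-1})$, is exactly ${\rm CBP}(\mathbb F; d, q^{-1}, \varepsilon)$, provided the parameters $(q^{-1}, \varepsilon)$ satisfy the hypotheses of Lemma \ref{ex:ddq}. This requires that $q^{-1}$ be a primitive $n^{\rm th}$ root of unity, which is automatic since $q$ and $q^{-1}$ generate the same cyclic group, and that $\varepsilon \notin \{1, q^{-1}, q^{-2}, \ldots, q^{-d}\}$; but this latter set equals $\{1, q, q^2, \ldots, q^d\}$ (reindexing by $i \mapsto d+1-i$ modulo $n$), so the hypothesis of Lemma \ref{ex:ddq} for $(q, \varepsilon)$ transfers directly. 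Thus ${\rm CBP}(\mathbb F; d, q^{-1}, \varepsilon)$ is well-defined.

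Finally, Proposition \ref{prop:dual1} supplies an invertible matrix $P(q, \varepsilon)$ intertwining $A(q^{-1}, \varepsilon), A^*(q^{-1})$ with $A^*(q), A(q,\varepsilon)$. In view of Definition \ref{def:cbpIso}, this means ${\rm CBP}(\mathbb F; d, q^{-1}, \varepsilon)$ is isomorphic, as a circular bidiagonal pair, to the dual of ${\rm CBP}(\mathbb F; d, q, \varepsilon)$, which is exactly the claim. There is no substantial obstacle here; the only point requiring care is the bookkeeping verification that the parameters of ${\rm CBP}(\mathbb F; d, q^{-1}, \varepsilon)$ satisfy the admissibility conditions of Lemma \ref{ex:ddq}, as sketched above.
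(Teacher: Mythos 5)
Your proposal is correct and takes essentially the same route as the paper, which proves the corollary simply by citing Definition \ref{def:dual} and Proposition \ref{prop:dual1}. Your additional bookkeeping check that $(q^{-1},\varepsilon)$ satisfies the admissibility conditions of Lemma \ref{ex:ddq} is a sensible detail that the paper leaves implicit.
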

\begin{proof} By Definition \ref{def:dual} and Proposition \ref{prop:dual1}.
\end{proof} 

\begin{proposition} \label{prop:dual2}
We refer  to the circular bidiagonal pair ${\rm CBP}(\mathbb F; d, \gamma)$ in Lemma \ref{ex:dd}. The matrix  $P(\gamma)$ from \eqref{eq:Pmat}
 is an isomorphism of
circular bidiagonal pairs from $A(-\gamma), A^*$ to $A^*, A(\gamma)$.
\end{proposition}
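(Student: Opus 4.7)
The plan is to mirror the proof of Proposition \ref{prop:dual1} essentially verbatim, since all the ingredients are already in place from the proof of Lemma \ref{ex:dd}. The two things Definition \ref{def:cbpIso} requires are invertibility of the intertwining map, and a pair of intertwining relations in the correct form.

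First, I would note that $P(\gamma)$ was already shown to be invertible during the proof of Lemma \ref{ex:dd}: the identity \eqref{eq:PPi} exhibits an explicit inverse (up to a nonzero scalar $d!/(1-\gamma)_d$, which is nonzero since $\gamma$ is not among $0,1,\ldots,d$, so each factor $1-\gamma, 2-\gamma, \ldots, d-\gamma$ is nonzero in $\mathbb F$).

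Next, I would simply read off the two intertwining relations \eqref{eq:APP2} and \eqref{eq:AsP2} from Lemma \ref{ex:dd} and recast them in the form demanded by Definition \ref{def:cbpIso}. Writing the ``source'' pair as $(A(-\gamma), A^*)$ and the ``target'' pair as $(A^*, A(\gamma))$, the definition requires
\[
P(\gamma)\, A(-\gamma) = A^*\, P(\gamma), \qquad\qquad P(\gamma)\, A^* = A(\gamma)\, P(\gamma).
\]
The first of these is exactly \eqref{eq:AsP2}, and the second is exactly \eqref{eq:APP2}. Combined with invertibility of $P(\gamma)$, this establishes the claim via Definition \ref{def:cbpIso}.

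There is no real obstacle; the content of this proposition is a bookkeeping observation that the intertwining relations already proved in Lemma \ref{ex:dd} realize precisely the duality isomorphism when read in the appropriate direction. One might add a short corollary, parallel to Corollary \ref{cor:dual}, asserting that ${\rm CBP}(\mathbb F;d,\gamma)$ and ${\rm CBP}(\mathbb F;d,-\gamma)$ are dual up to isomorphism of circular bidiagonal pairs.
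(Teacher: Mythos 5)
Your proof is correct and matches the paper's own argument: the paper likewise cites the invertibility of $P(\gamma)$ established in the proof of Lemma \ref{ex:dd} and reads off the intertwining relations \eqref{eq:APP2}, \eqref{eq:AsP2} against Definition \ref{def:cbpIso}. The suggested corollary is exactly the paper's Corollary \ref{cor:dual2}.
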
 
\begin{proof} We showed in the proof of Lemma \ref{ex:dd} that $P(\gamma)$ is invertible.
The result follows from this along with  \eqref{eq:APP2}, \eqref{eq:AsP2} and  Definition \ref{def:cbpIso}.
\end{proof}

\begin{corollary} \label{cor:dual2}
The following are dual, up to isomorphism of circular bidiagonal pairs:
\begin{align*}
{\rm CBP}(\mathbb F; d, \gamma), \qquad \qquad {\rm CBP}(\mathbb F; d, -\gamma).
\end{align*}
\end{corollary}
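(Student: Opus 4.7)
The plan is to chain together Definition \ref{def:dual} (which says the dual of a pair $A, A^*$ is the pair $A^*, A$) with Proposition \ref{prop:dual2} (which produces an explicit intertwiner between $A(-\gamma), A^*$ and $A^*, A(\gamma)$). Since ${\rm CBP}(\mathbb F;d,\gamma)$ is by Definition \ref{def:name2} the pair $A(\gamma), A^*$, its dual is the pair $A^*, A(\gamma)$. I would first record this identification as the single observation the proof rests on.

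Next, I would invoke Proposition \ref{prop:dual2}, which asserts that the matrix $P(\gamma)$ from \eqref{eq:Pmat} is an isomorphism of circular bidiagonal pairs from $A(-\gamma), A^*$ to $A^*, A(\gamma)$. Combined with the previous step, this exhibits $A^*, A(\gamma)$ as isomorphic (as a circular bidiagonal pair) to $A(-\gamma), A^*$, which by Definition \ref{def:name2} is exactly ${\rm CBP}(\mathbb F; d,-\gamma)$. This is all that is meant by ``dual up to isomorphism of circular bidiagonal pairs.''

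There is essentially no obstacle; the entire content is bookkeeping about the symmetry $\gamma \leftrightarrow -\gamma$ in the intertwining relations \eqref{eq:APP2} and \eqref{eq:AsP2}. The only thing worth double-checking when writing the final text is that the roles of the two members of the pair are matched correctly: $P(\gamma)$ sends the first coordinate $A(-\gamma)$ to $A^*$ and the second coordinate $A^*$ to $A(\gamma)$, which is exactly what is needed to identify it with the dual of $A(\gamma), A^*$ rather than with $A(\gamma), A^*$ itself. Once that is confirmed, the proof is a one-line appeal to Definition \ref{def:dual} and Proposition \ref{prop:dual2}, parallel in form to the proof of Corollary \ref{cor:dual}.
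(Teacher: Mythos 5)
Your proposal is correct and matches the paper's proof exactly: the paper also deduces the corollary by combining Definition \ref{def:dual} (the dual of $A(\gamma),A^*$ is $A^*,A(\gamma)$) with Proposition \ref{prop:dual2} (the isomorphism $P(\gamma)$ from $A(-\gamma),A^*$ to $A^*,A(\gamma)$). Your extra remark about matching the order of the two coordinates is the right point to check, and it checks out.
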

\begin{proof} By Definition \ref{def:dual} and Proposition \ref{prop:dual2}.
\end{proof}

\section{The proof of Lemma \ref{lem:com}}
\noindent Our goal in this section is to prove Lemma \ref{lem:com}. Our proof strategy is to display the isomorphism involved. We will give a detailed description of this isomorphism.

\medskip

\noindent Recall the circular bidiagonal pair ${\rm CBP}(\mathbb F; d,q,\varepsilon)$ from Lemma \ref{ex:ddq}. 

\begin{definition} \label{def:raiseq} \rm
Referring to  ${\rm CBP}(\mathbb F; d,q,\varepsilon)$,
define a matrix $R=R(q,\varepsilon)$ in ${\rm Mat}_{d+1}(\mathbb F)$ with entries 
$R_{0,d}=1-\varepsilon$ and $R_{i,i-1} = q^i-\varepsilon$ for $1 \leq i \leq d$. All other entries of  $R$ are zero.
We call $R$ the {\it raising matrix} for ${\rm CBP}(\mathbb F; d,q,\varepsilon)$.
\end{definition}

\begin{example}\rm Referring to Definition \ref{def:raiseq},  assume that $d=4$. Then
\begin{align*}
R= \begin{pmatrix} 0 &0&0&0&1-\varepsilon \\ q-  \varepsilon&0&0&0&0 \\ 0&q^2- \varepsilon& 0&0&0 \\ 0&0&q^3- \varepsilon&
0&0 \\ 0&0&0&q^4-\varepsilon& 0
\end{pmatrix}.
\end{align*}
\end{example}

\begin{lemma} \label{lem:raise1q} With reference to Definition \ref{def:raiseq},
 the following {\rm (i), (ii)} hold:
\begin{enumerate} 
\item[\rm (i)] $R^{d+1} = (-1)^d (\varepsilon;q)_{d+1} I$;
\item[\rm (ii)] $R$ is invertible.
\end{enumerate}
\end{lemma}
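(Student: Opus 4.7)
The plan is to exploit that $R$ is a monomial matrix --- reading off Definition \ref{def:raiseq}, each column and each row contains exactly one nonzero entry, and $R$ acts on the standard basis cyclically: $R e_j = (q^{j+1} - \varepsilon) e_{j+1}$ for $0 \le j \le d-1$ and $R e_d = (1-\varepsilon) e_0$. Part (ii) is then immediate, because the hypothesis of Lemma \ref{ex:ddq} forces each of the scalars $1-\varepsilon, q-\varepsilon, q^2-\varepsilon, \ldots, q^d-\varepsilon$ to be nonzero, so $R$ is the product of a permutation matrix with an invertible diagonal matrix and is therefore invertible.

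For part (i), I would iterate the cyclic action: for any $j$, the vector $R^{d+1} e_j$ is obtained by traversing the full cycle once and returns to a scalar multiple of $e_j$, with coefficient
\begin{align*}
c \;=\; \prod_{i=0}^{d} (q^i - \varepsilon),
\end{align*}
the same cyclic product independent of the starting index. Hence $R^{d+1} = c\, I$. The only remaining work is to identify $c$ with $(-1)^d (\varepsilon;q)_{d+1}$.

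The main (and only nontrivial) step is that rewriting. Factoring, $q^i - \varepsilon = q^i(1 - \varepsilon q^{-i})$, so
\begin{align*}
c \;=\; q^{d(d+1)/2} \prod_{i=0}^{d} (1 - \varepsilon q^{-i}).
\end{align*}
Because $q$ is a primitive $n$-th root of unity with $n = d+1$, the multiset $\{q^{-i}\}_{i=0}^d$ coincides with $\{q^i\}_{i=0}^d$, so the product on the right equals $(\varepsilon;q)_{d+1}$. It remains to verify $q^{d(d+1)/2} = (-1)^d$. If $n$ is odd then $d$ is even and $d(d+1)/2 = (n-1)n/2$ is a multiple of $n$, giving $q^{d(d+1)/2} = 1 = (-1)^d$; if $n$ is even then $d$ is odd and the primitivity of $q$ forces $q^{n/2} = -1$, so $q^{d(d+1)/2} = (q^{n/2})^d = (-1)^d$. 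Combining these observations yields $R^{d+1} = (-1)^d(\varepsilon;q)_{d+1} I$, proving (i). The $q^{d(d+1)/2} = (-1)^d$ identity is the one step that requires a small case analysis; everything else follows mechanically from the monomial structure of $R$.
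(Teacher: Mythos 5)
Your proof is correct and fills in the computation that the paper dismisses with ``by matrix multiplication'': the cyclic monomial action of $R$ gives $R^{d+1}=\prod_{i=0}^{d}(q^i-\varepsilon)\,I$, and your rewriting of that product (including the check $q^{d(d+1)/2}=(-1)^d$) is sound. The only cosmetic difference is that the paper deduces (ii) from (i) via $(\varepsilon;q)_{d+1}\neq 0$, whereas you prove (ii) directly from the nonvanishing of the individual entries; both are valid.
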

\begin{proof} (i) By matrix multiplication. \\
 \noindent (ii) By (i) and since $(\varepsilon;q)_{d+1}\not=0$. \\
\end{proof}

\noindent Next, we explain how $R$ is related to the matrices $A=A(q,\varepsilon)$ and $A^*=A^*(q)$ from Lemma \ref{ex:ddq}.
\begin{lemma} \label{lem:raise2q} With the above notation, we have
\begin{enumerate}
\item[\rm (i)] $A^*A-\varepsilon I = R = q (AA^*-\varepsilon I)$;
\item[\rm (ii)]  $q A R= R A$ and $q^{-1} A^* R = R A^*$.
\end{enumerate}
 \end{lemma}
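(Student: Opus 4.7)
The plan is to derive both parts from a combination of the commutation relation in \eqref{eq:AAsRel} and a short matrix calculation.

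For part (i), I would first observe that the equality $A^*A-\varepsilon I = q(AA^*-\varepsilon I)$ is just a rewriting of the third relation in \eqref{eq:AAsRel}: multiplying out the right side gives $qAA^*-q\varepsilon I$, and $qAA^*-A^*A = \varepsilon(q-1)I$ rearranges to exactly $A^*A-\varepsilon I = qAA^*-q\varepsilon I$. So the second equality is free. For the first equality $A^*A-\varepsilon I = R$, I would verify it by direct matrix multiplication. Since $A^*$ is diagonal with $A^*_{i,i}=q^i$, we have $(A^*A)_{i,j}=q^i A_{i,j}$, which picks up the three nonzero patterns of $A$: the diagonal gives $(A^*A)_{i,i}=q^i \cdot q^{-i}\varepsilon = \varepsilon$, the corner gives $(A^*A)_{0,d}=1-\varepsilon$, and the subdiagonal gives $(A^*A)_{i,i-1}=q^i(1-q^{-i}\varepsilon)=q^i-\varepsilon$. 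Subtracting $\varepsilon I$ clears the diagonal, leaving precisely the entries of $R$ from Definition \ref{def:raiseq}.

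For part (ii), rather than do another matrix computation, I would leverage (i) and the commutation relation. Using $R = A^*A-\varepsilon I$ and $qAA^* = A^*A+\varepsilon(q-1)I$, compute
\begin{align*}
qAR &= qA(A^*A-\varepsilon I) = (qAA^*)A - q\varepsilon A \\
    &= \bigl(A^*A + \varepsilon(q-1)I\bigr)A - q\varepsilon A \\
    &= A^*A^2 - \varepsilon A = (A^*A-\varepsilon I)A = RA.
\end{align*}
The second identity is analogous: using $R = q(AA^*-\varepsilon I)$, write $q^{-1}A^*R = A^*(AA^*-\varepsilon I)$ and apply $A^*A = qAA^*-\varepsilon(q-1)I$ to push $A^*$ past $A$, collecting constants to recover $(AA^*-\varepsilon I)A^* \cdot q^{\text{stuff}}$ and match $RA^*$.

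I do not expect a real obstacle here: part (i) is a one-line rearrangement plus a trivial matrix check, and part (ii) is a short algebraic manipulation that only uses \eqref{eq:AAsRel} together with (i). The only care needed is keeping track of the factor of $q$ when converting between the $A^*A-\varepsilon I$ and $q(AA^*-\varepsilon I)$ forms of $R$, which is exactly what (i) provides.
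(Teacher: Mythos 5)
Your proposal is correct and follows essentially the same route as the paper: part (i) is a direct entrywise check together with the already-established relation $qAA^*-A^*A=\varepsilon(q-1)I$, and part (ii) exploits the two expressions for $R$ from (i). The only cosmetic difference is that the paper proves (ii) by substituting $R=A^*A-\varepsilon I$ in one occurrence and $R=q(AA^*-\varepsilon I)$ in the other, so the terms cancel identically without re-invoking the commutation relation, whereas you re-apply the relation explicitly; both computations are valid.
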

 \begin{proof} (i) By matrix multiplication, using the data in Lemma \ref{ex:ddq} and Definition \ref{def:raiseq}. \\
 \noindent (ii) Observe that
 \begin{align*}
 &qAR-RA = qA(A^*A-\varepsilon I) - q(A A^*-\varepsilon I ) A = 0; \\
 & q^{-1} A^* R - R A^* = q^{-1} A^* q(AA^*-\varepsilon I) - (A^* A - \varepsilon I) A^* = 0.
 \end{align*}
\end{proof}

\noindent Next, we explain how $R$ is related to the primitive idempotents of $A$ and $A^*$. 
For $0 \leq i \leq d$, let $E_i$ (resp. $E^*_i$) denote the primitive idempotent of $A$ (resp. $A^*$) for the
eigenvalue $ q^{-i}$ (resp. $q^i$). 

\begin{lemma} \label{lem:raise3q} With the above notation, we have
\begin{enumerate}
\item[\rm (i)] $RE_i=E_{i+1} R$ and $RE^*_i=E^*_{i+1}R$ for $0 \leq i \leq d$;
\item[\rm (ii)] $RE_i V= E_{i+1} V$ and $R E^*_iV = E^*_{i+1}V$ for $0 \leq i \leq d$.
\end{enumerate}
 \end{lemma}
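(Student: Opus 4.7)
The plan is to derive part (i) from the commutation relations in Lemma \ref{lem:raise2q}(ii) together with the polynomial formula \eqref{eq:defEi} for the primitive idempotents, and then deduce part (ii) from part (i) via the invertibility of $R$ established in Lemma \ref{lem:raise1q}(ii).

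First I would promote the linear relations $qAR = RA$ and $q^{-1}A^*R = RA^*$ from Lemma \ref{lem:raise2q}(ii) to polynomial relations. A routine induction gives $RA^k = q^k A^k R = (qA)^k R$, and therefore $Rp(A) = p(qA)R$ for every $p \in \mathbb F[\lambda]$. Similarly, the second relation yields $RA^{*k} = q^{-k}A^{*k}R$, hence $Rp(A^*) = p(q^{-1}A^*)R$ for every $p \in \mathbb F[\lambda]$.

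Next I would apply these to the Lagrange interpolation formula \eqref{eq:defEi} for $E_i$, using that $A$ has eigenvalues $\theta_j = q^{-j}$ for $0 \leq j \leq d$. This gives
\begin{align*}
RE_i \;=\; \prod_{j\neq i}\frac{qA - q^{-j}I}{q^{-i}-q^{-j}}\,R \;=\; \prod_{j\neq i}\frac{q\bigl(A - q^{-j-1}I\bigr)}{q^{-i}-q^{-j}}\,R.
\end{align*}
Since $q$ is a primitive $n^{\rm th}$ root of unity with $n = d+1$, the substitution $k = j+1 \pmod n$ is a bijection from $\{0,\ldots,d\}\setminus\{i\}$ onto $\{0,\ldots,d\}\setminus\{i+1\}$, and under this substitution $q^{-j-1}$ becomes $q^{-k}$ and $q^{-i}-q^{-j} = q^{-1}(q^{-(i+1)}-q^{-k}) \cdot q$ gives matching factors of $q$ in numerator and denominator that cancel. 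What remains is exactly the Lagrange formula for $E_{i+1}$, so $RE_i = E_{i+1}R$. The argument for $RE^*_i = E^*_{i+1}R$ is identical with $A^*$, eigenvalues $q^i$, and the relation $Rp(A^*) = p(q^{-1}A^*)R$ in place.

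Finally, for part (ii), apply part (i) to any vector in $V$: since $R$ is invertible (Lemma \ref{lem:raise1q}(ii)), we have $RV = V$, hence $RE_i V = E_{i+1}RV = E_{i+1}V$, and analogously for $E^*_i$. The only place that requires care is the reindexing step, where one must check that the modular arithmetic $j \mapsto j+1 \pmod n$ behaves correctly both for the index set and for the eigenvalues via $q^{-n} = 1$; this is the main (and quite mild) obstacle in the argument.
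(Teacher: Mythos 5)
Your proof is correct and follows essentially the same route as the paper: conjugate the Lagrange formula \eqref{eq:defEi} by $R$ using $RA=qAR$ (resp.\ $RA^*=q^{-1}A^*R$), reindex the product modulo $n$ using $q^n=1$, and deduce (ii) from (i) via the invertibility of $R$. (Minor slip: the displayed identity $q^{-i}-q^{-j}=q^{-1}(q^{-(i+1)}-q^{-k})\cdot q$ should read $q^{-i}-q^{-j}=q\,(q^{-(i+1)}-q^{-k})$, but this does not affect the cancellation of the factors of $q$ that you describe.)
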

 \begin{proof} (i) To obtain  $RE_i=E_{i+1} R$, 
 multiply each side of \eqref{eq:defEi} on the left by $R$ and on the right by $R^{-1}$. Evaluate
 the result using $\theta_r = q^{-r}$ $(0 \leq r \leq d)$ along with $qA=RAR^{-1}$. The equation $RE^*_i=E^*_{i+1}R$ is similar obtained.
 \\
 \noindent (ii) By (i) above and Lemma \ref{lem:raise1q}(ii).
 \end{proof}

 \begin{proposition} \label{prop:raiseq}
With the above notation, $R$  is an isomorphism of circular bidiagonal pairs from $A, A^*$ to  $qA, q^{-1}A^*$.
\end{proposition}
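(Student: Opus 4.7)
The plan is to verify directly that $R$ satisfies the two conditions of Definition \ref{def:cbpIso} for being an isomorphism of circular bidiagonal pairs from $A, A^*$ to $qA, q^{-1}A^*$. This requires essentially no new work: all the needed ingredients have already been proven in Lemmas \ref{lem:raise1q} and \ref{lem:raise2q}.

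First I would note that $qA, q^{-1}A^*$ is indeed a circular bidiagonal pair on $V = \mathbb F^{d+1}$, by Lemma \ref{lem:adjust} applied to the affine transformation with $s=q$, $t=0$, $s^*=q^{-1}$, $t^*=0$. So the statement of the proposition makes sense. Next, $R$ is invertible by Lemma \ref{lem:raise1q}(ii), so $R$ is a vector space isomorphism $V \to V$. Finally, the intertwining relations required by Definition \ref{def:cbpIso} are
\begin{align*}
R A = (qA) R, \qquad\qquad R A^* = (q^{-1} A^*) R,
\end{align*}
and both are immediate rewrites of the two identities in Lemma \ref{lem:raise2q}(ii), namely $qAR = RA$ and $q^{-1}A^*R = RA^*$.

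I do not anticipate any real obstacle here; the proposition is essentially a repackaging of Lemma \ref{lem:raise2q}(ii) in the language of Definition \ref{def:cbpIso}, with the invertibility supplied by Lemma \ref{lem:raise1q}(ii). The only point worth mentioning in the write-up is that $qA, q^{-1}A^*$ genuinely qualifies as a circular bidiagonal pair (via Lemma \ref{lem:adjust}), so that the conclusion is a statement about two such pairs rather than just two linear maps.
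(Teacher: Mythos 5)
Your proof is correct and follows the same route as the paper, which simply cites Definition \ref{def:cbpIso} together with Lemma \ref{lem:raise2q}(ii); your additional remarks about invertibility (Lemma \ref{lem:raise1q}(ii)) and about $qA, q^{-1}A^*$ being a circular bidiagonal pair (Lemma \ref{lem:adjust}) are implicit in the paper's one-line proof but do no harm to make explicit.
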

 \begin{proof} By   Definition \ref{def:cbpIso} and Lemma \ref{lem:raise2q}(ii).
 \end{proof}
 

\noindent We turn our attention to the circular bidiagonal pair ${\rm CBP}(\mathbb F;d,\gamma)$ from Lemma \ref{ex:dd}.

\begin{definition} \label{def:raise} \rm Referring to ${\rm CBP}(\mathbb F;d,\gamma)$,
define a matrix $R=R(\gamma)$ in ${\rm Mat}_{d+1}(\mathbb F)$ with entries 
$R_{0,d}=\gamma$ and $R_{i,i-1} = i+\gamma$ for $1 \leq i \leq d$. All other entries of $R$ are zero.
We call $R$ the {\it raising matrix} for ${\rm CBP}(\mathbb F;d,\gamma)$.
\end{definition}

\begin{example}\rm Referring to Definition \ref{def:raise},  assume that $d=4$. Then
\begin{align*}
R= \begin{pmatrix} 0 &0&0&0&\gamma \\ 1+  \gamma&0&0&0&0 \\ 0&2+ \gamma& 0&0&0 \\ 0&0&3+\gamma&
0&0 \\ 0&0&0&4+\gamma& 0
\end{pmatrix}.
\end{align*}
\end{example}

\begin{lemma} \label{lem:raise1} With reference to Definition \ref{def:raise}, the following {\rm (i), (ii)} hold:
\begin{enumerate} 
\item[\rm (i)] $R^{d+1} = (\gamma)_{d+1} I$;
\item[\rm (ii)] $R$ is invertible.
\end{enumerate}
 \end{lemma}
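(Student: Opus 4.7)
My plan is to proceed in direct parallel with the proof of Lemma \ref{lem:raise1q}. For part (i), I will argue that $R$ acts on the standard basis $\lbrace e_0,e_1,\ldots,e_d\rbrace$ of $\mathbb F^{d+1}$ as a weighted cyclic shift: reading off the columns of $R$ from Definition \ref{def:raise} gives
$R e_{i-1} = (i+\gamma)\,e_i$ for $1\le i\le d$, and $R e_d = \gamma\, e_0$.
Iterating $d+1$ times brings each basis vector $e_j$ back to itself, multiplied by the full cyclic product $\gamma(1+\gamma)(2+\gamma)\cdots(d+\gamma)$, which by the notation recalled in Section~3 equals $(\gamma)_{d+1}$. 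Since this scalar is independent of $j$, we conclude $R^{d+1} = (\gamma)_{d+1}\,I$. (Equivalently, one can verify this by a direct matrix multiplication, observing that $R^{d+1}$ is necessarily diagonal because $R$ is a weighted cyclic permutation matrix.)

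For part (ii), the identity in (i) shows that $R$ is invertible as soon as $(\gamma)_{d+1}\neq 0$ in $\mathbb F$, with inverse $R^{-1} = (\gamma)_{d+1}^{-1} R^d$. So it remains to verify the nonvanishing of $(\gamma)_{d+1} = \prod_{k=0}^{d}(\gamma+k)$, i.e.\ that $\gamma \neq -k$ in $\mathbb F$ for $0\le k\le d$. Here I will invoke the hypothesis of Lemma \ref{ex:dd}, namely that $\mathrm{Char}(\mathbb F)=n=d+1$ and $\gamma \notin \lbrace 0,1,2,\ldots,d\rbrace$: since $-k \equiv n-k \pmod{n}$, the set $\lbrace -k \bmod n : 0\le k\le d\rbrace$ coincides with $\lbrace 0,1,\ldots,d\rbrace$, so the assumption on $\gamma$ is exactly what is needed to rule out each factor being zero.

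I do not expect any real obstacle here; the computation is mechanical and the nonvanishing argument is a one-line congruence observation, which is why the analogous Lemma \ref{lem:raise1q} merits only a two-line proof. If anything requires care, it is just bookkeeping the cyclic index in the ``wrap-around'' step $R e_d = \gamma e_0$ so that the full product $(\gamma)_{d+1}$ (and not a truncation of it) appears as the eigenvalue of $R^{d+1}$.
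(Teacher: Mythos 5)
Your proof is correct and takes essentially the same route as the paper, which simply cites ``matrix multiplication'' for (i) and the nonvanishing of $(\gamma)_{d+1}$ for (ii). You merely fill in the details the paper leaves implicit, including the correct observation that ${\rm Char}(\mathbb F)=d+1$ and $\gamma\notin\lbrace 0,1,\ldots,d\rbrace$ together force $(\gamma)_{d+1}=\prod_{k=0}^{d}(\gamma+k)\neq 0$.
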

 \begin{proof} (i) By matrix multiplication. \\
 \noindent (ii) By (i) and since $(\gamma)_{d+1} \not=0$.
 \end{proof}

\noindent Next, we describe how $R$ is related to the matrices $A=A(\gamma)$ and $A^*$ from Lemma \ref{ex:dd}.
\begin{lemma} \label{lem:raise2} With the above notation,
\begin{enumerate}
\item[\rm (i)] $A^*-A+\gamma I = R = A A^*-A^* A$;
\item[\rm (ii)]  $(A-I)R= R A$ and $(A^*-I) R = R A^*$.
\end{enumerate}
\end{lemma}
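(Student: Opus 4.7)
The plan is to verify (i) by direct entrywise computation and then derive (ii) as a formal consequence of (i), so the only calculational work is in part (i).

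For the first equality $A^*-A+\gamma I = R$ in (i), I would compute the three possible nonzero entry types of $A^*-A+\gamma I$ and compare with Definition \ref{def:raise}. The diagonal entries give $i - (i+\gamma)+\gamma=0$; the $(0,d)$-entry gives $0-(-\gamma)+0=\gamma$, matching $R_{0,d}$; the $(i,i-1)$-entries give $0-(-i-\gamma)+0=i+\gamma$, matching $R_{i,i-1}$. All other entries are zero on both sides, so this is immediate.

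For the second equality $R=AA^*-A^*A$, the key observation is that since $A^*=\mathrm{diag}(0,1,\ldots,d)$, for every $(i,j)$ we have $(AA^*-A^*A)_{i,j}=(j-i)A_{i,j}$. Applied to the nonzero entries of $A$: on the diagonal $(j-i)=0$; on the subdiagonal $(i-1-i)A_{i,i-1}=-(-i-\gamma)=i+\gamma=R_{i,i-1}$; and in the top-right corner $(d-0)A_{0,d}=d(-\gamma)=-d\gamma$. Here the essential step is to use the assumption $\mathrm{Char}(\mathbb F)=n=d+1$ from Lemma \ref{ex:dd}, so $d\equiv -1$ modulo $n$, giving $-d\gamma=\gamma=R_{0,d}$. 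This is the only step where the characteristic assumption enters, and it is the point most prone to a sign slip, so I would highlight it carefully.

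Part (ii) I would derive purely algebraically from (i), with no further matrix multiplication. Substitute $R=A^*-A+\gamma I$ into the commutator $AR-RA$:
\begin{align*}
AR-RA &= A(A^*-A+\gamma I)-(A^*-A+\gamma I)A \\
 &= AA^*-A^*A = R,
\end{align*}
using that $A$ commutes with $A^2$, $\gamma I$, and itself, and then applying the second identity of (i). Rearranging $AR-RA=R$ gives $(A-I)R=RA$. The same manipulation with $A$ replaced by $A^*$ yields $A^*R-RA^* = AA^*-A^*A = R$, hence $(A^*-I)R=RA^*$.

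The overall proof is short and essentially mechanical; I expect no serious obstacle. The one spot requiring care is the top-right entry check in part (i), since it depends on the prime characteristic hypothesis via $d\equiv -1 \pmod{n}$; the rest of the argument is either a direct comparison of entries or a two-line algebraic rearrangement.
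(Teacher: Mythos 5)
Your proposal is correct and follows essentially the same route as the paper: part (i) by direct entrywise verification (the paper simply says ``by matrix multiplication''), and part (ii) by the commutator identities $[A,R]=[A,A^*]=R$ and $[A^*,R]=[A,A^*]=R$ derived from (i). Your explicit note that the top-right entry check $-d\gamma=\gamma$ is exactly where the hypothesis ${\rm Char}(\mathbb F)=d+1$ enters is accurate and a worthwhile observation.
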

\begin{proof} (i) By matrix multiplication, using the data in Lemma \ref{ex:dd} and Definition \ref{def:raise}. \\
 \noindent (ii) We have
 \begin{align*}
 &\lbrack A, R\rbrack = \lbrack A, A^*-A+\gamma I\rbrack = \lbrack A, A^* \rbrack = R, \\
 &\lbrack A^*, R\rbrack = \lbrack A^*, A^*-A+\gamma I\rbrack =-\lbrack A^*, A \rbrack =  \lbrack A, A^* \rbrack = R.
 \end{align*}
\end{proof}

\noindent Next, we describe how $R$ is related to the primitive idempotents of $A$ and $A^*$.
For $0 \leq i \leq d$, let $E_i$ (resp. $E^*_i$) denote the primitive idempotent of $A$ (resp. $A^*$) for the
eigenvalue $i$.
\begin{lemma} \label{lem:raise3} With the above notation,
\begin{enumerate}
\item[\rm (i)] $RE_i=E_{i+1} R$ and $RE^*_i=E^*_{i+1}R$ for $0 \leq i \leq d$;
\item[\rm (ii)] $RE_i V= E_{i+1} V$ and $R E^*_iV = E^*_{i+1}V$ for $0 \leq i \leq d$.
\end{enumerate}
 \end{lemma}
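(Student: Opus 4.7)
The plan is to mirror the proof of Lemma \ref{lem:raise3q} in the additive, characteristic-$n$ setting. The two ingredients are the spectral projector formula \eqref{eq:defEi} and Lemma \ref{lem:raise2}(ii), which, combined with Lemma \ref{lem:raise1}(ii) (invertibility of $R$), rearranges to $RAR^{-1}=A-I$ and $RA^*R^{-1}=A^*-I$. So conjugation by $R$ shifts each of $A$ and $A^*$ downward by $I$.

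For part (i), I conjugate the formula $E_i=\prod_{j\ne i}(A-\theta_j I)/(\theta_i-\theta_j)$ by $R$, turning each factor $A-\theta_j I$ into $A-(\theta_j+1)I$. Since $\theta_r=r$ in $\mathbb F$ and ${\rm Char}(\mathbb F)=n=d+1$, we have $\theta_j+1=\theta_{j+1}$ with indices taken modulo $n$. Reindexing $k=j+1$ converts the numerator into the product defining $E_{i+1}$, and the same shift turns the denominator $\theta_i-\theta_{k-1}$ into $\theta_{i+1}-\theta_k$. Consequently $RE_iR^{-1}=E_{i+1}$, which rearranges to $RE_i=E_{i+1}R$. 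The identity $RE^*_i=E^*_{i+1}R$ is obtained by the verbatim computation using $RA^*R^{-1}=A^*-I$ and $\theta^*_r=r$.

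For part (ii), I apply the identities from (i) to the space $V$. Invertibility of $R$ yields $RV=V$, so $RE_iV=E_{i+1}RV=E_{i+1}V$, and similarly $RE^*_iV=E^*_{i+1}V$.

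I do not anticipate any serious obstacle; the only delicate point is verifying the index shift modulo $n$, which is underwritten by the Fermat-type identity $\lambda^n-\lambda=\lambda(\lambda-1)\cdots(\lambda-d)$ in $\mathbb F[\lambda]$ recalled in Section~3, guaranteeing $\theta_d+1=0=\theta_0$ in $\mathbb F$ so that the cyclic reindexing is legal.
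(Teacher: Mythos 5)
Your proof is correct and follows essentially the same route as the paper: conjugate the projector formula \eqref{eq:defEi} by $R$, use $RAR^{-1}=A-I$ (resp. $RA^*R^{-1}=A^*-I$) from Lemma \ref{lem:raise2}(ii) together with $\theta_r=r$ and the mod-$n$ reindexing, and then deduce (ii) from (i) via the invertibility of $R$ from Lemma \ref{lem:raise1}(ii). No gaps.
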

 \begin{proof} (i) To obtain  $RE_i=E_{i+1} R$, multiply each side of \eqref{eq:defEi} on the left by $R$ and on the right by $R^{-1}$. Evaluate
 the result using $\theta_r = r$ $(0 \leq r \leq d)$ along with $A-I=RAR^{-1}$. The equation $RE^*_i=E^*_{i+1}R$ is similar obtained.
 \\
 \noindent (ii) By (i) and Lemma \ref{lem:raise1}(ii).
 \end{proof}

 \begin{proposition} \label{prop:raise2}
With the above notation, $R$ is an isomorphism of circular bidiagonal pairs from $A, A^*$ to  $A-I, A^*-I$.
 \end{proposition}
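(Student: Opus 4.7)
The plan is short, because the scaffolding lemmas have been set up precisely so that Proposition \ref{prop:raise2} reduces to an invocation of Definition \ref{def:cbpIso}. Concretely, to exhibit $R$ as an isomorphism of circular bidiagonal pairs from $A, A^*$ to $A-I, A^*-I$, I need exactly two ingredients: that $R$ is invertible as a linear map on $V = \mathbb F^{d+1}$, and that $R$ intertwines the two pairs in the sense that $RA = (A-I)R$ and $RA^* = (A^*-I)R$.

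First, I would cite Lemma \ref{lem:raise1}(ii) to get that $R$ is invertible (this rests on $(\gamma)_{d+1} \neq 0$, which in turn uses that $\gamma$ is not among $0,1,2,\ldots,d$ in ${\rm Char}(\mathbb F) = n$). Second, I would cite Lemma \ref{lem:raise2}(ii), which gives the two intertwining relations in exactly the form demanded by Definition \ref{def:cbpIso}. Together with the fact that $A-I, A^*-I$ is itself a circular bidiagonal pair on $V$ (this is the $s=s^*=1$, $t=t^*=-1$ case of Lemma \ref{lem:adjust}), these two observations are all that is needed.

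This argument is the exact parallel of the proof of Proposition \ref{prop:raiseq}, which also dispatches the analogous claim by quoting Definition \ref{def:cbpIso} together with the intertwining relations in Lemma \ref{lem:raise2q}(ii). There is no genuine obstacle: the content of the proposition has already been absorbed into the supporting lemmas, and the only thing left to do is to note that the conditions of Definition \ref{def:cbpIso} are met. If anything at all required care, it would be confirming that the target pair is genuinely a circular bidiagonal pair and not merely a pair of linear maps, but this is immediate from Lemma \ref{lem:adjust}.
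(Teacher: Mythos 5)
Your proposal is correct and takes essentially the same route as the paper, which proves the proposition by citing Definition \ref{def:cbpIso} together with Lemma \ref{lem:raise2}(ii). Your additional explicit citations of Lemma \ref{lem:raise1}(ii) for invertibility and Lemma \ref{lem:adjust} for the target pair are steps the paper leaves implicit, but they are the right ones.
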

 \begin{proof} By   Definition \ref{def:cbpIso} and Lemma \ref{lem:raise2}(ii).
 \end{proof}
 
 \noindent Lemma \ref{lem:com} is immediate from Propositions \ref{prop:raiseq} and \ref{prop:raise2}.


\section{Circular Hessenberg pairs}

In \cite{jhl} Jae-ho Lee introduced the concept of a circular Hessenberg pair.
A circular bidiagonal pair is a special case of a circular Hessenberg pair. 
In Lemmas \ref{ex:ddq} and \ref{ex:dd}, we gave some examples of a circular bidiagonal pair. In the present section, we describe these examples
using the notation of \cite{jhl}. 
\medskip

\noindent 
In \cite{jhl} it is assumed that $d\geq 3$; we make the same assumption throughout this section.

\begin{example} \label{ex:chp1} \rm Recall ${\rm CBP}(\mathbb F; d, q, \varepsilon)$ from Lemma \ref{ex:ddq}. This corresponds to \cite[Example~5.1]{jhl} with parameters
\begin{align*}
a=0, \quad b=0, \quad c=1, \quad a^*=0, \quad b^*=1, \quad c^*=0, \quad y = 1-\varepsilon, \quad z=0.
\end{align*}
Here are some related parameters. Referring to \cite[Example~5.1]{jhl}, 
\begin{align*}
&\theta_i = q^{-i}, \qquad \theta^*_i = q^i  \qquad (0 \leq i \leq d); \\
&\phi_i = (q^i-1)(q^i - \varepsilon), \qquad \vartheta_i = (q^i-1)(1-\varepsilon) \qquad (1 \leq i \leq d).
\end{align*}
\noindent Referring to \cite[Proposition~6.12]{jhl},
\begin{align*}
&b_i = 0 \quad (0 \leq i \leq d-1); \qquad
a_i = q^{-i} \varepsilon \quad (0 \leq i \leq d); \\
&c_i = 1-q^{-i} \varepsilon \quad (1 \leq i \leq d); \qquad
\xi = 1-\varepsilon.
\end{align*}
\noindent Referring to \cite[Proposition~6.11]{jhl},
\begin{align*}
&b^*_i = 0 \quad (0 \leq i \leq d-1); \qquad
a^*_i = q^{i} \varepsilon \quad (0 \leq i \leq d); \\
&c^*_i = 1-q^{i} \varepsilon \quad (1 \leq i \leq d); \qquad
\xi^*= 1-\varepsilon.
\end{align*}
\end{example}

\begin{example}\label{ex:chp2} \rm Recall  ${\rm CBP}(\mathbb F; d, \gamma)$ from Lemma \ref{ex:dd}. This corresponds to \cite[Example~5.2]{jhl} with parameters
\begin{align*}
a=0, \quad b=1, \quad c=0, \quad a^*=0, \quad b^*=1, \quad c^*=0, \quad y = -\gamma, \quad z=0.
\end{align*}
Here are some related parameters. Referring to \cite[Example~5.2]{jhl}, 
\begin{align*}
&\theta_i = i, \qquad \theta^*_i = i  \qquad (0 \leq i \leq d); \\
&\phi_i = -i (i+\gamma), \qquad \vartheta_i = -i\gamma \qquad (1 \leq i \leq d).
\end{align*}
\noindent Referring to \cite[Proposition~6.12]{jhl},
\begin{align*}
&b_i = 0 \quad (0 \leq i \leq d-1); \qquad
a_i = i+\gamma \quad (0 \leq i \leq d); \\
&c_i = -i-\gamma \quad (1 \leq i \leq d); \qquad
\xi = -\gamma.
\end{align*}
\noindent Referring to \cite[Proposition~6.11]{jhl},
\begin{align*}
&b^*_i = 0 \quad (0 \leq i \leq d-1); \qquad
a^*_i = i-\gamma \quad (0 \leq i \leq d); \\
&c^*_i = \gamma-i \quad (1 \leq i \leq d); \qquad
\xi^*= \gamma.
\end{align*}
\end{example}





\section{Acknowledgement} The first author thanks Jae-ho Lee for many conversations about
circular bidiagonal pairs and circular Hessenberg pairs. The authors thank
\v{S}tefko Miklavi\v{c} for giving this paper a close reading and offering valuable comments.


\bigskip

\noindent Paul Terwilliger \hfil\break
\noindent Department of Mathematics \hfil\break
\noindent University of Wisconsin \hfil\break
\noindent 480 Lincoln Drive \hfil\break
\noindent Madison, WI 53706-1388 USA \hfil\break
\noindent email: {\tt terwilli@math.wisc.edu }\hfil\break
\bigskip
 
\noindent  Arjana \v{Z}itnik \hfil\break
\noindent  Faculty of Mathematics and Physics \hfil\break
\noindent  University of Ljubljana, and IMFM \hfil\break
\noindent Jadranska 19, 1000 Ljubljana, Slovenia \hfil\break
\noindent email: {\tt Arjana.Zitnik@fmf.uni-lj.si} \hfil\break

 \end{document}